\documentclass{amsart}
\usepackage{amssymb,graphicx}

\newtheorem{theorem}{Theorem}[section]
\newtheorem{lemma}[theorem]{Lemma}
\newtheorem{corollary}[theorem]{Corollary}
\newtheorem{proposition}[theorem]{Proposition}
\newtheorem{question}{Question}

\theoremstyle{definition}
\newtheorem{definition}[theorem]{Definition}
\theoremstyle{remark}
\newtheorem{remark}[theorem]{Remark}

\numberwithin{equation}{section}

\begin{document}

\title[Embedded Minimal Surfaces with Free Boundary]{A general existence theorem for embedded minimal surfaces with free boundary}

\author[Martin Li]{Martin Li}
\address{Mathematics Department, University of British Columbia, 1984 Mathematics Road, Vancouver, BC V6T 1Z2, Canada}
\email{martinli@math.ubc.ca}



\begin{abstract}
In this paper, we prove a general existence theorem for properly embedded minimal surfaces with free boundary in any compact Riemannian 3-manifold $M$ with boundary $\partial M$. These minimal surfaces are either disjoint from $\partial M$ or meet $\partial M$ orthogonally. The main feature of our result is that there is no assumptions on the curvature of $M$ or convexity of $\partial M$. We prove the boundary regularity of the minimal surfaces at their free boundaries. Furthermore, we define a topological invariant, the \emph{filling genus}, for compact 3-manifolds with boundary and show that we can bound the genus of the minimal surface constructed above in terms of the filling genus of the ambient manifold $M$. Our proof employs a variant of the min-max construction used by Colding and De Lellis on closed embedded minimal surfaces, which were first developed by Almgren and Pitts. 
\end{abstract}
\maketitle   



\tableofcontents




\section{Background and motivation}

In this paper, we study a general existence problem for embedded minimal surfaces with free boundary. All manifolds (with or without boundary) are assumed to be smooth up to the boundary unless otherwise stated.

\begin{question}
Given a compact Riemannian three-manifold $(M^3,g)$ with boundary $\partial M$, does there exist a \emph{properly embedded} minimal surface $\Sigma \subset M$ with boundary $\partial \Sigma \subset \partial M$ such that $\Sigma$ meets $\partial M$ orthogonally along $\partial \Sigma$?
\end{question}

Recall that a surface $\Sigma \subset M$ (with or without boundary) is said to be \emph{properly embedded} if the inclusion map $i:\Sigma \to M$ is a proper  embedding (i.e. a one-to-one immersion). In other words, we require
\begin{itemize}
\item[(i)] $\Sigma \cap \partial M=\partial \Sigma$; and
\item[(ii)] $\Sigma$ is transversal to $\partial M$ at any point on $\partial \Sigma$.
\end{itemize}
Note that if $\partial \Sigma = \emptyset$, this is equivalent to saying that $\Sigma$ is contained in the interior of $M$. 

The orthogonality condition along the boundary $\partial \Sigma$ is a natural condition arising variationally. Let $\Sigma \subset M$ be a properly embedded surface with boundary $\partial \Sigma \subset \partial M$. Suppose we have a smooth family of properly embedded surfaces $\{\Sigma_t\}_{t \in (-\epsilon,\epsilon)}$ in $M$ for some $\epsilon>0$ with $\Sigma_0=\Sigma$. If we calculate how the area of this family of surfaces, denoted by Area($\Sigma_t$), changes with respect to $t$ at $t=0$, a standard computation (for example, see \cite{Schoen06} and  \cite{Simon83}) gives the first variation formula
\begin{equation}
\left. \frac{d}{dt} \right|_{t=0} \text{Area}(\Sigma_t)=-\int_\Sigma \langle H,X \rangle \; da +\int_{\partial \Sigma} \langle X,\nu \rangle ds,
\end{equation}
where $\langle \; , \; \rangle$ is the metric of the ambient manifold $M$, $\nu$ is the outer conormal vector of $\partial \Sigma$ in $\Sigma$ (i.e. the outward unit normal of $\partial \Sigma$ tangent to $\Sigma$), $H$ is the mean curvature vector of $\Sigma$ in $M$ (with the sign convention that $H$ points inward for the unit sphere in $\mathbb{R}^3$) and $X$ is the variation field associated with the one-parameter family $\{\Sigma_t\}$ (i.e. $X(x)=\left. \frac{\partial}{\partial t} \right|_{t=0} F_t(x)$ where $F=F_t(x)=F(t,x):(-\epsilon,\epsilon) \times \Sigma \to M$ is a one-parameter family of proper embeddings such that $F_t(\Sigma)=\Sigma_t$). Note that since $\partial \Sigma_t \subset \partial M$ for all $t$, the variation field $X$ is tangential to $\partial M$ along $\partial \Sigma$. From (1.1),  $\Sigma$ is a critical point to the variational problem if and only if $\Sigma$ is minimal (i.e. $H \equiv 0$) and $\Sigma$ meets $\partial M$ orthogonally along $\partial \Sigma$ (i.e. $\nu \perp \partial M$). In this case, we say that $\Sigma$ is a \emph{free boundary solution}.

The study of free boundary problems for minimal surfaces was initiated by R. Courant in \cite{Courant40} and H. Lewy in \cite{Lewy51} in the 1940's. In the next few decades, these minimal surfaces with free boundaries were studied extensively by K. Goldhorn, S. Hildebrandt, W. J\"{a}ger and J. Nitsche (see, for example, \cite{Hildebrandt69} \cite{Nitsche69} \cite{Nitsche70} \cite{Goldhorn-Hildebrandt70} \cite{Jager70}  \cite{Hildebrandt-Nitsche79}), and later by J. Taylor \cite{Taylor77}, W. Meeks and S.T. Yau \cite{Meeks-Yau80}, and R.Ye \cite{Ye91}, among many others. In their approaches, they applied the direct method in the calculus of variations to the Dirichlet energy functional, and established the existence of minimizers with boundary lying on a given supporting surface. Boundary regularity results were obtained in a variety of settings. Interested readers are encouraged to consult the recent treatise \cite{Ulrich-Hildebrandt-Sauvigny10} \cite{Ulrich-Hildebrandt-Tromba10} on boundary value problems of minimal surfaces.

However, this approach cannot be used directly to answer \textbf{Question 1}. The main reason is that these existence theorems only produce area minimizers, they do not yield the existence of stationary minimal surfaces which are not area minimizing. It is not hard to see that for certain supporting surfaces, there are no non-trivial minimizers. For example, it does not furnish the existence of non-trivial stationary minimal surfaces within the region bounded by a closed convex surface in $\mathbb{R}^3$. Therefore, the direct method does not work unless the ambient space has some non-trivial topology.

To deal with the difficulty above, we need to construct unstable critical points to the area (or energy) functional. Along this direction, M. Struwe \cite{Struwe84} and A. Fraser \cite{Fraser00} applied the celebrated perturbed $\alpha$-energy of Sacks and Uhlenbeck \cite{Sacks-Uhlenbeck81} to the free boundary problem for minimal disks. They were able to produce non-trivial free boundary solutions with controlled Morse index using Ljusternik-Schnirelman theory. For instance, A. Fraser proved that (see Theorem 1 in \cite{Fraser00}) if $M$ is a smooth compact domain of a complete, homogeneously regular Riemannian three-manifold $\tilde{M}$, and the relative homotopy group $\pi_k(\tilde{M},\partial M) \neq 0$ for some $k \geq 2$, then either there exists a non-constant minimal disk $D$ in $\tilde{M}$ meeting $\partial M$ orthogonally along $\partial D$, or there exists a non-constant minimal two-sphere in $\tilde{M}$. Moreover, the Morse index of such a minimal surface is at most $k-2$. 

Despite these positive results in the existence theory, it is yet not enough to settle \textbf{Question 1} in complete generality due to the following reasons. First,  the minimal disk or sphere may not be embedded (in fact it may not even be immersed). Second, and more importantly, the free boundary solution may not be contained in the compact region $M$. The construction does not prevent the minimal surface from penetrating $\partial M$ in an unphysical way, and it is impossible to make it contained in $M$ without imposing (mean) convexity assumptions on $\partial M$ (see  \cite{Fraser02} \cite{Meeks-Yau80} \cite{Meeks-Yau82a}). This approach does not work in the non-convex situation because one do not expect $M$ to contain a disk-type free boundary solution. For example, the annular region  $M=B_2 \setminus B_1 \subset \mathbb{R}^3$, where $B_r$ denotes the ball of radius $r>0$ centered at the origin, does not contain any properly embedded minimal disk $D$ with free boundary on $\partial M$. If such a minimal disk $D$ were to exist, the boundary circle $\partial D$ lies on one of the boundary spheres $\partial B_1$ or $\partial B_2$. It cannot lie on $\partial B_1$ by the convex hull property (see Proposition 1.9 in \cite{Colding-Minicozzi11}). Therefore, $\partial D \subset \partial B_2$, and hence $D$ is a free boundary solution to the ball $B_2$. A uniqueness theorem of J. Nitsche \cite{Nitsche85} implies that $D$ must be a totally geodesic equatorial disk, which has non-empty intersection with the unit ball $B_1$, contradicting the hypothesis that $D \subset B_2 \setminus B_1$. On the other hand, the restriction of the equatorial disk to $B_2 \setminus B_1$ gives an example of a free boundary solution which is topologically an annulus (genus zero with two boundary components).

There is a completely different approach, using geometric measure theory, which has been very successful in constructing embedded minimal surfaces. By minimizing among all embedded disks with prescribed boundary, F. Almgren and L. Simon \cite{Almgren-Simon79} showed that any \emph{extremal curve} $\Gamma$ in $\mathbb{R}^3$, i.e. $\Gamma$ is contained in the boundary of a strictly convex domain $A \subset \mathbb{R}^3$, bounds an embedded minimal disk contained in $A$. Based on Almgren and Simon's paper, W. Meeks, L. Simon and S.T. Yau \cite{Meeks-Simon-Yau82} proved, under suitable hypothesis, the existence of an embedded minimal surface which minimizes area in its isotopy class in a Riemannian 3-manifold. This result has profound applications in 3-manifold topology.  

In a remarkable work of F. Almgren \cite{Almgren65} and J. Pitts \cite{Pitts81}, a minimax argument was used to prove that any closed Riemannian 3-manifold contains a smooth, embedded, closed minimal surface. The proof of interior regularity for such minimal surfaces was based on Schoen, Simon and Yau's curvature estimates for stable minimal surfaces \cite{Schoen-Simon-Yau75}. Using a clever curve lifting argument, L. Simon and F. Smith \cite{Smith82} were able to control the topology of the minimal surface. As a corollary, they proved that there exists an embedded minimal two-sphere in the three-sphere with arbitrary Riemannian metric. 

Adapting these ideas to the free boundary case, M. Gr\"{u}ter and J. Jost \cite{Gruter-Jost86a} proved the existence of an embedded minimal disk as a free boundary solution in any bounded, strictly convex domain in $\mathbb{R}^3$. In another paper \cite{Jost86a}, J. Jost claimed similar results hold under weaker convexity assumptions on the boundary. Unfortunately, the author was unable to verify some of the arguments in the paper. On the other hand, a partially free boundary problem was also studied by J. Jost in \cite{Jost86}. All of these results depend on certain curvature assumptions on the boundary of the ambient manifold.

In this paper, we settle \textbf{Question 1} in complete generality, without any curvature assumptions on $M$ or $\partial M$.

\begin{theorem}
For any compact Riemannian three-manifold $(M^3,g)$ with boundary $\partial M$, at least one of the following holds: 
\begin{itemize}
\item[(i)] there exists a properly embedded minimal surface $\Sigma \subset M$ with boundary $\partial \Sigma \subset \partial M$ such that $\Sigma$ meets $\partial M$ orthogonally along $\partial \Sigma$; or 
\item[(ii)] there is a closed, embedded minimal surface $\Sigma$ contained in the interior of $M$. 
\end{itemize}
Moreover, if $M$ is assumed to be smooth up to the boundary, then the minimal surface $\Sigma$ is smooth (up to the boundary in case (i)).
\end{theorem}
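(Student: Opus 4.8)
The plan is to run a min-max (Almgren-Pitts–type) construction adapted to the free boundary setting, following the scheme of Colding–De Lellis for interior regularity and supplementing it with a new boundary regularity analysis at the free boundary. I would work in the space of integral varifolds (or, more concretely, the space of "generalized surfaces" obtained as limits of smooth properly embedded surfaces in $M$ with boundary on $\partial M$), and consider sweepouts $\{\Sigma_t\}_{t\in[0,1]}$ of $M$ by such surfaces. The relevant min-max quantity is $W = \inf_{\{\Sigma_t\}} \sup_t \operatorname{Area}(\Sigma_t)$, where the infimum is over an appropriate family of sweepouts that is closed under a suitable notion of homotopy. The first step is to set up this min-max framework carefully: define the admissible class of sweepouts so that it is non-empty and homotopically non-trivial (so that $W>0$), and establish the existence of a min-max sequence $\Sigma^j_{t_j}$ whose areas converge to $W$ and which, after passing to a subsequence, converges as varifolds to a stationary integral varifold $V$ in $M$ — stationary with respect to vector fields that are tangent to $\partial M$ along $\partial M$, which is exactly the free boundary condition in the variational sense encoded by (1.1).

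The second, and most substantial, step is regularity. Away from $\partial M$, one invokes the Colding–De Lellis version of the Pitts regularity theorem: using the almost minimizing property of the min-max sequence on small interior annuli together with the Schoen–Simon(-Yau) curvature estimates for stable minimal surfaces, one concludes that $\operatorname{spt}\|V\|$ is, in the interior, a smooth embedded minimal surface (with some integer multiplicity). The genuinely new part is establishing regularity of $V$ up to and along $\partial M$. Here I would prove a boundary version of the almost-minimizing/replacement machinery: on small half-balls centered at points of $\partial M$, the min-max sequence is almost minimizing among competitors that still have free boundary on $\partial M$, and one obtains stable "replacements" which, by a boundary curvature estimate for stable free boundary minimal surfaces (a Schoen–Simon-type estimate up to the boundary), are smooth. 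Patching the interior and boundary replacements via a unique-continuation/tangent-cone argument then forces $\operatorname{spt}\|V\|$ to be a smooth properly embedded minimal surface meeting $\partial M$ orthogonally. The main technical obstacle — and the heart of the paper — is precisely this boundary regularity: one must rule out pathological boundary behavior (e.g. the varifold having a piece of $\partial M$ in its support, or the surface "folding" along $\partial M$), control tangent cones at boundary points, and establish the curvature estimate for stable minimal surfaces with free boundary without any convexity assumption on $\partial M$.

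Finally, there is the dichotomy (i) versus (ii). The min-max limit $V$ is a stationary (free boundary) integral varifold, and its regular support is a smooth embedded minimal surface $\Sigma$; a priori some connected components of $\Sigma$ may be closed surfaces sitting in the interior of $M$ while others genuinely meet $\partial M$ orthogonally along a nonempty free boundary. Since $W>0$, $\Sigma$ is nonempty, so at least one component is one of these two types, giving alternative (i) or (ii). To make this dichotomy sharp one should check that if $\partial\Sigma=\emptyset$ for \emph{every} component — i.e. the whole min-max surface is closed and interior — we are in case (ii), and otherwise some component has nonempty free boundary and we are in case (i); this is essentially bookkeeping once the regularity theorem is in place. (The reason one cannot always conclude (i) is, as the annular example $B_2\setminus B_1$ in the introduction shows for disks, a manifold need not contain a free boundary solution of the topological type naturally produced by a given sweepout, and more to the point the min-max limit with a degenerate choice of geometry could conceivably be a closed interior minimal surface; keeping both alternatives is the honest statement.) I expect steps one and three to be largely routine adaptations of known arguments, with all the real difficulty concentrated in the boundary regularity in step two.
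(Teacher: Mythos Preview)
Your overall architecture is right, but there is a genuine gap at exactly the point you flag as ``the heart of the paper,'' and it is not merely a matter of proving harder estimates. You propose to work with competitors and deformations that keep the surfaces inside $M$ with boundary on $\partial M$ --- i.e.\ vector fields tangent to $\partial M$ and an almost-minimizing property relative to such tangential isotopies. The paper's Section~2 example shows this is insufficient: the equatorial disk in $M=B_1(0)\setminus B_{1/4}(0,0,\tfrac14)$ is smooth, minimal, freely stationary, and $\epsilon$-almost minimizing with respect to \emph{all} isotopies preserving $M$, yet it is \emph{not} properly embedded (an interior point of the disk lies on $\partial M$). So a varifold limit that is stationary and almost minimizing for tangential variations can still fail to be a free boundary solution, and no amount of boundary curvature estimates or tangent-cone analysis within that framework will repair this, because the bad object is already perfectly smooth.

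The missing idea is the paper's notion of \emph{outer} almost minimizing. One embeds $M$ in a closed manifold $\tilde M$, sweeps out by \emph{closed} surfaces in $\tilde M$ while counting only area in $M$, and allows deformations by isotopies in the larger class $\mathfrak{Is}_{\text{out}}$ that may push points \emph{out} of $M$ (but not in). The equatorial disk above fails outer almost minimizing precisely because an outward isotopy can push it through the concave inner boundary and strictly decrease its area in $M$. It is this enlarged class of competitors, not a sharper regularity theorem, that forces the limit to be properly embedded. This choice also creates a secondary difficulty you did not anticipate: $\mathcal{H}^2(\Sigma_t\cap M)$ need not be continuous in $t$, so one needs a perturbation lemma (Lemma~4.9) to pass to continuous sweepouts before the Almgren combinatorial argument can run. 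Your steps one and three are indeed routine once this is in place; the substance is in introducing $\mathfrak{Is}_{\text{out}}$ and redoing the replacement/minimization theory (Section~7) for that class.
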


The proof of Theorem 1.1 employs a minimax construction similar to the one by F. Almgren \cite{Almgren65} and J. Pitts \cite{Pitts81}. Recently, T. Colding and C. De Lellis \cite{Colding-DeLellis03} wrote a detailed account of the minimax construction, and they were able to simplify some of the proofs significantly. In another recent paper \cite{DeLellis-Pellandini10}, C. De Lellis and F. Pellandini obtained a genus bound for minimal surfaces constructed by the minimax method in \cite{Colding-DeLellis03}. (Their bound is slightly weaker than the one conjectured by J. Pitts and H. Rubinstein \cite{Pitts-Rubinstein86}.) We observe that their result can be used to control the topology of the free boundary solution constructed in Theorem 1.1.

\begin{theorem}
Let $(M^3,g)$ be a Riemannian 3-manifold with boundary. Suppose the filling genus (see Definition 9.1) of $M$ is equal to $h$. Then, the minimal surface $\Sigma$ in Theorem 1.1 can be chosen such that 
\begin{itemize}
\item[(i)] if $\Sigma$ is orientable, then genus$(\Sigma) \leq h$;
\item[(ii)] if $\Sigma$ is non-orientable, then genus$(\Sigma) \leq 2h+1$.
\end{itemize}
\end{theorem}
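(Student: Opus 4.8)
The plan is to leverage the genus bound of De Lellis--Pellandini \cite{DeLellis-Pellandini10} for the closed min-max construction of Colding--De Lellis, together with the doubling construction that relates the free boundary problem on $M$ to a closed problem on the double $\widehat{M} = M \cup_{\partial M} M$. The key observation is that the \emph{filling genus} (Definition 9.1) should be set up precisely so that it controls the genus of the sweepouts one is allowed to use, and hence, via the min-max machinery of the previous sections, the genus of the limiting minimal surface. So the first step is to unwind Definition 9.1: if $M$ has filling genus $h$, there is a sweepout of $M$ by surfaces (with boundary on $\partial M$) whose genus is at most $h$ — more precisely a one-parameter family $\{\Sigma_t\}$ realizing the relevant min-max class in which every slice has genus $\leq h$. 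One takes such a family as the competitor in the min-max construction of Theorem 1.1.

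Next I would run the min-max argument of sections 4--8 starting from this bounded-genus sweepout. The output is either a free boundary minimal surface $\Sigma \subset M$ (case (i)) or a closed minimal surface in the interior (case (ii)), produced as a varifold limit of (possibly modified) slices. The heart of the matter is the genus control under the min-max limit. Here I would invoke the De Lellis--Pellandini genus bound, but adapted to the free boundary setting: after doubling, a free boundary surface $\Sigma$ in $M$ meeting $\partial M$ orthogonally becomes a smooth closed surface $\widehat{\Sigma} = \Sigma \cup_{\partial \Sigma} \Sigma$ in $\widehat{M}$ (the orthogonality is exactly what makes the doubled surface $C^1$, hence by elliptic regularity smooth and minimal in the natural doubled metric, which is only Lipschitz but can be handled or smoothed). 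The sweepout of $M$ by genus $\leq h$ slices doubles to a sweepout of $\widehat{M}$ by slices of genus $\leq 2h + (\text{number of boundary components} - 1)$ or so; one must be careful about how doubling affects genus. Applying the closed genus bound on $\widehat{M}$ and then relating $\widehat{\Sigma}$ back to $\Sigma$ via $\operatorname{genus}(\widehat{\Sigma}) = 2\operatorname{genus}(\Sigma) + (\#\partial\Sigma - 1)$ in the orientable case gives the stated bound. The non-orientable case (ii) requires passing to the orientable double cover of $\Sigma$ and tracking how the genus of a non-orientable surface relates to that of its orientation double cover, which accounts for the extra factor and the $+1$.

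The main obstacle I expect is that the genus bound of \cite{DeLellis-Pellandini10} is proved for the \emph{interior} min-max construction, and transplanting it to the free boundary setting is not purely formal: one needs to check that the isotopy/surgery steps used to go from the min-max sequence of slices to the minimal varifold — including the outward-pushing modifications near $\partial M$ introduced in sections 5--7 — do not increase genus, and that the local picture near a point of $\partial \Sigma$ (where the ``outer almost minimizing'' replacement is a partially free boundary minimizer, section 7) is genus-controlled in the same way the interior replacements are. Concretely, one must verify that the boundary regularity analysis of section 8 produces, at each boundary point, a sheeted structure whose doubling contributes genus additively just as in the interior case, so that the curve-lifting/surgery argument of De Lellis--Pellandini goes through on $\widehat{M}$ verbatim. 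A secondary technical point is the non-smoothness of the doubled metric across $\partial M$; since $\partial M$ is not assumed totally geodesic, $\widehat{g}$ is merely Lipschitz, so one either works directly with the reflected problem (exploiting that a free boundary minimal surface reflects to a stationary varifold for $\widehat{g}$ and invokes the version of the genus bound that only needs the varifold structure and the local sheeting) or approximates $\widehat{g}$ by smooth metrics and passes to the limit, checking the genus bound is preserved.
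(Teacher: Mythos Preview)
Your proposal rests on a misreading of both the filling genus and the structure of the min-max construction in this paper, and the doubling route you sketch is an unnecessary detour.

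First, the filling genus (Definition 9.3) is \emph{not} defined via sweepouts of $M$ by bordered surfaces. It is the smallest $h$ such that $M$ embeds isometrically as a compact domain in a \emph{closed} orientable 3-manifold $\tilde{M}$ of Heegaard genus $h$. This matters because the min-max machinery of sections 3--8 is already set up with $M \subset \tilde{M}$ and with sweepouts consisting of \emph{closed} surfaces $\Sigma_t \subset \tilde{M}$ (Definition 4.1); the free boundary enters only through the functional $\mathcal{H}^2(\Sigma_t \cap M)$ and the class of outward isotopies. So there is no need to double $M$ or to worry about a Lipschitz metric on $\widehat{M}$: the ambient closed manifold is already there, with a smooth metric.

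Given that, the paper's argument is direct. A Heegaard splitting of $\tilde{M}$ of genus $h$ yields a sweepout of $\tilde{M}$ by closed orientable surfaces of genus at most $h$; run the min-max of sections 4--8 on its saturation. The only adaptation needed to the De Lellis--Pellandini genus bound is the observation that for a compact surface $\Gamma$ with boundary, the genus is read off from the image of $H_1(\Gamma;\mathbb{Z}) \to H_1(\Gamma,\partial\Gamma;\mathbb{Z})$, after which their curve-lifting lemma goes through verbatim. This gives (Theorem 9.1)
\[
\sum_{\Gamma^i \in \mathcal{O}} \mathfrak{g}(\Gamma^i) + \tfrac{1}{2}\sum_{\Gamma^i \in \mathcal{N}} (\mathfrak{g}(\Gamma^i)-1) \;\leq\; \liminf_{j}\liminf_{\tau \to t_j} \mathfrak{g}(\Sigma^j_\tau) \;\leq\; h,
\]
from which (i) and (ii) follow immediately for any single component.

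Your doubling strategy, by contrast, would require you to relate the min-max on $M$ (with the area-in-$M$ functional and outward isotopies) to the standard Colding--De Lellis min-max on $\widehat{M}$; these are genuinely different variational problems, and you have not indicated how to pass between them. The Lipschitz-metric issue you flag is real and would need work, and your genus bookkeeping under doubling (the $2h + (\#\partial - 1)$ formula) would have to be made precise and shown compatible with the min-max limit. None of this is needed once you use the ambient $\tilde{M}$ already built into the construction.
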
 

It is clear from the definition that the filling genus of any compact domain in $\mathbb{R}^3$ is zero. As a consequence, we have the following corollary (note that there is no closed minimal surface in $\mathbb{R}^3$).

\begin{corollary}
For any compact domain $M \subset \mathbb{R}^3$, there exists a properly embedded minimal surface $\Sigma \subset M$ with non-empty free boundary $\partial \Sigma \subset \partial M$ such that either
\begin{itemize}
\item[(i)] $\Sigma$ is orientable with genus zero (i.e. a disk with holes); or
\item[(ii)] $\Sigma$ is a non-orientable surface with genus one (i.e. a M\"{o}bius strip with holes). 
\end{itemize}
In case $M$ is diffeomorphic to the unit 3-ball, case (ii) does not happen and $\Sigma$ must be orientable.
\end{corollary}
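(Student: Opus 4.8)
Having established Theorem 1.2, the plan is to deduce Corollary 1.3 by controlling the filling genus of compact domains in $\mathbb{R}^3$. The key observation is that any compact domain $M \subset \mathbb{R}^3$ has filling genus $h=0$. Indeed, $M$ embeds in $\mathbb{R}^3$, which in turn embeds in $S^3$; one then fills $M$ inside $S^3$ by a handlebody of genus zero, i.e., by standard (possibly nested) round balls. More precisely, one writes $\mathbb{R}^3 \setminus \mathrm{int}(M)$ as an exhaustion by a region that deformation retracts onto its complement, so that $M$ sits inside $S^3$ with complement a disjoint union of genus-zero handlebodies; unwinding Definition 9.1, this forces the filling genus of $M$ to vanish. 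I would spell this out in a short lemma: \emph{any compact domain in $\mathbb{R}^3$, or more generally any compact domain in a homology $3$-sphere all of whose complementary regions are handlebodies of genus zero, has filling genus zero.}

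With $h=0$ in hand, Theorem 1.2 immediately gives that the minimal surface $\Sigma$ from Theorem 1.1 satisfies $\mathrm{genus}(\Sigma) \le 0$ if $\Sigma$ is orientable (hence $\Sigma$ is a disk with holes, i.e. a planar domain), and $\mathrm{genus}(\Sigma) \le 1$ if $\Sigma$ is non-orientable (hence $\Sigma$ is a M\"obius strip with holes, since a non-orientable surface with boundary of genus $1$ is a M\"obius band with handles/holes, and genus $0$ is excluded as non-orientable surfaces have genus $\ge 1$). The remaining point is to rule out alternative (ii) of Theorem 1.1: a compact domain $M \subset \mathbb{R}^3$ contains no closed embedded minimal surface in its interior, since $\mathbb{R}^3$ itself contains no closed minimal surface (by the maximum principle applied to the largest coordinate of a point of the surface, or by the convex hull property). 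Hence alternative (i) must hold, and $\partial \Sigma \ne \emptyset$; so $\Sigma$ is as claimed.

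Finally, for the case where $M$ is diffeomorphic to the unit $3$-ball, I would argue that $\Sigma$ must be orientable, which rules out (ii). Here the point is a standard separation argument: a properly embedded surface $\Sigma$ in the ball $B^3$ with $\partial \Sigma \subset \partial B^3$ is necessarily orientable, because $H_1(B^3,\partial B^3;\mathbb{Z}/2) = 0$ (equivalently $H_2(B^3;\mathbb{Z}/2)=0$), so the mod-$2$ fundamental class of $(\Sigma,\partial\Sigma)$ bounds, forcing $\Sigma$ to be two-sided and hence orientable since $B^3$ is orientable. One can equally see this by noting that $\Sigma$ separates $B^3$ into two pieces and taking the normal pointing into one of them. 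The main obstacle in the whole argument is not deep but bookkeeping: one must carefully verify from Definition 9.1 that embeddability of $M$ in $S^3$ with handlebody complement of genus zero really yields filling genus zero, i.e. that the competitor Heegaard-type foliations/sweepouts used to define the filling genus can be taken to have genus zero. Everything else — the non-existence of closed minimal surfaces in $\mathbb{R}^3$ and the orientability in the ball — is classical.
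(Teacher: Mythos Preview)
Your approach is essentially the same as the paper's: deduce filling genus zero for domains in $\mathbb{R}^3$, apply Theorem~1.2, and rule out the closed case because $\mathbb{R}^3$ admits no closed minimal surfaces. Two remarks are in order.

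First, your discussion of the filling genus is overcomplicated and suggests a slight misreading of Definition~9.1. That definition asks only for the smallest Heegaard genus of a closed orientable $3$-manifold into which $M$ embeds; nothing is required of the complement of $M$. Since $M \subset \mathbb{R}^3 \subset S^3$ and $S^3$ has Heegaard genus zero, the filling genus of $M$ is zero immediately. Your talk of ``handlebody complements'' and ``competitor Heegaard-type foliations'' is unnecessary and risks confusing the reader about what actually needs to be checked.

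Second, your homological argument for orientability in the ball case is correct and is a detail the paper does not spell out. The observation that a properly embedded surface in $B^3$ with boundary on $\partial B^3$ must separate (via $H_2(B^3,\partial B^3;\mathbb{Z}/2)\cong H_1(B^3;\mathbb{Z}/2)=0$, or equivalently Alexander duality), hence is two-sided in an orientable ambient manifold and therefore orientable, is exactly the right justification.
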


The outline of this paper is as follows. In section 2, we describe an example which illustrates why the proof in \cite{Colding-DeLellis03} does not directly generalize to cover the free boundary problem. This example also serves as a model case which motivates many of the technical arguments in this paper. Section 3 to 8 comprise of the proof of our main result, Theorem 1.1. In section 3, we give some definitions and preliminary results. Moreover, we define two important concepts: \emph{freely stationary} and \emph{outer almost minimizing property}, which will play important roles in this paper. In section 4, we describe the min-max construction and give an outline of the proof of Theorem 1.1. In section 5 and 6, we establish the existence of varifolds which are both freely stationary and outer almost minimizing. In section 7, we study a minimization problem with partially free boundary, which is then used in section 8 to prove the boundary regularity of freely stationary varifolds satisfying the outer almost minimizing property. In section 9, the genus bound in Theorem 1.2 is proved using the result in \cite{DeLellis-Pellandini10}.

\textit{Acknowledgement.} The results in this paper form part of the author's Doctoral Dissertation \cite{Li11} at Stanford University. The author would like to express his sincere gratitude to his advisor Professor Richard Schoen for suggesting this interesting problem and providing lots of useful advice and support throughout the progress of this work. The author would also like to thank Professor Brian White for many helpful comments and discussions. The author is indebted to Professor Leon Simon, from whom he learned a lot of geometric measure theory used in this paper.


\section{An example}

In this section, we discuss the main difficulties in the proof of Theorem 1.1. We also give an example which illustrates the need for some technical arguments in this paper. 

Our proof of Theorem 1.1 is a modified version of the minimax construction described in \cite{Colding-DeLellis03}. Let us first briefly recall the minimax construction for closed (i.e. compact without boundary) manifolds. Consider the standard 3-sphere $\mathbb{S}^3$ (with the round metric, for example), and a continuous sweepout $\Gamma=\{\Sigma_t\}_{t \in [0,1]}$ by 2-spheres which degenerate to a point at $t=0$ and $1$. Our goal is to minimize the area of the maximal slice by deforming the sweepout using ambient isotopies (which could depend continuously on $t$). Suppose a minimizing sweepout exists, we expect a maximal slice in this sweepout to be a minimal surface (see Fig. 2 in \cite{Colding-DeLellis03}). In practice, the minimum may not be achieved by any sweepout. Therefore, we have to take a minimizing sequence $\{\Sigma^n_t\}_n$ of sweepouts, and a min-max sequence of surfaces $\{\Sigma^n_{t_n}\}_n$ which, after passing to a subsequence, converges in some weak sense (as varifolds) to an embedded minimal surface (possibly with multiplicity). There are two important points in this construction. First, we need to ensure that almost maximal slices are almost stationary (see Fig. 4 in \cite{Colding-DeLellis03}). This can be achieved by a ``tightening'' process (section 4 of \cite{Colding-DeLellis03}). Second, we want to choose our min-max sequence carefully so that it satisfies the \emph{almost minimizing property} which enables us to prove regularity for the limiting surface (sections 5-7 of \cite{Colding-DeLellis03}).

A natural attempt to generalize the above method to the free boundary problem is to sweep out the compact manifold $M$ by surfaces with boundary lying on $\partial M$, and we use isotopies which preserve $M$, but not necessarily pointwise fixing points on $\partial M$, to deform our sweepouts. Then, we carry out the same minimax construction as before and hope that a min-max sequence would converge to an embedded minimal surface with free boundary on $\partial M$. In fact, the limit would be stationary with respect to isotopies preserving $M$. Unfortunately, this is insufficient to conclude that the limit is a smooth free boundary solution. The main difficulty is that it may not be properly embedded if we do not have any convexity assumptions on the boundary $\partial M$. Unlike many constructions of minimal surfaces, we do not have barriers to prevent the interior of our minimal surface from touching the boundary. Let us further illustrate this point by the following example.

Let $B_r(a) \subset \mathbb{R}^3$ be the closed Euclidean 3-ball of radius $r$ centered at $a$. Suppose $M=B_1(0,0,0) \setminus B_{\frac{1}{4}}(0,0,\frac{1}{4})$. Consider the equatorial disk $\Sigma=M \cap \{x_3=0\}$, it is minimal and has free boundary on the outer boundary. However, this is not a legitimate free boundary solution because it is not properly embedded. The origin, which lies in the interior of the equatorial disk, is a point on the inner boundary of $M$. This happens since the inner boundary is not mean convex with respect to the inner unit normal (with respect to $M$). Nevertheless, the equatorial disk is a critical point of the area functional with respect to all variations preserving $M$. Not only is it stationary, but it is also \emph{$\epsilon$-almost minimizing} (see Definition 3.2 in \cite{Colding-DeLellis03}) on any sufficiently small ball for all $\epsilon >0$ with respect to these variations. This example shows that a smooth minimal surface which is stationary and almost minimizing with respect to the isotopies preserving $M$ may fail to be properly embedded, hence needs not be a free boundary solution (Figure 1). In the next section, we will discuss how to get around this problem.

\begin{figure}
\centering
\includegraphics[height=5cm]{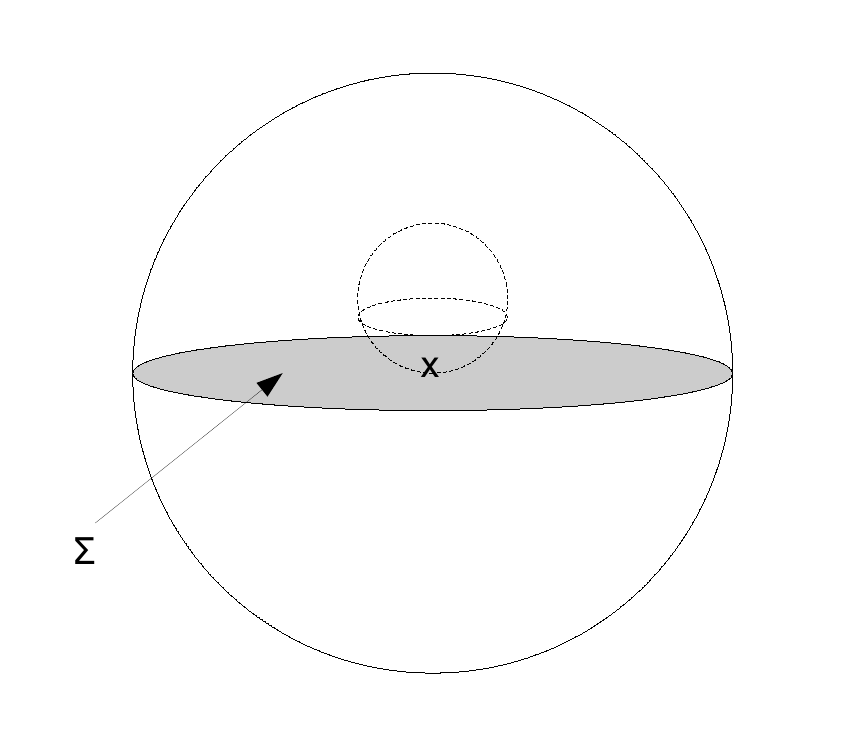}
\caption{An example of a smooth minimal surface which is stationary and almost minimizing with respect to isotopies preserving M, but not properly embedded.}
\end{figure}


\section{Definitions and preliminaries}

Let $(M^3,g)$ be a compact Riemannian 3-manifold with non-empty boundary $\partial M \neq \phi$. Suppose $M$ is connected (but $\partial M$ is not necessary connected, i.e. $M$ could have multiple boundary components). Without loss of generality, we assume that  $M$ is isometrically embedded as a compact subset of a closed Riemannian 3-manifold $\tilde{M}^3$. (Note that such an isometric embedding always exists. For example, we can smoothly extend $M$ with the metric across the boundary $\partial M$ to get a collar neighborhood which can be made cylindrical near the boundary by a cutoff function, then we take another copy of this collar neighborhood and glue the two together along the cylindrical necks.) All surfaces, with or without boundary, are smoothly embedded in $\tilde{M}$ unless otherwise stated. We will use int$(M)=M \setminus \partial M$ to denote the interior of $M$.

\subsection{Isotopies and vector fields}

We now describe the class of ambient isotopies in $\tilde{M}$ used in deforming our surfaces. An \emph{isotopy on} $\tilde{M}$ is a smooth one-parameter family  $\{\varphi_s\}_{s \in [0,1]}$ of diffeomorphisms of $\tilde{M}$, where $\varphi_0$ is the identity map of $\tilde{M}$. (The smoothness assumption here means that the map $\varphi(s,x)=\varphi_s(x):[0,1] \times \tilde{M} \to \tilde{M}$ is smooth.) Let $\mathfrak{Is}$ denote the space of all isotopies on $\tilde{M}$. Moreover, we say that an isotopy $\{\varphi_s\} \in \mathfrak{Is}$ is \emph{supported in an open set} $U \subset \tilde{M}$ if $\varphi_s(x)=x$ for every $s \in [0,1]$ and $x \in \tilde{M} \setminus U$. Define
\begin{equation*}
\mathfrak{Is}_{\text{out}}= \{\{\varphi_s\} \in \mathfrak{Is} : M \subset \varphi_s(M) \text{ for all }s \in [0,1]\}
\end{equation*}
to be the \emph{isotopies in $\tilde{M}$ which can move points out of the compact set $M \subset \tilde{M}$, but not into $M$}, and $\mathfrak{Is}_{\text{out}}(U)$ to be those in $\mathfrak{Is}_{\text{out}}$ which are supported in some open set $U \subset \tilde{M}$. Furthermore, we are also interested in situations where the compact set $M$ is preserved by the isotopy, Similarly, we define
\begin{equation*} 
\mathfrak{Is}_{\text{tan}}= \{\{\varphi_s\} \in \mathfrak{Is} : M=\varphi_s(M) \text{ for all }s \in [0,1]\}.
\end{equation*}
to be the isotopies preserving the compact set $M$, and $\mathfrak{Is}_{\text{tan}}(U)$ to be those in $\mathfrak{Is}_{\text{tan}}$ supported in some open set $U \subset \tilde{M}$. Notice that $\mathfrak{Is}_{\text{tan}}(U) \subset \mathfrak{Is}_{\text{out}}(U)$ for any open set $U \subset \tilde{M}$.

One way to generate isotopies is to consider the flow of a vector field. Let $\chi$ be the vector space of smooth vector fields on $\tilde{M}$. We define two subspaces $\chi_{\text{out}}$ and $\chi_{\text{tan}}$ of $\chi$ which correspond to the two classes of isotopies defined above. More precisely, we let 
\begin{equation*}
\chi_{\text{out}}=\{X \in \chi : X(x) \cdot \nu(x) \geq 0 \text{ for every }x \in \partial M\}
\end{equation*}
where $\nu$ is the unit outward normal of $\partial M$ with respect to $M$ and 
\begin{equation*} 
\chi_{\text{tan}}=\{X \in \chi : X(x) \cdot \nu(x) = 0 \text{ for every } x \in \partial M \}.
\end{equation*}
Notice again that $\chi_{\text{tan}} \subset \chi_{\text{out}}$. Each $X \in \chi$ generates a unique isotopy $\{ \varphi_s\}_{s\in[0,1]}$ on $\tilde{M}$ by its flow (since $\tilde{M}$ is closed, the flow exists for all time). Clearly, if $X \in \chi_{\text{out}}$, then $\{\varphi_s\} \in \mathfrak{Is}_{\text{out}}$; if $X \in \chi_{\text{tan}}$, then $\{\varphi_s\} \in \mathfrak{Is}_{\text{tan}}$.

\subsection{Varifolds and restrictions}

Varifolds are fundamental in any min-max construction, compared to other generalized surfaces like currents, because they do not allow for cancellation of mass (see p.24 in \cite{Pitts81}). We will discuss some less standard facts about varifolds. For a more comprehensive treatment, one can refer to \cite{Allard72} \cite{Colding-DeLellis03} \cite{Lin-Yang02} \cite{Simon83}.

Let $\mathcal{V}(\tilde{M})$ denote the space of 2-varifolds on $\tilde{M}$ endowed with the weak topology (see \cite{Simon83}), and $\mathcal{V}(M) \subset \mathcal{V}(\tilde{M})$ be the subspace of 2-varifolds supported in $M$. There is a restriction map 
\begin{equation*}
(\cdot) \llcorner_M: \mathcal{V}(\tilde{M}) \to \mathcal{V}(M) \subset \mathcal{V}(\tilde{M})
\end{equation*} 
defined by $V\llcorner_M(B)=V(B \cap G(M))$ for any $B \subset G(\tilde{M})$, where $G(M)$ and $G(\tilde{M})$ denote the 2-Grassmannian over $M$ and $\tilde{M}$ respectively. Since $M$ is compact, by 2.6.2 (c) in \cite{Allard72}, the restriction map is only upper semi-continuous in the weak topology in the following sense: if $V_i$ is a sequence in $\mathcal{V}(\tilde{M})$ converging weakly to $V$, then $\limsup_{i \to \infty} \|V_i\|(M) \leq \|V\|(M)$. The following lemma shows that if we have equality $\lim_{i \to \infty} \|V_i\|(M) = \|V\|(M)$, then $V_i \llcorner_M$ converges weakly to $V \llcorner M$.

\begin{lemma}
Let $V \in \mathcal{V}(\tilde{M})$. Suppose $V_i \in \mathcal{V}(\tilde{M})$ is a sequence of varifolds converging weakly to $V$ as $i \to \infty$. If the masses $\|V_i\|(M)$ converges to $\|V\|(M)$ as $i \to \infty$, then the restricted varifolds $V_i \llcorner_M$ converges weakly to $V \llcorner_M$ as $i \to \infty$.
\end{lemma}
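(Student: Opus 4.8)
The plan is to test weak convergence against an arbitrary $\varphi \in C_c(G(\tilde{M}))$ and reduce everything to a single scalar estimate. First I would fix $\varphi$ and, by linearity, it suffices to treat $\varphi \geq 0$ (write $\varphi = \varphi^+ - \varphi^-$). The key observation is that for any nonnegative $\varphi$ one has the decomposition
\begin{equation*}
V_i(\varphi) = V_i\llcorner_M(\varphi) + V_i(\varphi \cdot \mathbf{1}_{G(\tilde{M}) \setminus G(M)}),
\end{equation*}
but since $\mathbf{1}_{G(\tilde{M}) \setminus G(M)}$ is not continuous I would instead compare with a continuous cutoff: choose, for each $\delta > 0$, a function $\chi_\delta \in C_c(G(\tilde{M}))$ with $0 \le \chi_\delta \le 1$, $\chi_\delta \equiv 1$ on $G(M)$ and $\chi_\delta$ supported in the $\delta$-neighborhood $G(M_\delta)$ of $G(M)$, where $M_\delta = \{x : \operatorname{dist}(x,M) < \delta\}$.

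The main step is to upgrade the given mass convergence to convergence of masses on the thickened sets. Since $M$ is compact, $\|V\|(\partial M_\delta) = 0$ for all but countably many $\delta$, so by weak convergence $\|V_i\|(M_\delta) \to \|V\|(M_\delta)$ for such $\delta$, and $\|V\|(M_\delta) \downarrow \|V\|(M)$ as $\delta \downarrow 0$ (here I use that $M$ is closed, so $\bigcap_\delta M_\delta = M$). Combined with the hypothesis $\|V_i\|(M) \to \|V\|(M)$, this gives
\begin{equation*}
\limsup_{i \to \infty} \big( \|V_i\|(M_\delta) - \|V_i\|(M) \big) \le \|V\|(M_\delta) - \|V\|(M) =: \eta(\delta),
\end{equation*}
with $\eta(\delta) \to 0$ as $\delta \to 0$. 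Now estimate, for $\varphi \ge 0$ with $\|\varphi\|_\infty \le 1$ say,
\begin{equation*}
\big| V_i\llcorner_M(\varphi) - V_i(\varphi \chi_\delta) \big| \le V_i\big( (\chi_\delta - \mathbf{1}_{G(M)}) \varphi \big) \le \|V_i\|(M_\delta) - \|V_i\|(M).
\end{equation*}
The same estimate holds for $V$ in place of $V_i$. Since $\varphi \chi_\delta \in C_c(G(\tilde{M}))$, weak convergence $V_i \to V$ gives $V_i(\varphi \chi_\delta) \to V(\varphi \chi_\delta)$; feeding this into the two displayed comparisons yields $\limsup_i |V_i\llcorner_M(\varphi) - V\llcorner_M(\varphi)| \le 2\eta(\delta)$. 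Letting $\delta \to 0$ shows $V_i\llcorner_M(\varphi) \to V\llcorner_M(\varphi)$, and by linearity this holds for all $\varphi \in C_c(G(\tilde{M}))$, which is exactly weak convergence $V_i\llcorner_M \to V\llcorner_M$.

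I expect the only genuine obstacle to be the interchange of limits in the thickening argument — specifically, making sure that $\|V_i\|(M_\delta) \to \|V\|(M_\delta)$ really does hold for a sequence of $\delta \downarrow 0$; this is where the boundary-measure-zero condition on $\partial M_\delta$ and the portmanteau-type characterization of weak varifold convergence on open/closed sets are needed. Everything else is a routine triangle-inequality chase. One should be slightly careful that $\chi_\delta$ can be taken of the product form $\tilde\chi_\delta(x)$ (a cutoff on $\tilde M$ pulled back to $G(\tilde M)$), so that $V_i(\varphi\chi_\delta)$ is a bona fide pairing against a compactly supported continuous function on the Grassmann bundle; this causes no trouble since the existence of such cutoffs only uses that $M$ is a compact subset of the manifold $\tilde M$.
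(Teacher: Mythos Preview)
Your argument is correct. Both your proof and the paper's use the same underlying device---continuous cutoffs $\chi_\delta$ (the paper writes them as $\phi_k$) that equal $1$ on $G(M)$ and decrease to $\chi_{G(M)}$---but the organization is different. The paper proceeds by compactness: pass to a subsequential weak limit $W$ of $V_i\llcorner_M$, show $W \le V\llcorner_M$ as measures via the cutoffs and monotone convergence, then use the mass hypothesis to force $\|W\|(M)=\|V\llcorner_M\|(M)$ and hence $W=V\llcorner_M$. You instead give a direct $\varepsilon$--$\delta$ estimate, invoking the portmanteau-type fact that $\|V_i\|(M_\delta)\to\|V\|(M_\delta)$ for all but countably many $\delta$, and then a triangle-inequality chase. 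Your route avoids the subsequence extraction and is slightly more quantitative; the paper's route avoids the portmanteau step and the thickening, trading it for the clean measure-theoretic observation that a nonnegative measure dominated by another and with the same total mass must coincide with it. Both are short and elementary.
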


The proof of Lemma 3.1 is rather elementary and will be given in Appendix A for the sake of completeness.

\subsection{Freely stationary varifolds}

Let $V \in \mathcal{V}(\tilde{M})$. If we take a vector field $X \in \chi_{\text{tan}}$ and let $\{\varphi_s\}_{s \in (-\epsilon, \epsilon)}$ be the flow generated by $X$ and $(\varphi_s)_\sharp V$ be the pushforward of $V$ by $\varphi_s$, then $\|(\varphi_s)_\sharp V\|(M)$ is a smooth function in $s$ since $\varphi_s(M)=M$ for all $s$. Differentiating with respect to $s$ at $s=0$, the same calculation as that in the standard first variation formula (see \cite{Simon83} for example) shows that 
\begin{equation}
\delta_M V(X) =\left. \frac{d}{ds} \right|_{s=0} \|(\varphi_s)_\sharp V \|(M) = \int_{(x,S) \in G(M)} \text{div}_S \; X(x) \; dV(x,S).
\end{equation} 
Notice that we are only integrating over $G(M)$ instead of $G(\tilde{M})$ on the right hand side of (3.1) since we are only counting area in $M$.

\begin{definition}
A varifold $V \in \mathcal{V}(\tilde{M})$ is said to be \emph{freely stationary in an open set $U \subset \tilde{M}$} if and only if $\delta_M V (X)=0$ for all vector fields $X \in \chi_{\text{tan}}$ supported in $U$. If $U=\tilde{M}$, we simply say that $V$ is \emph{freely stationary}.
\end{definition}

We denote the set of varifolds which are freely stationary in $U$ by $\mathcal{V}_{\infty,U} \subset \mathcal{V}(\tilde{M})$ and the set of freely stationary varifolds by $\mathcal{V}_\infty$.

\begin{remark}
It is obvious that $\delta_M V (X) = \delta_M V\llcorner_M (X)$ for any $V \in \mathcal{V}(\tilde{M})$ and $X \in \chi_{\text{tan}}$. Therefore, if $V \in \mathcal{V}(\tilde{M})$ is freely stationary in $U$, then so is $V \llcorner_M$ and vice versa. So, we often assume that freely stationary varifolds are supported in $M$.
\end{remark}

Using the compactness of mass bounded varifolds in the weak topology and (3.1), it is immediate that the set of mass bounded freely stationary varifolds supported in $M$ is compact in the weak topology.

\begin{lemma}
For any open set $U \subset \tilde{M}$, and any constant $C>0$, the set 
\begin{equation*}
\mathcal{V}^C_{\infty,U}(M) = \{ V \in \mathcal{V}_{\infty,U}(M) : \|V\|(M) \leq C\}  
\end{equation*}
is compact in the weak topology.
\end{lemma}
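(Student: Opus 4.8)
The plan is to combine the standard weak-compactness theorem for varifolds with mass bounds together with the closedness of the freely stationary condition under weak convergence. First I would take an arbitrary sequence $\{V_i\} \subset \mathcal{V}^C_{\infty,U}(M)$. Since each $V_i$ is supported in the compact set $M$ and satisfies $\|V_i\|(M) \leq C$, the total masses $\|V_i\|(\tilde{M}) = \|V_i\|(M)$ are uniformly bounded by $C$. By the standard compactness theorem for Radon measures / varifolds in the weak topology (see \cite{Simon83}), after passing to a subsequence we may assume $V_i \rightharpoonup V$ weakly for some $V \in \mathcal{V}(\tilde{M})$.

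Next I would verify that the limit $V$ still lies in $\mathcal{V}^C_{\infty,U}(M)$. The mass bound passes to the limit by lower semicontinuity of mass under weak convergence: $\|V\|(M) \leq \liminf_i \|V_i\|(M) \leq C$ (and one also uses that $M$ is closed, so $\mathrm{spt}\|V\| \subset M$, hence $V \in \mathcal{V}(M)$; alternatively one notes $\|V\|(\tilde{M} \setminus M) \leq \liminf \|V_i\|(\tilde{M} \setminus M) = 0$ if $\tilde M\setminus M$ is open, or simply replaces $V$ by $V\llcorner_M$ and invokes Remark 3.4). The crux is to check that $V$ is freely stationary in $U$, i.e. $\delta_M V(X) = 0$ for every $X \in \chi_{\mathrm{tan}}$ supported in $U$. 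Here I would like to argue that the functional $W \mapsto \delta_M W(X) = \int_{G(M)} \mathrm{div}_S X(x)\, dW(x,S)$ is continuous under weak convergence, so that $\delta_M V(X) = \lim_i \delta_M V_i(X) = 0$.

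The main obstacle is precisely this last continuity claim: the integrand $\mathrm{div}_S X(x)$ is a bounded continuous function on $G(\tilde M)$, but we are integrating only over $G(M)$, which is a closed (not open) subset, so the pairing $W \mapsto \int_{G(M)} \mathrm{div}_S X\, dW$ is only upper semicontinuous in general, as the discussion before Lemma 3.1 makes explicit. To get genuine continuity I would exploit that $X \in \chi_{\mathrm{tan}}$: its flow preserves $M$ (and $\tilde M \setminus M$), so $X$ is tangent to $\partial M$. Because of this tangency one can choose, for each $\varepsilon > 0$, a cutoff so that $\mathrm{div}_S X$ restricted near $\partial M$ is controlled, or — more cleanly — one observes that for $X \in \chi_{\mathrm{tan}}$ the value $\int_{G(M)} \mathrm{div}_S X\, dW$ equals $\int_{G(\tilde M)} (\eta\cdot \mathrm{div}_S X)\, dW$ for a suitable function $\eta$ equal to $1$ on $M$ and vanishing outside a collar, because the divergence of a tangent field integrates to the same thing whether or not the collar is included (the boundary contributions cancel by tangency). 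Since $\eta \cdot \mathrm{div}_S X \in C_c(G(\tilde M))$, the pairing with $W$ is now weakly continuous, giving $\delta_M V(X) = \lim_i \delta_M V_i(X) = 0$. The cleanest route, which I would adopt, is simply to invoke Lemma 3.1: since $\|V_i\|(M) \to \|V\|(M)$ is not automatic, I would instead note that for $X \in \chi_{\mathrm{tan}}$ the quantity $\delta_M V_i(X)$ depends only on $V_i\llcorner_M$ (Remark 3.4) and combine the upper semicontinuity applied to both $X$ and $-X$ after localizing so that $\|V_i\|$ has no concentration on $\partial M$ in the support of $X$ — but if a subsequence does concentrate on $\partial M$ one handles it directly using tangency of $X$. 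Once $\delta_M V(X) = 0$ for all such $X$ is established, we conclude $V \in \mathcal{V}^C_{\infty,U}(M)$, so every sequence has a convergent subsequence with limit in the set, proving sequential compactness; since $\mathcal{V}(\tilde M)$ is metrizable on mass-bounded sets, this is compactness in the weak topology.
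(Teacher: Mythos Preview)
Your overall strategy matches the paper's: extract a weakly convergent subsequence by mass-bounded compactness, check the limit is supported in $M$ with mass $\leq C$, then verify it is freely stationary. However, you are manufacturing a difficulty in the last step that is not actually there, and none of your proposed workarounds (cutoffs, Lemma~3.1, concentration analysis on $\partial M$) is carried to completion.

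The point you are missing is this: by hypothesis every $V_i$ lies in $\mathcal{V}(M)$, i.e.\ is already \emph{supported} in $M$, and you have correctly argued that the weak limit $V$ is too. Consequently $V_i = V_i\llcorner_M$ and $V = V\llcorner_M$, so for any $X \in \chi_{\text{tan}}$ supported in $U$,
\[
\delta_M V_i(X) \;=\; \int_{G(M)} \operatorname{div}_S X \, dV_i \;=\; \int_{G(\tilde M)} \operatorname{div}_S X \, dV_i,
\]
and likewise for $V$. The integrand $\operatorname{div}_S X$ is a bounded continuous function on all of $G(\tilde M)$, so weak convergence $V_i \rightharpoonup V$ gives $\delta_M V_i(X) \to \delta_M V(X)$ directly, hence $\delta_M V(X)=0$. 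There is no upper-semicontinuity issue because no genuine restriction is taking place: the varifolds simply have no mass outside $G(M)$. This is exactly what the paper means when it says the conclusion ``follows directly from the first variation formula (3.1) and the fact that $V_i$ and $V$ are supported in $M$.'' Your detours through Lemma~3.1 and tangency-based cutoffs are unnecessary; once you use the support hypothesis the step is one line.
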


\begin{proof}
Let $V_i$ be a sequence of varifolds in $\mathcal{V}^C_{\infty,U}(M) $. Since $V_i$ are supported in $M$, $\|V_i\|(\tilde{M})=\|V_i\|(M)$ are uniformly bounded by $C$. Therefore, a subsequence of $V_i$ (after relabeling) converges weakly to $V$ in $\mathcal{V}(\tilde{M})$ by compactness of mass bounded varifolds. Since $\mathcal{V}(M)$ is a closed subset of $\mathcal{V}(\tilde{M})$, the limit $V$ is also supported in $M$, i.e. $V \in \mathcal{V}(M)$. Therefore, $\|V\|(M)=\|V\|(\tilde{M})=\lim \|V_i\|(\tilde{M})=\lim \|V_i\|(M) \leq C$. It remains to show that $V$ is freely stationary in $U$, but this follows directly from the first variation formula (3.1) and the fact that $V_i$ and $V$ are supported in $M$.
\end{proof}

In this paper, we will use some results in \cite{Gruter-Jost86}, where the monotonicity formula and the Allard regularity for freely stationary varifolds were proved. Their proofs were given for rectifiable varifolds in $\mathbb{R}^N$ but they can be easily generalized to general varifolds in Riemannian manifolds. As a result, we have the following monotonicity formula for any freely stationary varifold $V$ in $\tilde{M}$: there exists a constant $R>0$ (depending only on the geometry of $M$ and $\partial M$), and a function $C(r) \geq 1$ such that for any $x \in \partial M$ and $0 \leq \sigma \leq \rho \leq r \leq R$,
\begin{equation}
\frac{\|V\|(M \cap B_\sigma(x))}{\sigma^2} \leq C(r) \frac{\|V\|(M \cap B_\rho(x))}{\rho^2}.
\end{equation}
Note that $C(r)$ goes to $1$ as $r \to 0$, so the density $\theta(x,V)$ of the freely stationary varifold $V$ at $x \in \partial M$ is well-defined.

In \cite{Gruter-Jost86a}, the well-known Schoen curvature estimates \cite{Schoen83} for stable minimal surfaces were generalized to the free boundary case. The compactness theorem for stable minimal surfaces with free boundary follows easily from the curvature estimates.

\begin{lemma}[Gr\"{u}ter-Jost \cite{Gruter-Jost86}]
Let $U \subset \tilde{M}$ be an open set. Suppose $\{\Sigma^n\}$ is a sequence of properly embedded stable minimal surfaces in $U \cap M$ with free boundary on $U \cap \partial M$, and their areas are uniformly bounded. Then, for any compact subset $K \subset \subset U$, there is a subsequence of $\Sigma^n$ converging smoothly (multiplicity allowed) to a properly embedded stable minimal surface $\Sigma^\infty$ in $K \cap M$ with free boundary lying on $K \cap \partial M$.
\end{lemma}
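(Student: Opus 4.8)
The plan is to reduce the statement to a uniform bound on the second fundamental forms $|A_{\Sigma^n}|$ up to the free boundary, and then feed that bound into the standard ``bounded curvature implies smooth subconvergence'' argument, diagonalizing over an exhaustion of $U$. Fix an exhaustion $K_1 \subset\subset K_2 \subset\subset \cdots$ of $U$ by compact sets; since a diagonal subsequence will suffice, it is enough to treat a single $K = K_j$, and we pick $K'$ with $K \subset\subset K' \subset\subset U$.

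The key step is the a priori estimate $\sup_{K \cap M} |A_{\Sigma^n}| \le C$ with $C = C(K', M, \sup_n \mathrm{Area}(\Sigma^n))$. Away from $\partial M$ this is Schoen's curvature estimate \cite{Schoen83} for embedded stable minimal surfaces, the area bound being used (via monotonicity) to control the relevant mass ratios. Up to the free boundary I would invoke the Gr\"uter--Jost extension of Schoen's estimate \cite{Gruter-Jost86a}: reflect a collar of $\partial M$ by the geodesic (nearest-point) reflection to obtain a doubled metric that is smooth off $\partial M$ and Lipschitz across it, and reflect $\Sigma^n$ in this collar to a surface $\widehat\Sigma^n$; because $\Sigma^n$ meets $\partial M$ orthogonally, $\widehat\Sigma^n$ is $C^1$ across $\partial M$, hence smooth by elliptic regularity for the (weakly) minimal doubled surface, and is still stable, so the interior-type curvature estimate applies to $\widehat\Sigma^n$ and restricts to the desired bound for $\Sigma^n$.

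Granting $\sup_{K \cap M}|A_{\Sigma^n}| \le C$, each $\Sigma^n$ is, near any of its points, a graph of bounded $C^2$ norm over a disk of a fixed radius $r_0(C,M)>0$ in its tangent plane --- a half-disk with the orthogonality condition when the base point lies on $\partial M$. The Arzel\`{a}--Ascoli theorem then extracts a subsequence converging in $C^{1,\alpha}_{\mathrm{loc}}$ on $K \cap M$; the limit is minimal because the minimal surface equation (in the interior) and the minimal surface equation together with the oblique free boundary condition (at $\partial M$) pass to the $C^{1,\alpha}$ limit, and Schauder estimates for these interior and oblique-boundary problems bootstrap the convergence to $C^\infty_{\mathrm{loc}}$. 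The limit $\Sigma^\infty$ is properly embedded: it is a smooth limit of embedded surfaces with locally bounded curvature, so locally it is a graphical limit with some integer multiplicity, and two sheets that touch must coincide --- by the strong maximum principle in the interior and by the Hopf boundary point lemma along the free boundary; as a set $\Sigma^\infty$ is thus a properly embedded minimal surface with free boundary on $K \cap \partial M$. Stability passes to the limit because, for any $\phi \in C^1_c$ near $\Sigma^\infty$, pulling $\phi$ back to $\Sigma^n$ and using the $C^2$ convergence makes the stability inequality $\int |\nabla\phi|^2 \ge \int (\mathrm{Ric}_{\tilde M}(N,N) + |A|^2)\phi^2$ pass to the limit. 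Diagonalizing over $j$ finishes the proof.

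The main obstacle is the curvature estimate up to the free boundary --- specifically the reflection step, where the doubled ambient metric is only Lipschitz across $\partial M$, so one must verify either that $\widehat\Sigma^n$ is regular enough for Schoen's argument to run or that Schoen's argument survives this borderline ambient regularity (which is exactly what Gr\"uter--Jost carry out), and where the orthogonality hypothesis is precisely what upgrades the reflected surface to $C^1$ and hence to a genuine minimal surface. A secondary subtlety is excluding loss of embeddedness in the limit \emph{at} the free boundary, which needs the Hopf boundary point lemma rather than the interior maximum principle.
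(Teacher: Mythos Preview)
Your outline is correct and is essentially the Gr\"uter--Jost argument: free-boundary curvature estimates obtained by reflecting across $\partial M$ and invoking Schoen's interior estimates, followed by Arzel\`a--Ascoli graphical subconvergence, elliptic bootstrapping, and the maximum principle/Hopf lemma to retain embeddedness. Note, however, that the paper does not actually prove this lemma --- it is stated as a cited result from \cite{Gruter-Jost86a}, with the preceding sentence ``the curvature estimates for stable minimal surfaces of Schoen \cite{Schoen83} were generalized to the free boundary case'' indicating exactly the mechanism you describe; so there is no independent proof in the paper to compare against, and your sketch matches the approach the paper attributes to Gr\"uter--Jost.
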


\subsection{Outer almost minimizing property}

In general, a stationary varifold may possess singularity and hence is not a smooth minimal surface. Even if it is smooth, the example in section 2 shows that an embedded smooth minimal surface which is freely stationary may fail to be properly embedded. In order to achieve regularity and properness of the freely stationary varifold, we require a stronger condition called \emph{outer almost minimizing property}. Roughly speaking, a surface is outer almost minimizing means that if you want to decrease its area in $M$ through an outward isotopy, its area in $M$ must become large at some time during the deformation. The precise definition is given below.

\begin{definition}
Given $\epsilon>0$ and an open set $U \subset \tilde{M}$, a varifold $V \in \mathcal{V}(\tilde{M})$ is \emph{$\epsilon$-outer almost minimizing in $U$} if and only if there does not exist isotopy $\{\varphi_s\}_{s \in [0,1]} \in \mathfrak{Is}_{\text{out}}(U)$ such that 
\begin{enumerate}
	\item $\|(\varphi_s)_\sharp V\|(M) \leq \|V\|(M) + \frac{\epsilon}{8}$ for all $s \in [0,1]$;
	\item $\|(\varphi_1)_\sharp V\|(M) \leq \|V\|(M)-\epsilon$.
\end{enumerate} 
A sequence $\{V^n\} \subset \mathcal{V}(\tilde{M})$ is said to be an \emph{outer almost minimizing sequence in $U$} if each $V^n$ is $\epsilon_n$-outer almost minimizing in $U$ for some sequence $\epsilon_n \downarrow 0$. Moreover, if $V^n$ converges weakly to some $V$ in $U$, then we say that $V$ is \emph{outer almost minimizing in $U$}.
\end{definition}

\begin{remark}
The definition is almost the same as the one used in \cite{Colding-DeLellis03} except that we are considering the area in $M$ and outward isotopies only.
\end{remark}

It is not hard to see that any outer almost minimizing varifold is freely stationary.

\begin{proposition}
If $V$ is outer almost minimizing in $U$, then $V$ is freely stationary in $U$.
\end{proposition}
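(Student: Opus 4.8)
The plan is to argue by contradiction, converting a tangential vector field that witnesses the failure of free stationarity into an \emph{outer} isotopy that violates the $\epsilon_n$-outer almost minimizing property of the approximating sequence for $n$ large. Suppose then that $V$ is outer almost minimizing in $U$ but not freely stationary in $U$. By the definition of outer almost minimizing there is a sequence $V^n \to V$ weakly in $U$ with each $V^n$ being $\epsilon_n$-outer almost minimizing in $U$ and $\epsilon_n \downarrow 0$. First I would assume, as is automatic in the applications (where the $V^n$ arise from surfaces contained in $M$), that $V$ and all the $V^n$ are supported in $M$; in general this reduction is harmless because isotopies in $\mathfrak{Is}_{\text{out}}$ preserve $\tilde M \setminus M$, so the quantities appearing in the definition depend only on the restriction to $M$, and by Remark 3.3 it suffices to prove $V \llcorner_M$ freely stationary. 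Since $V$ is not freely stationary, there is $X \in \chi_{\text{tan}}$ supported in $U$ with $\delta_M V(X) \neq 0$; as $\chi_{\text{tan}}$ is a vector space and $X \mapsto \delta_M V(X)$ is linear by (3.1), after replacing $X$ by $-X$ if necessary I may assume $\delta_M V(X) < 0$. Let $\{\varphi_s\}_{s \in [0,1]}$ be the flow of $X$, so that $\{\varphi_s\} \in \mathfrak{Is}_{\text{tan}}(U) \subset \mathfrak{Is}_{\text{out}}(U)$.

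The key step is to show that this \emph{single} isotopy decreases the $M$-mass of $V^n$, and does so \emph{monotonically}, for all large $n$. Write $g_n(s) = \|(\varphi_s)_\sharp V^n\|(M)$ and $g(s) = \|(\varphi_s)_\sharp V\|(M)$. Since every $\varphi_s$ preserves $M$ these are smooth functions of $s$, and applying the first variation formula (3.1) to the varifold $(\varphi_s)_\sharp V^n$ — which is again supported in $M$ — gives $g_n'(s) = \int_{G(M)} \text{div}_S X \, d((\varphi_s)_\sharp V^n) = \int_{G(\tilde M)} \text{div}_S X \, d((\varphi_s)_\sharp V^n)$, and likewise for $g'$. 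For each fixed $s$, $(\varphi_s)_\sharp V^n \to (\varphi_s)_\sharp V$ weakly and $\text{div}_S X$ is a fixed bounded continuous function on $G(\tilde M)$, so $g_n'(s) \to g'(s)$ pointwise; differentiating the first variation formula once more shows $|g_n''(s)| \leq C \sup_n \|V^n\|(M) < \infty$ uniformly in $n$ and $s \in [0,1]$ (the supremum is finite since $V^n$ converges weakly), so the family $\{g_n'\}$ is equicontinuous on $[0,1]$. Pointwise convergence plus equicontinuity on the compact interval $[0,1]$ then gives uniform convergence $g_n' \to g'$. Since $g'(0) = \delta_M V(X) < 0$ and $g'$ is continuous, I fix $T \in (0,1]$ and $c_0 > 0$ with $g' \leq -c_0$ on $[0,T]$, whence $g_n' \leq -c_0/2$ on $[0,T]$ for all $n$ large.

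For such $n$, $g_n$ is strictly decreasing on $[0,T]$, so $g_n(s) \leq g_n(0) = \|V^n\|(M)$ for all $s \in [0,T]$, and in addition $g_n(T) \leq \|V^n\|(M) - c_0 T/2$. Reparametrizing $[0,T]$ to $[0,1]$ — equivalently, running the flow of $TX \in \chi_{\text{tan}}$ — yields an isotopy $\{\psi_s\}_{s \in [0,1]} \in \mathfrak{Is}_{\text{tan}}(U) \subset \mathfrak{Is}_{\text{out}}(U)$ with $\|(\psi_s)_\sharp V^n\|(M) \leq \|V^n\|(M) \leq \|V^n\|(M) + \epsilon_n/8$ for all $s \in [0,1]$, and with $\|(\psi_1)_\sharp V^n\|(M) \leq \|V^n\|(M) - c_0 T/2 \leq \|V^n\|(M) - \epsilon_n$ once $n$ is large enough that $\epsilon_n \leq c_0 T/2$. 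Thus $\{\psi_s\}$ satisfies conditions (1) and (2) in the definition of $\epsilon_n$-outer almost minimizing in $U$, contradicting the choice of $V^n$ and completing the proof.

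I expect the one genuinely delicate point to be the uniform convergence $g_n' \to g'$ on $[0,T]$: it matters that the varifolds stay supported in $M$, so that the first variation integrands are honest continuous functions on $G(\tilde M)$ and restricting the integration from $G(\tilde M)$ to $G(M)$ costs nothing in weak continuity, and one must verify that the bound on $g_n''$ is uniform in $n$. Passing to the derivatives rather than working directly with $g_n \to g$ is exactly what makes the argument go through, since it is the \emph{monotone} decrease of $g_n$ on $[0,T]$ — not merely the smallness of $g_n(T) - g_n(0)$ — that produces condition (1) of the definition with no dependence on the rate at which $\epsilon_n \downarrow 0$.
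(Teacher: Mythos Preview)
Your proof is correct and follows essentially the same route as the paper's: both argue by contradiction, use the flow of a tangential vector field $X$ with $\delta_M V(X)<0$, and combine equicontinuity of $s\mapsto \delta_M V^n_s(X)$ with convergence to $\delta_M V_s(X)$ to force a uniform linear mass decrease along the flow, contradicting $\epsilon_n$-outer almost minimizing. The only minor differences are cosmetic: you make the reduction to varifolds supported in $M$ explicit (which upgrades the paper's $\limsup$ inequality (3.7) to honest pointwise convergence), you obtain equicontinuity via a uniform bound on $g_n''$ rather than by direct inspection of the pushforward formula, and you spell out the reparametrization to $[0,1]$ and the verification of condition (1).
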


\begin{proof}
Since $V$ is outer almost minimizing in $U$, there exists, by definition, a sequence $\epsilon_n \downarrow 0$ for which $V^n$ is $\epsilon_n$-outer almost minimizing in $U$. We will prove the proposition by contradiction.

Suppose, in contrary, that $V$ is not freely stationary in $U$. Then there is a vector field $X \in \chi_{\text{tan}}$ supported in $U$ such that 
\begin{equation*}
\delta_M V(X) \leq -c <0
\end{equation*}
for some real constant $c>0$. Let $\{\varphi_s\}_{s \in \mathbb{R}}$ be the flow generated by $X$. Suppose $V_s$ and $V^n_s$ are the pushforwards of the varifolds $V$ and $V^n$, respectively, by the diffeomorphism $\varphi_s$. Since $X \in \chi_{\text{tan}}$, (3.1) implies that $\delta_M V_s(X)$ is a continuous function in $s$ (for $X$ fixed). Therefore, we have 
\begin{equation}
\delta_M V_s(X) \leq -\frac{c}{2} < 0
\end{equation}
for all $s \in [0,s_0]$ for some $s_0>0$. We claim that for all sufficiently large $n$, it holds true that
\begin{equation}
\delta_M V^n_s(X) \leq -\frac{c}{4} < 0
\end{equation}
for all $s \in [0,s_0]$.

Let us assume (3.4) for the moment, integrating in $s$ gives
\begin{equation}
\|V^n_s\|(M) \leq \|V^n\|(M) - \frac{cs}{4}
\end{equation}
for all $s \in [0,s_0]$. Since $V^n$ is $\epsilon_n$-outer almost minimizing in $U$, this implies
\begin{equation}
\|V^n_{s_0}\|(M) \geq \|V^n\|(M)-\epsilon_n
\end{equation}
for all $n$ sufficiently large. Combining (3.5) and (3.6), we have $\epsilon_n \geq \frac{cs_0}{4}>0$ for $n$ sufficiently large. This is a contradiction since $\epsilon_n \downarrow 0$.

It remains to verify (3.4). Let $f_n(s)=\delta_M V^n_s(X)$ and $f(s)=\delta_M V_s(X)$. From the definition of pushforward of a varifold and (3.1), $\{f_n\}_{n \in \mathbb{N}}$ is an equicontinuous family of functions on $[0,s_0]$. Furthermore, 
\begin{equation}
\limsup_{n \to \infty} f_n(s) \leq f(s)
\end{equation} 
for all $s \in [0,s_0]$. Our goal is to prove that for any $\epsilon>0$, $f_n(s) \leq f(s)+\epsilon$ for all $n$ sufficiently large and all $s \in [0,s_0]$. If not, then there exists a sequence $n_k \to \infty$ and $s_k \in [0,s_0]$ such that $f_{n_k}(s_k) > f(s_k)+\epsilon$. Without loss of generality, we can assume $s_k \to  s_\infty$ for some $s_\infty \in [0,s_0]$.  By equicontinuity, for $k$ sufficiently large, we have 
\begin{equation*}
f_{n_k}(s_\infty)+ \frac{\epsilon}{2} \geq f_{n_k}(s_k) > f(s_k)+\epsilon.
\end{equation*} 
Take $k \to \infty$, we get $\limsup_{n \to \infty} f_n(s_\infty) \geq \limsup_{k \to \infty} f_{n_k}(s_\infty) \geq f(s_\infty)+\epsilon/2$, which contradicts (3.7). Finally, (3.4) follows from (3.3) by taking $\epsilon=c/4$.

\end{proof}

The example in section 2 shows that the converse of Proposition 3.8 is false (Figure 1). The equatorial disk shown in Figure 1 is freely stationary and almost minimizing with respect to isotopies preserving $M$, but it fails to be outer almost minimizing. By allowing more deformations, we can rule out such cases as in Figure 1 and obtain a properly embedded free boundary solution from a min-max construction to be described in section 4.  


\section{The min-max construction}

In this section, we describe a min-max construction for properly embedded minimal surfaces with free boundary in any compact Riemannian $3$-manifold with boundary.

\subsection{Sweepouts}

First, we define a generalized family of surfaces which allow mild singularities and changes in topology. We will always parametrize a sweepout by the letter $t$ over the interval $[0,1]$ unless otherwise stated.

\begin{definition}
A family $\{ \Sigma_t\}_{t \in [0,1]}$  of surfaces in $\tilde{M}$ is said to be a \emph{generalized smooth family of surfaces}, or simply a \emph{sweepout}, if and only if there exists a finite subset $T \subset [0,1]$ and a finite set of points $P \subset \tilde{M}$ such that 
\begin{itemize}
	\item[(1)] for $t \notin T$, $\Sigma_t$ is a smoothly embedded closed surface (not necessarily connected) in $\tilde{M}$;
	\item[(2)] for $t \in T$, $\Sigma_t \setminus P$ is a smoothly embedded surface (not necessarily connected) in $\tilde{M}$ and $\Sigma_t$ is compact; and
	\item[(3)] $\Sigma_t$ varies smoothly in $t$ (see Remark 4.2 below).
\end{itemize}
If, in addition to (1)-(3) above,
\begin{itemize}
	\item[(4)] $\mathcal{H}^2(\Sigma_t \cap M)$ is a continuous function in $t \in [0,1]$, here $\mathcal{H}^2$ is the 2-dimensional Hausdorff measure induced by the metric on $\tilde{M}$,
\end{itemize}
we say that $\{\Sigma_t\}_{t \in [0,1]}$ is a \emph{continuous sweepout}.
\end{definition}

\begin{remark}
The smoothness condition in (3) means the following: for each $t \notin T$, for $\tau$ close enough to $t$, $\Sigma_\tau$ is a graph over $\Sigma_t$ (hence diffeomorphic to $\Sigma_t$) and $\Sigma_\tau$ converges smoothly to $\Sigma_t$ as a graph when $\tau \to t$. At $t \in T$, for any $\epsilon>0$ small, let $P_\epsilon=\{x \in \tilde{M}:d(x,P)<\epsilon\}$, then $\Sigma_\tau \setminus P_\epsilon$ converges smoothly to $\Sigma_t \setminus P_\epsilon$ in the graphical sense above as $\tau \to t$.
\end{remark}

\begin{remark}
Note that condition (4) is not redundant since we could have a continuous (or even smooth) family of $\Sigma_t \subset \tilde{M}$ such that $\mathcal{H}^2(\Sigma_t \cap M)$ is discontinuous. In general, the function $\mathcal{H}^2(\Sigma_t \cap M)$ is only upper semi-continuous in $t$. See Figure 4.1 for an example of a discontinuous sweepout of a topological annulus by curves. Here we let $M$ be a disk with a square removed and consider a sweepout of $M$ by vertical lines.This example is one dimension lower but a similar example in $\mathbb{R}^3$ can be constructed easily. In fact, by Lemma 3.1, condition (4) is equivalent to saying that $\{\Sigma_t \cap M\}_{t \in [0,1]}$ is a continuous family as varifolds in $M$.
\end{remark}

\begin{figure}
\centering
\includegraphics[height=8cm]{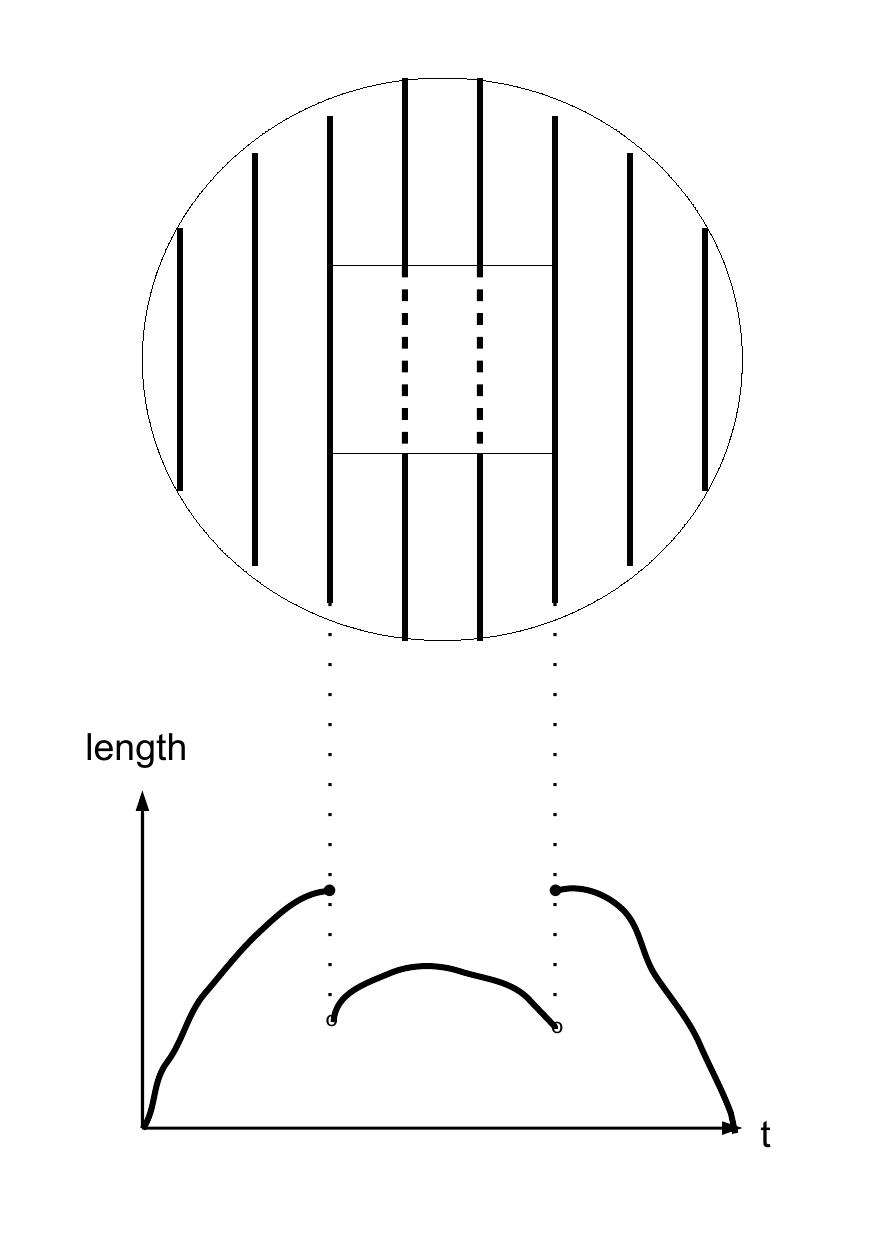}
\caption{A sweepout which is not continuous. (The dashed lines represent the part of the line which lies outside $M$, whose length is not counted.)}
\end{figure}

\subsection{The min-max construction}

Given a sweepout $\{\Sigma_t\}$, we can deform the sweepout to get another sweepout by the following procedure. Let $\psi=\psi_t(x)=\psi(t,x): [0,1] \times \tilde{M} \to \tilde{M}$ be a smooth map such that for each $t \in [0,1]$, there exists isotopies $\{\varphi^t_s\}_{s \in [0,1]} \in \mathfrak{Is}^{\text{out}}$ such that $\varphi^t_1=\psi_t$. We define a new family $\{ \Sigma'_t\}$ by $\Sigma'_t=\psi_t(\Sigma_t)$. It is clear that $\{\Sigma'_t\}$ is a sweepout in the sense of Definition 4.1. A collection $\Lambda$ of sweepouts is \emph{saturated} if it is closed under these deformations of sweepouts. 

\begin{remark}
For technical reasons, we will assume that any saturated collection $\Lambda$ of sweepouts has the additional property that there exists some natural number $N=N(\Lambda) < \infty$ such that for any $\{\Sigma_t\} \in \Lambda$, the set $P$ in Definition 4.1 consists of at most $N$ points.
\end{remark}

We will apply our min-max construction to a saturated collection of sweepouts. Given any such collection $\Lambda$, and any sweepout $\{\Sigma_t\} \in \Lambda$, we denote by $\mathcal{F}(\{\Sigma_t\})$ the area of its maximal slice (with respect to area in $M$) and by $m_0(\Lambda)$ the infimum of $\mathcal{F}$ over all sweepouts in $\Lambda$; that is,
\begin{equation*} 
\mathcal{F}(\{\Sigma_t\})=\sup_{t \in [0,1]} \mathcal{H}^2(\Sigma_t \cap M),
\end{equation*}
and
\begin{equation*}
m_0(\Lambda)=\inf_{\{\Sigma_t\} \in \Lambda} \mathcal{F}(\{\Sigma_t\}).
\end{equation*}
Note that we have to take ``sup'' in the definition of $\mathcal{F}$ instead of ``max'' (as in \cite{Colding-DeLellis03}) because the maximum may not be achieved if the sweepout is not continuous.

\begin{definition}
Given a saturated collection $\Lambda$ of sweepouts, 
\begin{itemize}
	\item[(1)] A sequence $\{\Sigma_t^n\}_{n \in \mathbb{N}}$ of sweepouts in $\Lambda$ is a \emph{minimizing sequence of sweepouts} if 
	\begin{equation*}
	\lim_{n \to \infty} \mathcal{F}(\{\Sigma^n_t\})=m_0(\Lambda).
	\end{equation*}
	\item[(2)] Let $\{\Sigma_t^n\}_{n \in \mathbb{N}}$ be a minimizing sequence of sweepouts. Suppose we have a sequence $t_n \in [0,1]$. We say that $\{\Sigma^n_{t_n} \cap M\}_{n \in \mathbb{N}}$ is a \emph{min-max sequence of surfaces} if
	\begin{equation*}
	\lim_{n \to \infty} \mathcal{H}^2(\Sigma^n_{t_n} \cap M)=m_0(\Lambda).
	\end{equation*}
\end{itemize}
\end{definition}

Our goal is to show that there exists some min-max sequence $\Sigma^n_{t_n} \cap M$ converging (in the varifold sense) to a properly embedded free boundary solution $\Sigma$ (possibly with multiplicities). It is clear that the area of $\Sigma$ (counting multiplicities) is equal to $m_0(\Lambda)$. In order to produce something non-trivial, we need  $m_0(\Lambda) > 0$. We first show by an isoperimetric inequality that this can be done by choosing an initial sweepout to be the level sets of a Morse function.

\begin{proposition}
There exists a saturated collection $\Lambda$ of sweepouts with $m_0(\Lambda)>0$.
\end{proposition}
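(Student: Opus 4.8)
My plan is to construct an explicit initial sweepout using the level sets of a Morse function on $\tilde M$ adapted to $M$, take the saturated collection $\Lambda$ that it generates, and then show $m_0(\Lambda)>0$ by an isoperimetric argument. First I would choose a Morse function $f:\tilde M\to[0,1]$ with distinct critical values, and let $\Sigma_t=f^{-1}(t)$; a standard fact is that $\{\Sigma_t\}_{t\in[0,1]}$ is a sweepout in the sense of Definition 4.1, where the finite set $T$ corresponds to the critical values of $f$ and the finite set $P$ to the critical points. (I may first perturb $f$ slightly so that $\partial M$ is transverse to the level sets $\Sigma_t$ for $t\notin T$; this guarantees that $\Sigma_t\cap M$ is a smooth surface with boundary on $\partial M$ for generic $t$, and also makes $\mathcal H^2(\Sigma_t\cap M)$ well-behaved — though note that condition (4) of Definition 4.1 need not hold for this particular sweepout, so I would only claim $\{\Sigma_t\}$ is a sweepout, not a continuous one, which is all the definition of $\Lambda$ requires.) Let $\Lambda$ be the smallest saturated collection containing $\{\Sigma_t\}$; by Remark 4.6 I should also check that the number of points in $P$ stays bounded under the allowed deformations, which is automatic since $\psi_t$ are diffeomorphisms and hence $|P|$ is preserved.

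The heart of the matter is the lower bound $m_0(\Lambda)>0$. The idea is the usual one from the closed case (cf.\ \cite{Colding-DeLellis03}): fix a small geodesic ball $B=B_r(p)$ with $p\in\mathrm{int}(M)$ and $B\subset\mathrm{int}(M)$, chosen so small that the isoperimetric inequality holds inside $B$ — i.e.\ any region $\Omega\subset B$ with $\mathcal H^3(\Omega)\le\frac12\mathcal H^3(B)$ satisfies $\mathcal H^2(\partial\Omega\cap B)\ge c\,\mathcal H^3(\Omega)^{2/3}$ for some constant $c>0$ depending only on the local geometry. Given any sweepout $\{\Sigma'_t\}\in\Lambda$, I would consider the function $t\mapsto\mathcal H^3(U_t\cap B)$ where $U_t$ is the open region "below" $\Sigma'_t$ (which is well-defined at least away from $T$ since $\Sigma'_t$ separates $\tilde M$; one must track this through the deformations, using that elements of $\Lambda$ arise from $\{\Sigma_t\}$ by applying ambient diffeomorphisms $\psi_t$, so each $\Sigma'_t$ still bounds). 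Because the sweepout starts and ends essentially at the min and max level sets of $f$ — so at $t=0$ the region below is empty and at $t=1$ it is all of $\tilde M$ — the quantity $\mathcal H^3(U_t\cap B)$ runs from $0$ to $\mathcal H^3(B)$, hence by continuity in $t$ (away from the finite set $T$, with controlled jumps at $T$ that can be absorbed by shrinking, or by a limiting argument) there is some $t_*$ with $\mathcal H^3(U_{t_*}\cap B)=\frac12\mathcal H^3(B)$. At that slice, $\partial(U_{t_*}\cap B)\cap B\subset\Sigma'_{t_*}\cap B$, so the isoperimetric inequality gives $\mathcal H^2(\Sigma'_{t_*}\cap M)\ge\mathcal H^2(\Sigma'_{t_*}\cap B)\ge c\,(\tfrac12\mathcal H^3(B))^{2/3}=:\delta>0$. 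Since $\delta$ is independent of the sweepout, $\mathcal F(\{\Sigma'_t\})\ge\delta$ for every $\{\Sigma'_t\}\in\Lambda$, whence $m_0(\Lambda)\ge\delta>0$.

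A few technical points need care. First, I must ensure the notion of "region below $\Sigma'_t$" survives the saturation: the deformations allowed in defining $\Lambda$ replace $\Sigma'_t$ by $\psi_t(\Sigma'_t)$ with $\psi_t=\varphi^t_1$ the time-one map of an isotopy in $\mathfrak{Is}_{\mathrm{out}}$, and since each $\varphi^t_1$ is a diffeomorphism of $\tilde M$, the image $\psi_t(\Sigma'_t)$ is still a separating hypersurface, with the region below given by $\psi_t(U_t)$; so the Morse-theoretic bookkeeping is preserved, and $\mathcal H^3$ of that region still sweeps from $0$ to $\mathrm{vol}(\tilde M)$. Second, the small jumps in the volume function at the finitely many $t\in T$ (arising from passing through critical points) have measure-zero effect on $\mathcal H^3$, so the intermediate-value step still produces the required $t_*$; alternatively one can work with $\Sigma'_t$ for $t\notin T$ and pass to a limit. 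The step I expect to be the main obstacle is precisely this one — carefully justifying that for an \emph{arbitrary} element of the saturated collection (not just the initial Morse sweepout) the "enclosed volume in $B$" is a well-defined quantity that varies continuously enough in $t$ to apply the intermediate value theorem; once that is in hand, the isoperimetric estimate is routine.
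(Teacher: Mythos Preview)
Your proposal is correct and follows essentially the same approach as the paper: take the level sets of a Morse function on $\tilde M$, saturate, track the volume enclosed below each slice, use the intermediate value theorem to find a slice that halves the volume, and apply an isoperimetric inequality to bound the area of that slice from below uniformly. The only cosmetic difference is that the paper applies the relative isoperimetric inequality on all of $M$ (tracking $\mathcal H^3(\psi_t(U_t)\cap M)$ directly) rather than localizing to a small interior ball $B$; this makes the continuity-in-$t$ step slightly cleaner but is not a material change in strategy.
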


\begin{proof}
Take any Morse function $\tilde{f}:\tilde{M} \to [0,1]$ on the closed 3-manifold $\tilde{M}$. Define $\Sigma_t=\tilde{f}^{-1}(t)$ for $t \in [0,1]$. Then $\{\Sigma_t\}_{t \in [0,1]}$ is a sweepout in the sense of Definition 4.1. Let $\Lambda$ be the \emph{saturation} of $\{\Sigma_t\}$, the smallest collection of sweepouts which is saturated and contains $\{\Sigma_t\}$. We will show that for such a collection $\Lambda$, we have $m_0(\Lambda)>0$.

Let $\psi=\psi_t(x)=\psi(t,x): [0,1] \times \tilde{M} \to \tilde{M}$ be a smooth map such that for each $t \in [0,1]$, there exists isotopies $\{\varphi^t_s\}_{s \in [0,1]} \in \mathfrak{Is}_{\text{out}}$ such that $\varphi^t_1=\psi_t$. Define the new sweepout $\{\Gamma_t\} \in \Lambda$ by $\Gamma_t=\psi_t(\Sigma_t)$. We claim that $\mathcal{F}(\{\Gamma_t\}) \geq C >0$, where $C$ is a constant independent of $\psi$. This would imply $m_0(\Lambda) \geq C >0$.

To prove our claim, let $U_t=\tilde{f}^{-1}([0,t))$, $U'_t=\tilde{M} \setminus U_t$ and take $V_t=\psi_t(U_t)$, $V'_t=\psi_t(U'_t)$. The compact subset $M$ is a disjoint union of $V_t \cap M$ and $V'_t \cap M$, with $\partial V_t \cap $ int$(M)=\partial V'_t \cap $ int$(M)$. Since the function $t \mapsto \mathcal{H}^3(V_t \cap M)$ is continuous, and $\mathcal{H}^3(V_0 \cap M)=0$, $\mathcal{H}^3(V_1 \cap M)=$ Vol$(M)$, there exists $t_0 \in (0,1)$ such that $\mathcal{H}^3(V_{t_0} \cap M)=\frac{1}{2} \text{Vol}(M)$.

By the isoperimetric inequality, there exists a constant $C=C(M)>0$ such that 
\begin{equation*} 
\frac{1}{2} \text{Vol}(M)=\mathcal{H}^3(V_{t_0} \cap M) \leq C(M)(\mathcal{H}^2(\Gamma_{t_0} \cap M))^{\frac{3}{2}}. 
\end{equation*}
Hence,
\begin{equation*} 
\mathcal{F}(\{\Gamma_t\})=\sup_{t \in [0,1]} \mathcal{H}^2(\Gamma_t \cap M) \geq \left( \frac{\text{Vol}(M)}{2 \; C(M)}\right)^{\frac{2}{3}}>0. 
\end{equation*}
This proves our claim and thus the proposition.
\end{proof}

\subsection{Convergence of min-max sequences}

We now state the main convergence result which implies Theorem 1.1.

\begin{theorem}
Let $M$ be a compact domain of a closed Riemannian 3-manifold $\tilde{M}$. Given any saturated collection of sweepouts $\Lambda$, there exists a min-max sequence of surfaces $\{\Sigma^n_{t_n} \cap M\}_{n \in \mathbb{N}}$ obtained from $\Lambda$, which converges in the sense of varifolds to an integer-rectifiable varifold $V$ in $M$ with $\|V\|(M)=m_0(\Lambda)$. Moreover, there exists natural numbers $n_1,\ldots,n_k$ and smooth compact properly embedded minimal surfaces $\Gamma_1,\ldots,\Gamma_k$ such that 
\begin{equation*}
V=\sum_{i=1}^k n_i \Gamma_i ,
\end{equation*} 
where each $\Gamma_i$ is either closed or meets $\partial M$ orthogonally along the free boundary $\partial \Gamma_i$. 
\end{theorem}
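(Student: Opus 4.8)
The plan is to follow the architecture of the Almgren--Pitts--Colding--De Lellis min-max argument, modified at every stage to account for the boundary and for the restriction to outward isotopies. First I would address the min-max (tightening) step: starting from a minimizing sequence of sweepouts $\{\Sigma^n_t\}$ in $\Lambda$, construct via a pull-tight deformation (using only vector fields in $\chi_{\mathrm{tan}}$, so the new sweepouts stay in $\Lambda$) a new minimizing sequence all of whose almost-maximal slices, when restricted to $M$, are almost freely stationary. The key point is that the tightening map must be built from isotopies preserving $M$ so that the area in $M$ varies smoothly and the argument of Proposition 3.8 applies; the set of varifolds with $\|V\|(M)\le m_0(\Lambda)$ which fail to be $(1/j)$-almost freely stationary admits a continuous deformation decreasing area in $M$, and one patches these together continuously in $t$. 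This produces a min-max sequence $\Sigma^n_{t_n}\cap M$ whose varifold limits (along subsequences) are all freely stationary, with mass in $M$ equal to $m_0(\Lambda)$, and by Lemma 3.1 the restricted varifolds converge weakly (the mass in $M$ converges to $m_0(\Lambda)$ by the continuity in condition (4) of Definition 4.1 together with upper semicontinuity).

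Next I would invoke the combinatorial argument (the analogue of Almgren--Pitts' ``either/or'' lemma, section 5--6 as advertised) to extract from the tightened minimizing sequence a min-max sequence $\{\Sigma^n_{t_n}\cap M\}$ which is additionally an \emph{outer almost minimizing sequence} in sufficiently small annuli: for every $x$ there is a small radius such that $\Sigma^n_{t_n}$ is $\epsilon_n$-outer almost minimizing in $\mathrm{An}(x,r,2r)$ for some $r$, with $\epsilon_n\downarrow 0$. The proof is by contradiction: if no such min-max sequence existed, one could perform finitely many localized outward surgeries (supported in a covering by small balls, using $\mathfrak{Is}_{\mathrm{out}}$) on every almost-maximal slice to strictly lower its area in $M$ below $m_0(\Lambda)-\eta$, contradicting minimality; the bookkeeping is the standard interpolation/discretization argument of Colding--De Lellis adapted so that all surgeries are outward and the number of points $P$ in the sweepouts stays bounded by $N(\Lambda)$ (Remark 4.6). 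Let $V$ be a varifold limit of this sequence; then $V\llcorner_M$ is freely stationary and outer almost minimizing in small annuli about every point of $M$, and $\|V\|(M)=m_0(\Lambda)$.

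It remains to prove that such a $V$ is, up to multiplicity, a finite sum of smooth compact properly embedded minimal surfaces meeting $\partial M$ orthogonally. For points in $\mathrm{int}(M)$ this is exactly the interior regularity theory of Colding--De Lellis: $V$ restricted to a small interior ball is achieved as a smooth limit of stable minimal surfaces (replace $V$ in an annulus by area minimizers in the isotopy class, use the Schoen--Simon--Yau / Schoen curvature estimates for stability, and remove the point singularity), so $V$ is a smooth embedded minimal surface there with integer multiplicity. For boundary points one uses the free-boundary counterparts already set up in the excerpt: the monotonicity formula (3.2) and the Gr\"uter--Jost free-boundary Allard regularity (for smoothness of the support where density is $1$), the Gr\"uter--Jost compactness Lemma 3.7 for stable free-boundary minimal surfaces (to take smooth limits), and the partially-free-boundary minimization problem of section 7 to produce the comparison stable surfaces. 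The outer almost minimizing property is exactly what forces the limit to be \emph{properly} embedded at $\partial M$ — it rules out the equatorial-disk phenomenon of section 2 — and this boundary regularity (section 8) is the main obstacle and the heart of the paper. Finally, a standard argument shows the regular set is open and closed in $M$ away from finitely many points and that removable-singularity plus orthogonality give smooth compact $\Gamma_i$; the multiplicities $n_i$ are the local densities, giving $V=\sum_i n_i\Gamma_i$ with $\|V\|(M)=\sum_i n_i\,\mathcal H^2(\Gamma_i\cap M)=m_0(\Lambda)$.
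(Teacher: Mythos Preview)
Your outline tracks the paper's architecture closely: tightening with $\chi_{\mathrm{tan}}$-vector fields (Proposition 4.8), Almgren's combinatorial argument to extract an outer almost minimizing min-max sequence (Proposition 4.10), and regularity via replacements built from the partially-free-boundary minimization problem together with the Gr\"uter--Jost free-boundary estimates (Proposition 4.11). So the global strategy is correct and is the same as the paper's.

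There is, however, one genuine gap. You invoke ``the continuity in condition (4) of Definition 4.1'' as if it were automatic, but in this setting it is not: a sweepout in $\Lambda$ only satisfies (1)--(3), and $t\mapsto \mathcal{H}^2(\Sigma_t\cap M)$ is in general merely upper semicontinuous (Remark 4.3, Figure 2). Both the tightening step and the combinatorial step depend on this continuity in essential ways. In the tightening, one needs $t\mapsto \Sigma_t\cap M$ to be continuous in $\mathcal{V}(M)$ in order to feed it into the continuous tightening map $\Psi$ and obtain a \emph{smooth} family of isotopies; in the combinatorial argument (Lemma 6.3), one needs the ``big slice'' set $K_n$ to be compact and one needs the openness statement (Claim 2) that an area-decreasing isotopy for $\Sigma^n_t$ still works for nearby $\Sigma^n_\tau$ --- both of which fail without (4). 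The paper singles out precisely this point as ``the key new ingredient'': a perturbation lemma (Lemma 4.9, proved in Appendix B) showing that any sweepout can be approximated in $\Lambda$ by a \emph{continuous} sweepout, via outward isotopies generated by small normal pushes off $\partial M$ combined with a parametric Morse-theory argument to make $\Sigma_t$ meet $\partial M$ in a set of $\mathcal{H}^2$-measure zero for every $t$. Without this lemma your step (1) and step (2) do not go through as written; once you insert it, your outline coincides with the paper's proof.
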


The proof of Theorem 4.7 can be divided into three parts. The first part is a tightening argument which is similar to Birkhoff's curve shortening process \cite{Birkhoff17}. The goal is to find a \emph{good} minimizing sequence of sweepouts such that almost maximal slices are almost freely stationary. This rules out the existence of almost maximal \emph{bad} slices (see Figure 4 in \cite{Colding-DeLellis03}) which would not converge to a freely stationary varifold. The precise statement we will prove is the following:

\begin{proposition}
Given a saturated collection $\Lambda$ of sweepouts, there exists a minimizing sequence of sweepouts $\{ \Sigma^n_t\}_{n \in \mathbb{N}}$ such that
\begin{itemize} 
\item[(1)] $\{ \Sigma^n_t\}_{t \in [0,1]}$ is a continuous sweepout for each $n \in \mathbb{N}$.
\item[(2)] Every min-max sequence of surfaces $\{\Sigma^n_{t_n} \cap M\}_{n \in \mathbb{N}}$ constructed from such a minimizing sequence has a subsequence converging weakly to a freely stationary varifold $V \in \mathcal{V}_\infty$ supported in $M$. 
\end{itemize}
\end{proposition}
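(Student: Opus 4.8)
The plan is to run the \emph{pull-tight} (tightening) scheme of Birkhoff and Pitts in the streamlined form of Colding--De Lellis \cite{Colding-DeLellis03}, modified so that the deformations are generated by vector fields in $\chi_{\text{tan}}$ and only the area inside $M$ is monitored. Fix $L:=2m_0(\Lambda)+1$ and work in $\overline{\mathcal{V}}_L(M):=\{V\in\mathcal{V}(M):\|V\|(M)\le L\}$, which is compact and metrizable in the weak topology; fix a metric $\mathbf{d}$ inducing it. By Lemma 3.4 (with $U=\tilde{M}$) the set $\mathcal{V}^L_\infty(M)$ of freely stationary varifolds in $\overline{\mathcal{V}}_L(M)$ is compact, so $\mathbf{H}(V):=\mathbf{d}(V,\mathcal{V}^L_\infty(M))$ is continuous and vanishes exactly on $\mathcal{V}^L_\infty(M)$. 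The first step is to reduce to \emph{continuous} sweepouts: an arbitrary minimizing sequence in $\Lambda$ has $t\mapsto\mathcal{H}^2(\Sigma_t\cap M)$ only upper semicontinuous, with the finitely many discontinuities occurring where a piece of $\Sigma_t$ grazes $\partial M$; one shaves off each such ``upward spike'' by pushing the offending piece slightly out of $M$, which is an $\mathfrak{Is}_{\text{out}}$ deformation (so the family stays in $\Lambda$) and does not increase $\mathcal{F}$. Carried out carefully and continuously near each bad time, this produces a minimizing sequence of continuous sweepouts, which is conclusion (1).

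The core is to build a continuous map $\Psi:[0,1]\times\overline{\mathcal{V}}_L(M)\to\overline{\mathcal{V}}_L(M)$ with: (a) $\Psi(0,V)=V$; (b) for each $V$ there is an isotopy $\{\varphi^V_s\}\in\mathfrak{Is}_{\text{tan}}\subset\mathfrak{Is}_{\text{out}}$, depending continuously on $V$, with $\Psi(s,V)=(\varphi^V_s)_\sharp V$; (c) $s\mapsto\|\Psi(s,V)\|(M)$ is non-increasing; (d) $\Psi(s,V)=V$ for all $s$ when $V\in\mathcal{V}^L_\infty(M)$, while $\|\Psi(1,V)\|(M)<\|V\|(M)$ when $V\notin\mathcal{V}^L_\infty(M)$. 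Following \cite{Colding-DeLellis03}: if $V\notin\mathcal{V}^L_\infty(M)$ then by Definition 3.2 there is $X\in\chi_{\text{tan}}$ with $\|X\|_{C^1}\le 1$ and $\delta_M V(X)<0$; since $V\mapsto\delta_M V(X)$ is weakly continuous by (3.1) and each band $A_j:=\{V:\mathbf{H}(V)\ge 1/j\}$ is compact, one covers $A_j$ by finitely many weak neighborhoods on each of which one such field drives $\delta_M$ below a fixed negative constant, and assembles a continuous assignment $V\mapsto X_V\in\chi_{\text{tan}}$ via a partition of unity, arranging $X_V\to 0$ as $\mathbf{H}(V)\to 0$. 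One then sets $\Psi(s,V):=(\phi^{X_V}_{sT(V)})_\sharp V$, where $\phi^{X_V}$ is the flow of $X_V$ and $T(V)>0$ is chosen continuously and so small that $r\mapsto\|(\phi^{X_V}_r)_\sharp V\|(M)$ is non-increasing on $[0,T(V)]$ (using $\delta_M((\phi^{X_V}_r)_\sharp V)(X_V)\to\delta_M V(X_V)<0$ as $r\to 0$) and drops by at least $\rho(\mathbf{H}(V))$, with $\rho$ continuous, positive on $(0,\infty)$ and $\rho(0)=0$. Since $X_V\in\chi_{\text{tan}}$ its flow lies in $\mathfrak{Is}_{\text{tan}}$, so $\varphi^V_s(M)=M$ and hence $(\varphi^V_s)_\sharp(\Sigma\cap M)=\varphi^V_s(\Sigma)\cap M$ for any embedded surface $\Sigma$ --- this is what lets $\Psi$ act on actual surfaces through honest ambient isotopies.

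Now apply $\Psi$ to the continuous minimizing sequence $\{\Sigma^n_t\}$: set $\psi^n_t:=\varphi^{\Sigma^n_t\cap M}_1$ and $\tilde{\Sigma}^n_t:=\psi^n_t(\Sigma^n_t)$. By continuity of $\Psi$ and of $t\mapsto\Sigma^n_t\cap M$ as varifolds (Remark 4.3), $\{\psi^n_t\}$ is (after a routine smoothing in $t$) an admissible deformation of sweepouts, so $\{\tilde{\Sigma}^n_t\}\in\Lambda$; and $\mathcal{H}^2(\tilde{\Sigma}^n_t\cap M)=\|\Psi(1,\Sigma^n_t\cap M)\|(M)\le\mathcal{H}^2(\Sigma^n_t\cap M)$ by (c), so the new sequence is still minimizing. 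Let $\{\tilde{\Sigma}^n_{t_n}\cap M\}$ be any min-max sequence of $\{\tilde{\Sigma}^n_t\}$. Since $\mathcal{H}^2(\Sigma^n_{t_n}\cap M)\ge\mathcal{H}^2(\tilde{\Sigma}^n_{t_n}\cap M)\to m_0(\Lambda)$ and $\mathcal{H}^2(\Sigma^n_{t_n}\cap M)\le\mathcal{F}(\{\Sigma^n_t\})\to m_0(\Lambda)$, the family $\{\Sigma^n_{t_n}\cap M\}$ is a min-max sequence of $\{\Sigma^n_t\}$ too. Pass to a subsequence so that $\Sigma^n_{t_n}\llcorner_M\to W$ weakly; as $\mathcal{V}(M)$ is weakly closed and total mass is continuous under weak convergence of varifolds supported in the compact set $M$, we get $\|W\|(M)=\lim\mathcal{H}^2(\Sigma^n_{t_n}\cap M)=m_0(\Lambda)$. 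Because $\psi^n_{t_n}\in\mathfrak{Is}_{\text{tan}}$ preserves $M$, $\tilde{\Sigma}^n_{t_n}\llcorner_M=\Psi(1,\Sigma^n_{t_n}\llcorner_M)\to\Psi(1,W)=:V$ by continuity of $\Psi(1,\cdot)$, and the same mass argument gives $\|V\|(M)=\lim\mathcal{H}^2(\tilde{\Sigma}^n_{t_n}\cap M)=m_0(\Lambda)$. Thus $\|\Psi(1,W)\|(M)=\|W\|(M)$, so the contrapositive of (d) forces $W\in\mathcal{V}^L_\infty(M)$, and then (d) gives $V=\Psi(1,W)=W$; hence $V$ is freely stationary and supported in $M$. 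This is conclusion (2).

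The main obstacle is the construction of $\Psi$ in the second step: making it at once continuous in $(s,V)$, monotone in $\|\cdot\|(M)$ along $s$, strictly area-decreasing off $\mathcal{V}^L_\infty(M)$, the identity on $\mathcal{V}^L_\infty(M)$, and realized by isotopies in $\mathfrak{Is}_{\text{tan}}$. The free-boundary modification itself is mild --- one needs only that $\chi_{\text{tan}}$-flows lie in $\mathfrak{Is}_{\text{tan}}\subset\mathfrak{Is}_{\text{out}}$, that $\delta_M$ and $\|\cdot\|(M)$ transform functorially under such flows, and that Lemma 3.4 supplies the compactness its closed-manifold analogue supplies in \cite{Colding-DeLellis03} --- but gluing the locally defined vector fields into one globally continuous deformation while retaining monotone mass decrease and a continuous choice of $T(V)$ is the delicate part. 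The other genuinely technical point is the first step: smoothing the sweepouts to continuous ones without enlarging $\mathcal{F}$ or leaving the saturated class $\Lambda$.
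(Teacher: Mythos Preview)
Your overall strategy coincides with the paper's: invoke the perturbation lemma (Lemma~4.9) to pass to continuous sweepouts, then run the Colding--De Lellis pull-tight using vector fields in $\chi_{\text{tan}}$ so the isotopies lie in $\mathfrak{Is}_{\text{tan}}\subset\mathfrak{Is}_{\text{out}}$ and the deformed family stays in $\Lambda$. Your final deduction of (2), however, differs from the paper's. The paper records the quantitative drop $\|(\Psi(V)_1)_\sharp V\|(M)\le\|V\|(M)-L(d(V,\mathcal{V}^C_\infty(M)))$ together with a continuity function $\lambda$ comparing $d(\Sigma^n_t\cap M,\mathcal{V}^C_\infty)$ and $d(\Gamma^n_t\cap M,\mathcal{V}^C_\infty)$, and from these shows directly that almost-maximal tightened slices are $\epsilon$-close to $\mathcal{V}^C_\infty$; this is robust under the smoothing-in-$t$ (hence the factor $1/2$ in (5.1)). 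Your route instead squeezes to show the untightened slices $\Sigma^n_{t_n}\cap M$ are also min-max, passes to a weak limit $W$, and uses continuity of $\Psi(1,\cdot)$ plus the mass equality $\|\Psi(1,W)\|=\|W\|$ to force $W\in\mathcal{V}^L_\infty$. This is a legitimate alternative, but note that after smoothing you only have $\tilde{\Sigma}^n_{t_n}\llcorner_M\approx\Psi(1,\Sigma^n_{t_n}\llcorner_M)$, not equality, so the limit identification needs a word; the paper's quantitative version absorbs this more cleanly.

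One genuine inaccuracy: your sketch of step~(1) misrepresents the content of the perturbation lemma. The discontinuities of $t\mapsto\mathcal{H}^2(\Sigma_t\cap M)$ need not be finitely many, and the fix is not ``shaving off upward spikes'' at bad times. The paper's Lemma~4.9 instead perturbs the \emph{entire} sweepout by an outward isotopy built from parametric Morse theory on $d(\cdot,\partial M)|_{\Sigma_t}$, so that for every $t$ the surface meets $\partial M$ in a set of $\mathcal{H}^2$-measure zero; continuity then follows from Lemma~3.1. Also, Lemma~4.9 only guarantees $\mathcal{F}(\{\Sigma'_t\})\le\mathcal{F}(\{\Sigma_t\})+\epsilon$, not non-increase of $\mathcal{F}$; this is harmless for building a minimizing sequence but should be stated correctly.
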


The key new ingredient in the proposition above is a perturbation lemma which says that any sweepout can be approximated by a continuous sweepout.

\begin{lemma}
Given any sweepout $\{\Sigma_t\}_{t \in [0,1]} \in \Lambda$, and any $\epsilon >0$, there exists a continuous sweepout $\{\Sigma_t'\}_{t \in [0,1]} \in \Lambda$ such that 
\begin{equation*}
\mathcal{F}(\{\Sigma'_t\}) \leq \mathcal{F}(\{\Sigma_t\})+ \epsilon.
\end{equation*} 
\end{lemma}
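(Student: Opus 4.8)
The plan is to deform $\{\Sigma_t\}$ by a single ``outward push'' diffeomorphism of $\tilde M$. Fix a collar of $\partial M$ in $\tilde M$. We shall choose a smooth vector field $X \in \chi_{\text{out}}$, supported in the collar and $C^0$-small, let $\Phi$ be the time-one map of its flow, and set $\Sigma'_t := \Phi(\Sigma_t)$. By the discussion in \S 3.1 the flow of $X$ lies in $\mathfrak{Is}_{\text{out}}$, so the constant-in-$t$ deformation $\psi_t \equiv \Phi$ produces a sweepout $\{\Sigma'_t\} \in \Lambda$; since $\Phi$ is a diffeomorphism the exceptional point set $P$ of Definition 4.1 is merely replaced by $\Phi(P)$, of the same cardinality, so the uniform bound on $|P|$ required of a saturated collection is preserved. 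Write $M' := \Phi^{-1}(M)$ and $N := \partial M' = \Phi^{-1}(\partial M)$; since $X \in \chi_{\text{out}}$ we have $M \subset \Phi(M)$, hence $M' \subset M$, and $N$ is a hypersurface contained in $M$ and $C^1$-close to $\partial M$.

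The area bound is immediate. The diffeomorphism $\Phi$ distorts $2$-dimensional area by at most a factor $1+\delta$, with $\delta = \delta(X) \to 0$ as $X \to 0$ in $C^1$, so, using $M' \subset M$,
\[
\mathcal{H}^2(\Sigma'_t \cap M) = \mathcal{H}^2\big(\Phi(\Sigma_t \cap M')\big) \le (1+\delta)\,\mathcal{H}^2(\Sigma_t \cap M') \le (1+\delta)\,\mathcal{H}^2(\Sigma_t \cap M).
\]
Since $\mathcal{F}(\{\Sigma_t\}) < \infty$, taking $X$ small enough gives $\mathcal{F}(\{\Sigma'_t\}) \le (1+\delta)\,\mathcal{F}(\{\Sigma_t\}) \le \mathcal{F}(\{\Sigma_t\}) + \epsilon$. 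Note that so far only the $C^1$-size of $X$ has been constrained, not its direction, leaving room to also arrange continuity.

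The heart of the matter is to choose $X$ so that $\{\Sigma'_t\}$ is a \emph{continuous} sweepout, i.e.\ so that condition (4) of Definition 4.1 holds. For an arbitrary sweepout, $t \mapsto \mathcal{H}^2(\Sigma_t \cap M)$ is upper semicontinuous (Remark 4.3), and away from the finite set $T$ it can fail to be continuous at a time $t_0$ only when $\Sigma_{t_0}$ shares a subset of positive $\mathcal{H}^2$-measure with $\partial M$ --- call this \emph{clinging} --- because along any portion of $\Sigma_{t_0}$ transverse to $\partial M$ the region $\Sigma_t \cap M$ varies continuously under the graphical convergence $\Sigma_t \to \Sigma_{t_0}$; at the finitely many $t_0 \in T$ the same holds after deleting small neighbourhoods of the points $P$, whose contribution to the area tends to $0$ with their radius. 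Since $N = \Phi^{-1}(\partial M)$, the surface $\Sigma'_t = \Phi(\Sigma_t)$ clings to $\partial M$ exactly when $\Sigma_t$ clings to $N$. It therefore suffices to put $N$ in general position relative to the whole family $\{\Sigma_t\}_{t \in [0,1]}$. The slices with $t \notin T$ form a smooth one-parameter family of embedded surfaces; the condition that $x \in N \cap \Sigma_t$ with $T_x N = T_x \Sigma_t$ is a codimension-$3$ condition on the $1$-jet of $N$ at $x$, imposed over the $3$-dimensional set of admissible pairs $(x,t)$, so by Thom's parametric transversality theorem a generic $X$ (among $C^0$-small fields in $\chi_{\text{out}}$ supported in the collar) yields an $N$ whose tangency locus with the family is zero-dimensional --- in particular no $\Sigma_t$, $t \notin T$, clings to $N$. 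The finitely many singular slices $\Sigma_{t_0}$, $t_0 \in T$, are handled by the same argument applied to the smooth surfaces $\Sigma_{t_0} \setminus P$, together with the negligibility of area near $P$. For such an $X$ no $\Sigma'_t$ clings to $\partial M$, and combined with the graphical continuity of $t \mapsto \Sigma_t$ this makes $t \mapsto \mathcal{H}^2(\Sigma'_t \cap M)$ continuous; thus $\{\Sigma'_t\}$ is the required continuous sweepout in $\Lambda$.

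The main obstacle is this last step. Two points need genuine care: (a) proving that discontinuity of $t \mapsto \mathcal{H}^2(\Sigma_t \cap M)$ is caused exactly by $2$-dimensional clinging with $\partial M$, uniformly in the singular times $t \in T$, where one must localize away from $P$ and control the area near $P$; and (b) making the general-position choice of $N$ rigorous for the \emph{generalized} family $\{\Sigma_t\}$. For (b) it is essential to perturb in the full space of nearby hypersurfaces rather than among the inner parallel surfaces of $\partial M$ only: a fixed slice $\Sigma_t$ may cling to a parallel surface at each of countably many distances, and these bad distances cannot in general be avoided by one distance valid for all $t$ at once, so a genuine parametric-transversality (or Baire-category) argument is unavoidable. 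The tension between keeping $\Phi$ $C^1$-close to the identity --- needed for the area bound --- and generic --- needed to destroy all clinging simultaneously --- is the technical core; the remaining ingredients (membership in $\Lambda$, the area inequality, and the count of exceptional points) are routine.
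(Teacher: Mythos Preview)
Your outline is correct and takes a genuinely different route from the paper. Both reduce (the paper does so explicitly, via Allard 2.6.2(d) and Lemma~3.1) to arranging $\mathcal{H}^2(\Sigma'_t\cap\partial M)=0$ for every $t$; the paper then uses a \emph{time-dependent} perturbation, invoking a parametric Morse theorem on the restricted signed-distance functions $d|_{\Sigma_t}$ to find small $\phi_t\le 0$, smooth in $t$, with $d+\phi_t$ Morse on $\Sigma_t$ except at finitely many birth--death times, and flows along $X_t=\chi\,\phi_t\,\nabla d$. This forces two preliminary manoeuvres: first pushing the singular set $P$ off a collar of $\partial M$, and then partitioning $[0,1]$ into subintervals on which the $\Sigma_t$ are mutually diffeomorphic near $\partial M$, so that parametric Morse theory has a fixed domain on each piece. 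Your single time-independent $\Phi$ dispenses with both by perturbing $\partial M$ itself to a generic $N=\Phi^{-1}(\partial M)$ rather than perturbing the slices; this is conceptually tidier and makes membership in $\Lambda$ (and preservation of the bound on $|P|$) immediate.

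The transversality step you flag as the technical core can be closed as follows. In collar coordinates $(p,s)$ write $N=\{s=-h(p)\}$ for a small positive $h\in C^\infty(\partial M)$ and set $F_h(t,x)=s(x)+h(p(x))$ on the regular total space $\{(t,x):x\in\Sigma_t\setminus P\}$; the tangency locus is $\{F_h=0,\ \nabla^{\Sigma_t}F_h=0\}$. The evaluation $(h,t,x)\mapsto\big(F_h(x),\nabla^{\Sigma_t}F_h(x)\big)$ is a submersion onto $\mathbb R\oplus T_x\Sigma_t$ wherever the collar projection $p|_{\Sigma_t}$ has rank $2$ at $x$, since varying the $1$-jet of $h$ at $p(x)$ then moves the target freely; where the rank drops, $\partial_s\in T_x\Sigma_t$ and $\nabla^{\Sigma_t}F_h\cdot\partial_s=1\neq 0$, so such points are never tangencies and may be discarded. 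Sard--Smale in the $h$-variable then yields a residual set of good $h$, and intersecting with the open $C^1$-ball required for the area estimate (and with the finitely many further residual conditions coming from the fixed slices $\Sigma_{t_0}\setminus P$, $t_0\in T$) produces your $X$. With this filled in, your argument is complete; the paper trades this submersion check for an appeal to the parametric Morse theorem, at the cost of the extra partition-and-patch bookkeeping.
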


The proof of Lemma 4.9 is rather technical and will be presented in Appendix B. Using the perturbation lemma, one can assume, without loss of generality, that a minimizing sequence of sweepouts is continuous. Once we have continuity, the proof of Proposition 4.8 is a simple modification of the arguments in section 4 of \cite{Colding-DeLellis03}. We will give the details in section 5 of this paper.

The second part of the proof of Theorem 4.7 is to establish the existence of a min-max sequence which is outer almost minimizing on small annuli. Let $x \in \tilde{M}$ and $r>0$. Let $\mathcal{A}_r(x)$ be the collection of all annulus centered at $x$ with outer radius less than $r$ and inner radius greater than zero. We will prove the following existence result in section 6. A key note is that the proof requires continuity of the minimizing sequence of sweepout, which is given by Proposition 4.8.

\begin{proposition}
Given a saturated collection $\Lambda$, there exists a positive function $r:\tilde{M} \to \mathbb{R}$ and a min-max sequence of surfaces  $\{\Sigma^n=\Sigma^n_{t_n} \cap M\}_{n \in \mathbb{N}}$ such that 
\begin{itemize}
	\item[(1)] $\{\Sigma^n\}_{n \in \mathbb{N}}$ is an outer almost minimizing sequence in any annulus An $\in \mathcal{A}_{r(x)}(x)$, where $x$ is any point in $\tilde{M}$;
	\item[(2)] In every such annulus An, $\Sigma^n$ is a smooth surface (possibly with boundary) when $n$ is sufficiently large;
	\item[(3)] The sequence $\Sigma^n$ converges to a freely stationary varifold $V$ in $M$ as $n \to \infty$.
\end{itemize}
\end{proposition}

The third part of the proof of Theorem 4.7 is a regularity theorem for outer almost minimizing varifolds. The idea is that the outer almost minimizing property enables us to construct replacements (see Definition 8.1) for the freely stationary varifold $V$ obtained in Proposition 4.10. It turns out that having sufficiently many replacements implies that a freely stationary varifold is regular.

\begin{proposition}
The freely stationary varifold $V$ in Proposition 4.10 is integer rectifiable and there exists natural numbers $n_1,\ldots,n_k$ and smooth compact properly embedded minimal surfaces $\Gamma_1,\ldots,\Gamma_k$ such that 
\begin{equation*}
V=\sum_{i=1}^k n_i \Gamma_i ,
\end{equation*} 
where each $\Gamma_i$ is either closed or meets $\partial M$ orthogonally along the free boundary $\partial \Gamma_i$. 
\end{proposition}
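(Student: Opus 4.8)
The plan is to follow the replacement method of Pitts, in the streamlined form of Colding--De Lellis \cite{Colding-DeLellis03} (Sections 6--7), splitting the analysis into the interior of $M$ and a neighborhood of $\partial M$, the latter being the genuinely new part. Throughout we work with the freely stationary limit $V$ and the outer almost minimizing sequence $\{\Sigma^n\}$ furnished by Proposition 4.10, together with the positive function $r:\tilde M\to\mathbb{R}$ controlling the size of the annuli on which the outer almost minimizing property holds.

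\emph{Interior regularity.} For $x\in\operatorname{int}(M)$ and $r$ small enough that $B_r(x)\cap\partial M=\emptyset$, every isotopy supported in $B_r(x)$ automatically lies in $\mathfrak{Is}_{\text{out}}$ (indeed in $\mathfrak{Is}_{\text{tan}}$), and area in $M$ coincides with area in $\tilde M$ on such balls; hence the outer almost minimizing property in annuli of $\mathcal{AN}_{r(x)}(x)$ reduces exactly to the almost minimizing notion of \cite{Colding-DeLellis03}. Thus the interior regularity theorem of \cite{Colding-DeLellis03} applies verbatim, using the Schoen--Simon--Yau curvature estimates \cite{Schoen-Simon-Yau75} and the construction of replacements by area minimization in an isotopy class à la Meeks--Simon--Yau \cite{Meeks-Simon-Yau82}: $V\llcorner\operatorname{int}(M)$ is integer rectifiable, its support is a smooth embedded minimal surface, and the constancy theorem gives integer multiplicities on each component.

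\emph{Boundary regularity.} Fix $x\in\partial M$. I would introduce the notion of a \emph{replacement} (Definition 8.1): a varifold $V^{*}$ which is still freely stationary and still outer almost minimizing in $\mathcal{AN}_{r(x)}(x)$, equal to $V$ outside some annulus $\mathrm{An}\in\mathcal{AN}_{r(x)}(x)$, with $\|V^{*}\|(M)=\|V\|(M)$, and such that $V^{*}\llcorner\mathrm{An}$ is the varifold of a smooth, stable, \emph{properly} embedded minimal surface with free boundary on $\partial M\cap\mathrm{An}$. Existence of replacements is where the outer almost minimizing property and the partially free boundary minimization problem of Section 7 enter: one minimizes area in $M$ among all competitors obtained from $\Sigma^{n}\llcorner\mathrm{An}$ by isotopies in $\mathfrak{Is}_{\text{out}}(\mathrm{An})$ fixing the trace on $\partial\mathrm{An}$, lets $n\to\infty$, and identifies the limit as a smooth stable free boundary solution using the compactness theorem for stable free-boundary minimal surfaces (Lemma 3.6, \cite{Gruter-Jost86a}) together with the Gr\"uter--Jost monotonicity and Allard-type boundary regularity \cite{Gruter-Jost86} (inequality (3.2) gives a well-defined density $\theta(\cdot,V)$ at boundary points). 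The decisive gain over the merely freely stationary setting — and the reason the equatorial-disk example of Section 2 is excluded — is that the minimizer is properly embedded: if its interior met $\partial M$ at a point where $\partial M$ is not mean convex with respect to the inward normal, an outward push supported in $\mathrm{An}$ would strictly decrease area in $M$ while staying in $\mathfrak{Is}_{\text{out}}(\mathrm{An})$, contradicting minimality; near mean-convex boundary points the maximum principle applies directly.

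\emph{From replacements to regularity of $V$.} Having a replacement $V^{*}$ on one annulus, I would construct a replacement $V^{**}$ of $V^{*}$ on an overlapping annulus and iterate; each piece is a smooth stable free-boundary minimal surface, and on an overlap two such pieces share a boundary arc on $\partial M$ along which they agree to first order, so a free-boundary version of Schoen's maximum principle (the orthogonality being forced by (1.1)) together with unique continuation glue them into a single smooth properly embedded minimal surface $\Gamma$ meeting $\partial M$ orthogonally along $\partial\Gamma$; the density $\theta(\cdot,V)$, well-defined by (3.2), is then shown to be integer valued and locally constant, so $\|V\|$ restricted to a small half-ball about $x$ equals $n\,\|\Gamma\|$. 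Combining the interior and boundary descriptions, covering $M$ by finitely many such neighborhoods (compactness of $M$ and positivity of $r(\cdot)$), and using $\|V\|(M)=m_0(\Lambda)$ to see that only finitely many components occur, yields $V=\sum_{i=1}^{k}n_{i}\Gamma_{i}$ with each $\Gamma_{i}$ either closed or a free boundary solution. The hardest step will be the existence and full boundary regularity of the replacements near $\partial M$ — precisely the partially free boundary minimization of Section 7 — together with the need to carry out the whole replacement scheme using only the restricted class $\mathfrak{Is}_{\text{out}}$ of outward isotopies, which breaks the symmetry present in \cite{Colding-DeLellis03} but is exactly what guarantees proper embeddedness; the blow-up analysis at free boundary points (identifying tangent varifolds as multiplicity-one half-planes orthogonal to $\partial M$) and the free-boundary maximum principle used in the gluing are the other substantial ingredients.
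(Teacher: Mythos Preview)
Your overall strategy matches the paper's --- construct replacements via the partially free boundary minimization of Section 7, verify the good replacement property (Definition 8.2), and apply the regularity Theorem 8.3 --- but two technical steps are misdescribed.

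First, the replacement must be built by minimizing in the \emph{constrained} class
\[
\mathfrak{Is}_j(\mathrm{An})=\Bigl\{\{\varphi_s\}\in\mathfrak{Is}_{\text{out}}(\mathrm{An}):\mathcal{H}^2(\varphi_s(\Sigma^j)\cap M)\le\mathcal{H}^2(\Sigma^j\cap M)+\tfrac{1}{8j}\ \forall s\Bigr\},
\]
not in the full $\mathfrak{Is}_{\text{out}}(\mathrm{An})$ (Lemma 8.7). Unconstrained minimization may produce a $V^*$ that is no longer a limit of an outer almost minimizing sequence, and then you cannot iterate to obtain $V''$ and verify (b), (c) of Definition 8.2. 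The paper minimizes in $\mathfrak{Is}_j$ and then shows, by a rescaling argument (the ``Fact'' in Lemma 8.7, analogous to Lemma 7.6 of \cite{Colding-DeLellis03}), that locally such minimizers also solve the unconstrained problem $(\Sigma,\mathfrak{Is}_{\text{out}}(B_\epsilon(x)))$, so that Theorem 7.3 applies. Relatedly, proper embeddedness of the replacement is not obtained by your mean-convexity dichotomy and an ad hoc outward push; it is the conclusion of Theorem 7.3, which goes through the $\gamma$-reduction machinery to reduce to genus-zero pieces and then invokes Jost's Theorem 7.13.

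Second, the gluing of $\Sigma'$ and $\Sigma''$ takes place along the sphere $\partial B_t(x)$, not along an arc of $\partial M$; each piece separately already carries a free boundary on $\partial M$. The genuinely new boundary step is the gluing at the single point $y\in\partial B_t(x)\cap\partial M$, and the paper handles it by writing both pieces as graphs over a half-disk, reflecting across $\partial M$, and applying interior elliptic regularity for the minimal surface equation --- not by a free-boundary maximum principle or unique continuation along $\partial M$. Finally, the tangent cones at $x\in\partial M$ are \emph{integer multiples} of half-planes orthogonal to $T_x\partial M$ (Proposition 8.6), not multiplicity one; the reduction to a single sheet at the center point comes only at the very end via the Hopf boundary lemma, with the integer multiplicity $m_1$ possibly exceeding one.
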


The construction of replacements involves a minimization problem among all surfaces which are outward isotopic to a fixed surface. This is a localized version of Meeks-Simon-Yau's paper \cite{Meeks-Simon-Yau82} with partially free boundary. In section 7, we will treat this minimization problem in detail. In section 8, we prove the regularity of freely stationary varifolds which can be replaced sufficiently many times. Combining Proposition 4.8, 4.10 and 4.11, we obtain the main convergence result (Theorem 4.7).


\section{Existence of freely stationary varifolds}

In this section, we show that there exists a nice minimizing sequence of sweepout such that \emph{any} min-max sequence of surfaces obtained from such a minimizing sequence has a subsequence converging to a varifold in $M$ which is freely stationary. Using the perturbation lemma (Lemma 4.9), we can make the minimizing sequence continuous. This is essential in the proof of Proposition 4.8 and Proposition 4.10 in the next section.

We restate Proposition 4.8 below.

\begin{proposition}[Proposition 4.8]
Given a saturated collection $\Lambda$ of sweepouts, there exists a minimizing sequence of sweepouts $\{ \Sigma^n_t\}_{n \in \mathbb{N}}$ such that
\begin{itemize} 
\item[(1)] $\{ \Sigma^n_t\}_{t \in [0,1]}$ is a continuous sweepout for each $n \in \mathbb{N}$.
\item[(2)] Every min-max sequence of surfaces $\{\Sigma^n_{t_n} \cap M\}_{n \in \mathbb{N}}$ constructed from such a minimizing sequence has a subsequence converging weakly to a freely stationary varifold $V \in \mathcal{V}_\infty$ supported in $M$. 
\end{itemize}
\end{proposition}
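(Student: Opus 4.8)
The plan is to adapt the tightening (``pull-tight'') argument from section 4 of Colding--De Lellis to the free boundary setting, using $\mathfrak{Is}_{\text{tan}}$ in place of unrestricted isotopies and area-in-$M$ in place of total area. First I would reduce to continuous sweepouts: by Lemma 4.9, given any minimizing sequence $\{\Sigma^n_t\}$ I can replace each sweepout by a continuous one, worsening $\mathcal{F}$ by at most $1/n$, so the new sequence is still minimizing and every slice has $\mathcal{H}^2(\Sigma^n_t\cap M)$ continuous in $t$; this settles part (1) and, by Remark 4.4, makes $\{\Sigma^n_t\cap M\}$ a continuous family of varifolds in $M$. From now on I work only with continuous sweepouts, and the quantity to be made small along almost-maximal slices is $\delta_M V(X)$ for $X\in\chi_{\text{tan}}$.

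The core construction is a tightening map. Fix a countable dense set $\{X_k\}$ in the unit ball of $\chi_{\text{tan}}$ (with respect to a fixed $C^1$ norm) and, for a varifold $V$ supported in $M$ that is \emph{not} freely stationary, define a ``descent direction'' by choosing (in a way that depends continuously on $V$ in the weak topology, e.g.\ via a partition-of-unity/convex-combination argument over finitely many of the $X_k$ as in Colding--De Lellis) a vector field $X_V\in\chi_{\text{tan}}$ with $\delta_M V(X_V)\le -c(V)<0$, normalized so that its time-one flow $\Phi_V$ satisfies $\|(\Phi_V)_\sharp V\|(M)\le \|V\|(M)-h(\|V\|(M))$ for a fixed continuous ``gap'' function $h$ that is positive exactly on the mass range of interest and vanishes on the freely stationary varifolds. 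Because the $X_V$ lie in $\chi_{\text{tan}}$, the flows lie in $\mathfrak{Is}_{\text{tan}}\subset\mathfrak{Is}_{\text{out}}$, so applying them slicewise to a sweepout yields an admissible deformation and keeps us inside the saturated collection $\Lambda$; continuity of $V\mapsto X_V$ together with continuity of the sweepout in $t$ gives that the deformed family is again a (continuous) sweepout, and the map $\Psi:\{\Sigma_t\}\mapsto\{\Phi_{\Sigma_t\cap M}(\Sigma_t)\}$ strictly decreases area-in-$M$ on any slice whose area is bounded below away from $m_0(\Lambda)$-realizable freely stationary values, while never increasing it by more than a controlled amount. Iterating/interpolating this map on a minimizing sequence, exactly as in \cite{Colding-DeLellis03}, produces a new minimizing sequence with the property that along any sequence of slices $\Sigma^n_{t_n}$ with $\mathcal{H}^2(\Sigma^n_{t_n}\cap M)\to m_0(\Lambda)$, one has $\delta_M(\Sigma^n_{t_n}\cap M)(X_k)\to 0$ for every $k$.

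To finish part (2): let $\{\Sigma^n_{t_n}\cap M\}$ be any min-max sequence from this tightened minimizing sequence. Since $\|\Sigma^n_{t_n}\cap M\|(M)\to m_0(\Lambda)$ is bounded, compactness of mass-bounded varifolds gives a subsequence converging weakly to some $V\in\mathcal{V}(M)$, and by upper semicontinuity of mass-in-$M$ together with the min-max bound, $\|V\|(M)=m_0(\Lambda)$ (so Lemma 3.1 applies and $\Sigma^n_{t_n}\llcorner_M\to V\llcorner_M=V$). For any $X\in\chi_{\text{tan}}$ supported anywhere, approximate $X$ in $C^1$ by the $X_k$; since $\delta_M W(X)$ depends continuously on $(W,X)$ via formula (3.1) and $\delta_M(\Sigma^n_{t_n}\cap M)(X_k)\to 0$, we get $\delta_M V(X)=0$, i.e.\ $V$ is freely stationary. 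By Remark 3.4, $V$ supported in $M$ is freely stationary, so $V\in\mathcal{V}_\infty$ as claimed.

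The main obstacle is the continuity of the tightening map with respect to the two distinct notions of convergence at play: one needs $V\mapsto X_V$ (hence $V\mapsto\Phi_V$) to be continuous in the weak topology while the discontinuity set (non-freely-stationary varifolds with mass in the relevant band) is handled so that the deformed family of surfaces still varies smoothly in $t$ in the sense of Definition 4.1, respects the bound $N(\Lambda)$ on the number of singular points, and stays in $\mathfrak{Is}_{\text{out}}$. This is the technically delicate point in \cite{Colding-DeLellis03}, and here it carries the extra wrinkle that we must (i) use only tangential vector fields, so the flows genuinely preserve $M$ and $\|\cdot\|(M)$ is differentiable along them via (3.1), and (ii) check that the perturbation to continuous sweepouts from Lemma 4.9 is compatible with the tightening iteration (e.g.\ alternate a tightening step with a re-continuization step, controlling the errors so the sequence remains minimizing). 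Everything else is a routine transcription of the closed-manifold argument with ``area'' replaced by ``area in $M$'' and $\chi$ replaced by $\chi_{\text{tan}}$.
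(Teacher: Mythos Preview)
Your proposal is correct and follows essentially the same route as the paper: reduce to continuous sweepouts via Lemma 4.9, then run the Colding--De Lellis pull-tight construction restricted to $\chi_{\text{tan}}$, so that almost-maximal slices become close to $\mathcal{V}_\infty^C(M)$ and any min-max subsequential limit is freely stationary. One unnecessary complication: your worry (ii) about alternating tightening with re-continuization is not needed, since the tightening uses only tangential isotopies, which preserve $M$ and hence preserve continuity of $t\mapsto\mathcal{H}^2(\Sigma_t\cap M)$; the paper makes exactly this observation and applies Lemma 4.9 only once, before tightening.
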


\begin{proof}
Let $\{\Sigma^n_t\}_{n \in \mathbb{N}} \subset \Lambda$ be a minimizing sequence of sweepouts. By Lemma 4.9, we can assume $\{\Sigma^n_t\}_{t \in [0,1]}$ is a continuous sweepout for each $n$. So (1) is established. 

Fix some $C>4m_0$. By Lemma 3.4, $\mathcal{V}^C_\infty(M) \subset \mathcal{V}^C(M)$ is a compact set in the weak topology. Let $d$ be a metric on $\mathcal{V}^C(M)$ whose metric topology agrees with the weak topology. By restricting ourselves to tangential vector fields in $\chi_{\text{tan}}$, the same argument as in section 4 of \cite{Colding-DeLellis03} gives  a ``tightening'' map 
\begin{equation*}
\Psi: \mathcal{V}^C(M) \to \mathfrak{Is}_{\text{tan}}
\end{equation*} 
such that 
\begin{itemize}
\item[(a)] $\Psi$ is continuous with respect to the weak topology on $\mathcal{V}^C(M)$ and the $C^1$-norm on $\mathfrak{Is}_{\text{tan}}$.
\item[(b)] If $V \in \mathcal{V}^C_\infty(M)$, then $\Psi(V)$ is the identity isotopy on $\tilde{M}$.
\item[(c)] If $V \notin \mathcal{V}^C_\infty(M)$, then 
\begin{equation*}
\|(\Psi(V)_1)_\sharp V\|(M) \leq \|V\|(M) - L(d(V,\mathcal{V}^C_\infty(M))
\end{equation*} 
for some continuous strictly increasing function $L:\mathbb{R} \to \mathbb{R}$ with $L(0)=0$. 
\end{itemize}

\begin{remark}
In step 1 of section 4 of \cite{Colding-DeLellis03}, we should take $\|\chi_V\|_\infty \leq 1/k$ when $k>0$ to make sure that $\Psi$ is continuous. The rest of the argument goes through because the set of tangential vector fields $\chi_{\text{tan}}$ is a convex subset of the set of all vector fields $\chi$ in $\tilde{M}$. Hence, the vector field $H_V$ defined in step 1 of section 4 of \cite{Colding-DeLellis03} also belongs to $\chi_{\text{tan}}$.
\end{remark}

Since $\{\Sigma^n_t\}$ are continuous sweepouts, for each $n$, $\{\Sigma^n_t \cap M\}_{t \in [0,1]}$ is a continuous family in $\mathcal{V}^C(M)$. Therefore, $\Psi(\Sigma_t \cap M)$ is a continuous family in $\mathfrak{Is}_{\text{tan}}$. By a smoothing argument (for example, one can take a convolution in the $t$-variable), we can make it a smooth family. We use these tangential isotopies to deform the minimizing sequence $\{\Sigma^n_t\}$ to another minimizing sequence $\{\Gamma^n_t\}$ which satisfies 
\begin{equation}
\mathcal{H}^2(\Gamma^n_t \cap M) \leq \mathcal{H}^2(\Sigma^n_t \cap M) - L(d(\Sigma^n_t \cap M, \mathcal{V}_\infty^C(M)))/2.
\end{equation}
As $\{\Sigma^n_t\}_n$ is a minimizing sequence of sweepouts, we can assume that 
\begin{equation}
\mathcal{F}(\{\Sigma^n_t\}) \leq m_0 + 1/n.
\end{equation}
Furthermore, the sweepouts $\{\Gamma^n_t\}$ are continuous since only tangential vector fields are used in the deformations. 

Next, we claim that for every $\epsilon >0$, there exist $\delta>0$ and $N \in \mathbb{N}$ such that whenever $n>N$ and $\mathcal{H}^2(\Gamma^n_{t_n} \cap M) > m_0 -\delta$, we have $d(\Gamma^n_{t_n} \cap M,\mathcal{V}_\infty^C(M)) < \epsilon$. To see this, we argue by contradiction. Note first that the construction of the tightening map yields a continuous and increasing function $\lambda:\mathbb{R}^+ \to \mathbb{R}^+$ (independent of $t$ and $n$) such that $\lambda(0)=0$ and 
\begin{equation}
d(\Sigma_t \cap M, \mathcal{V}_\infty^C(M)) \geq \lambda(d(\Gamma_t^n \cap M,\mathcal{V}_\infty^C(M))).
\end{equation}
Fix $\epsilon >0$ and choose $\delta>0$, $N \in \mathbb{N}$ such that $L(\lambda(\epsilon))/2 -\delta >1/N$. We claim that for this choice of $\delta$ and $N$, whenever $n>N$ and $\mathcal{H}^2(\Gamma^n_{t_n} \cap M) > m_0 -\delta$, we have $d(\Gamma^n_{t_n} \cap M,\mathcal{V}_\infty^C(M)) < \epsilon$. Suppose not. Then there are $n>N$ and $t_n$ such that $\mathcal{H}^2(\Gamma^n_{t_n} \cap M) > m_0 -\delta$ and $d(\Gamma_t^n \cap M,\mathcal{V}_\infty^C(M)) \geq \epsilon$. Hence, from (5.1) and (5.3) we get 
\begin{equation*}
\mathcal{H}^2(\Sigma^n_t \cap M) \geq \mathcal{H}^2(\Gamma^n_t \cap M) +\frac{L(\lambda(\epsilon))}{2} > m_0 + \frac{L(\lambda(\epsilon))}{2}-\delta > m_0+\frac{1}{N} > m_0 + \frac{1}{n}.
\end{equation*}
This contradicts (5.2). This proves our claim and the claim clearly implies (2) in Proposition 5.1. Therefore, the proof is completed.
\end{proof}


\section{Existence of outer almost minimizing varifolds}

In this section, we prove the existence of a min-max sequence which is outer almost minimizing on small annuli. The proof is a combinatorial argument first introduced by F. Almgren in \cite{Almgren65}. First we recall the following definition from \cite{Colding-DeLellis03}.

\begin{definition}
Let $\mathcal{CO}$ be the set of pairs $(U^1,U^2)$ of open sets in $\tilde{M}$ with 
\begin{equation*}
d(U^1,U^2) > 2 \min \{\text{diam}(U^1),\text{diam}(U^2)\}.
\end{equation*}
Given $(U^1,U^2) \in \mathcal{CO}$, we say that $V \in \mathcal{V}(\tilde{M})$ is \emph{$\epsilon$-outer almost minimizing in $(U^1,U^2)$} if it is $\epsilon$-outer almost minimizing in at least one of the $U^1$ or $U^2$.
\end{definition}

\begin{remark}
The significance of $\mathcal{CO}$ is that for any $(U^1,U^2)$ and $(V^1,V^2) \in \mathcal{CO}$, there are some $i,j=1,2$ with $d(U^i,V^j)>0$, hence $U^i \cap V^j =\emptyset$. 
\end{remark}

The key lemma in this section is the following.

\begin{lemma}
Let $\{\Sigma^n_t\}$ be a minimizing sequence as given in Proposition 5.1. Then there is a min-max sequence $\{\Sigma^L\}_{L \in \mathbb{N}} = \{ \Sigma^{n(L)}_{t_{n(L)}} \}_{L \in \mathbb{N}}$ such that 
\begin{equation*} 
\text{each }\Sigma^L \text{ is } \frac{1}{L}\text{-outer almost minimizing in every } (U^1,U^2) \in \mathcal{CO}. 
\end{equation*}
\end{lemma}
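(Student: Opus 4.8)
The plan is to adapt Almgren's combinatorial pull-tight argument (as presented in section 5 of \cite{Colding-DeLellis03}), with the modification that all deformations are carried out using isotopies in $\mathfrak{Is}_{\text{out}}$ rather than arbitrary isotopies, and all areas are measured as $\mathcal{H}^2(\cdot \cap M)$. We argue by contradiction: suppose no such min-max sequence exists. Then, after reformulating, for every min-max sequence $\{\Sigma^{n(L)}_{t_{n(L)}}\}$ and every $L$, infinitely many slices fail to be $\frac{1}{L}$-outer almost minimizing in some pair in $\mathcal{CO}$. The strategy is to show that this failure can be ``localized and propagated'' to give, for suitable $L$, a competitor sweepout in $\Lambda$ with maximal slice area strictly below $m_0 - c$ for some fixed $c>0$, contradicting the definition of $m_0$.

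The key steps, in order, are as follows. First, I would set up the finite combinatorial structure: since by Remark 4.5 the set $P$ in each sweepout of $\Lambda$ has at most $N = N(\Lambda)$ points, one can cover $\tilde{M}$ by finitely many pairs of concentric balls in a way that any surface fails outer almost minimizing ``at macroscopic scale'' only through finitely many of these pairs, and elements of $\mathcal{CO}$ can be resolved into disjoint pieces by Remark 6.3. Second, I would define, for each $L$, the set of ``good'' slices $t$ (those for which $\Sigma^n_t \cap M$ is $\frac{1}{L}$-outer almost minimizing in a cofinite collection of pairs) and show that if no good almost-maximal slice exists, one can cover the interval $[0,1]$ of parameters by finitely many subintervals on each of which the sweepout can be deformed downward in area using a fixed $\mathfrak{Is}_{\text{out}}(U)$-isotopy supported in one of the balls, with a uniform area gain. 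Third — the crux — I would splice these local deformations together continuously in $t$: this requires that on the overlaps of the parameter subintervals, the competing local isotopies are supported in pairs of open sets that, by the defining property of $\mathcal{CO}$ (Remark 6.3), can be taken disjoint, so the deformations commute and can be interpolated. The continuity of the minimizing sweepout (Proposition 5.1(1)) is essential here: it guarantees that the area function $t \mapsto \mathcal{H}^2(\Gamma^n_t \cap M)$ is continuous, so the interpolated family is still a legitimate (continuous) sweepout in $\Lambda$ and the maximal slice area is controlled. One must also check the two quantitative conditions (1) and (2) in Definition 3.6 are preserved under composition and interpolation of outward isotopies — the ``$\epsilon/8$ threshold'' is what makes the bookkeeping close.

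I expect the main obstacle to be exactly this splicing/interpolation step, for two reasons specific to the free boundary setting. One is that we are restricted to $\mathfrak{Is}_{\text{out}}(U)$: when interpolating between an outward isotopy and the identity (or between two outward isotopies) over a parameter overlap, one must verify the interpolated family still lies in $\mathfrak{Is}_{\text{out}}$, i.e. still satisfies $M \subset \varphi_s(M)$; this is not automatic for convex combinations of diffeomorphisms, so one works at the level of vector fields in $\chi_{\text{out}}$ (which, unlike $\chi_{\text{tan}}$, is a convex cone, as noted implicitly around the definition of $\chi_{\text{out}}$) and integrates, being careful that the generated flows stay outward. The second subtlety is that ``area in $M$'', $\mathcal{H}^2(\cdot \cap M)$, is only upper semicontinuous under weak limits and behaves less rigidly than area in a closed manifold; but since we are deforming genuine smooth surfaces by ambient isotopies (not passing to varifold limits yet) and the sweepouts are continuous, Lemma 3.1 and condition (4) of Definition 4.1 let us treat $\mathcal{H}^2(\cdot \cap M)$ as a continuous quantity along each deformation, so this issue is manageable. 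The remaining parts — the finite covering, the diagonal extraction of the subsequence $\{\Sigma^L\}$, and the final contradiction with $m_0(\Lambda)$ — follow the Almgren--Colding--De Lellis template essentially verbatim once the outward-isotopy bookkeeping is in place.
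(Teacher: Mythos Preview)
Your proposal is essentially correct and follows the same route as the paper: contradiction, Almgren's combinatorial argument as in section~5 of \cite{Colding-DeLellis03}, with area replaced by $\mathcal{H}^2(\cdot\cap M)$ and isotopies restricted to $\mathfrak{Is}_{\text{out}}$. Two small corrections to your outline are worth making. First, the bound $N(\Lambda)$ on the singular set $P$ from Remark~4.4 plays no role here; the finite combinatorial structure comes instead from the compactness of the set $K_n=\{t:\mathcal{H}^2(\Sigma^n_t\cap M)\ge m_0-1/L\}$ of big slices (this is exactly where continuity of the sweepout is used), which one covers by finitely many parameter intervals, not by covering $\tilde{M}$. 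Second, your worry about keeping interpolated isotopies in $\mathfrak{Is}_{\text{out}}$ is lighter than you suggest: the splicing in \cite{Colding-DeLellis03} is done by reparametrizing $s\mapsto\varphi_{\eta(t)s}$ and by composing isotopies with disjoint supports, both of which manifestly preserve the condition $M\subset\varphi_s(M)$; no convex combination of diffeomorphisms or vector fields is needed. The paper's only genuinely new wrinkle beyond \cite{Colding-DeLellis03} is the verification (its Claim~2) that the area-decreasing isotopies at a slice $t$ still decrease area for nearby slices $\tau$, which requires the upper semicontinuity of $\mathcal{H}^2(\cdot\cap M)$ together with the continuity of the sweepout --- precisely the point you flag in your last paragraph.
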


\begin{proof}
We will argue by contradiction. First of all, we fix a minimizing sequence $\{ \Sigma^n_t\}_{n \in \mathbb{N}} \subset \Lambda$  satisfying Proposition 5.1 and such that 
\begin{equation}
\mathcal{F}(\{\Sigma^n_t\}) < m_0 +\frac{1}{n}. 
\end{equation}

Fix $L \in \mathbb{N}$. To prove the lemma, we make the following claim.

\textit{Claim 1:} There exists $n >L$ and $t_n \in [0,1]$ such that $\Sigma^n=\Sigma^n_{t_n}$ satisfies
\begin{itemize} 
\item[(a)] $\Sigma^n$ is $\frac{1}{L}$-outer almost minimizing in every $(U^1,U^2) \in \mathcal{CO}$.
\item[(b)] $\mathcal{H}^2(\Sigma^n \cap M) \geq m_0 -\frac{1}{L}$. 
\end{itemize}

\textit{Proof of Claim 1:} We define the sets of ``big slices'' for each $n>L$ by
\begin{equation*} 
K_n= \left\{ t \in [0,1] \; : \; \mathcal{H}^2(\Sigma^n_t \cap M) \geq m_0 -\frac{1}{L} \right\}. 
\end{equation*}
Note that $K_n$ is compact since $\{\Sigma^n_t\}$ is a continuous sweepout (by (4) in Definition 4.1). If Claim 1 is false, then for every $t \in K_n$, there exists a pair of open sets $(U^{1,t},U^{2,t}) \in \mathcal{CO}$ such that $\Sigma^n_t$ is not $\frac{1}{L}$-outer almost minimizing in any one of them. So for every $t \in K_n$, there exists isotopies $\{\varphi^{1,t}_s\}_{s \in [0,1]} \in \mathfrak{Is}_{\text{out}}(U^{1,t})$ and $\{\varphi^{2,t}_s\}_{s \in [0,1]} \in \mathfrak{Is}_{\text{out}}(U^{2,t})$ such that for $i=1,2$,
\begin{itemize}
	\item[(1)] $\mathcal{H}^2(\varphi^{i,t}_s(\Sigma^n_t)\cap M) \leq \mathcal{H}^2(\Sigma^n_t \cap M)+\frac{1}{8L}$ for every $s \in [0,1]$;
	\item[(2)] $\mathcal{H}^2(\varphi^{i,t}_1(\Sigma^n_t)\cap M) \leq \mathcal{H}^2(\Sigma^n_t \cap M)-\frac{1}{L}$.
\end{itemize} 

Next, we want to establish the following claim. 

\textit{Claim 2:} For each $t \in K_n$, there exists $\delta=\delta(t)>0$ such that if $|\tau-t|<\delta$, then for $i=1,2$, 
\begin{itemize}
	\item[(1')] $\mathcal{H}^2(\varphi^{i,t}_s(\Sigma^n_\tau) \cap M) \leq \mathcal{H}^2(\Sigma^n_\tau \cap M)+\frac{1}{4L}$ for every $s \in [0,1]$;
	\item[(2')] $\mathcal{H}^2(\varphi^{i,t}_1(\Sigma^n_\tau) \cap M) \leq \mathcal{H}^2(\Sigma^n_\tau \cap M)-\frac{1}{2L}$.
\end{itemize}

\textit{Proof of Claim 2:} To see why (1') is true, we argue by contradiction. Suppose no such $\delta$ exists, then there exists a sequence $\tau_j \to t$ and $s_j \in [0,1]$ such that for all $j$, 
\begin{equation*} 
\mathcal{H}^2(\varphi^{i,t}_{s_j}(\Sigma^n_{\tau_j}) \cap M) > \mathcal{H}^2(\Sigma^n_{\tau_j} \cap M)+\frac{1}{4L}. 
\end{equation*}
After passing to a subsequence, we can assume that $s_j \to s_0$ for some $s_0 \in [0,1]$. Observe that $\varphi^{i,t}_{s_j}(\Sigma^n_{\tau_j})$ converges weakly as varifolds to $\varphi^{i,t}_{s_0}(\Sigma^n_t)$ as $j \to \infty$. By (2.6.2(c)) in \cite{Allard72} and the fact that the sweepouts $\{\Sigma^n_t\}$ are continuous, we have 
\begin{align*} 
\mathcal{H}^2(\varphi^{i,t}_{s_0}(\Sigma^n_t) \cap M) &\geq \limsup_{j \to \infty} \mathcal{H}^2(\varphi^{i,t}_{s_j}(\Sigma^n_{\tau_j}) \cap M) \\
&\geq \lim_{j \to \infty} \mathcal{H}^2(\Sigma^n_{\tau_j} \cap M)+\frac{1}{4L} \\
&=\mathcal{H}^2(\Sigma^n_t \cap M)+\frac{1}{4L}.
\end{align*}
This contradicts (1) above. So we can choose $\delta>0$ such that (1') holds.

The proof of (2') is similar. Again, if no such $\delta>0$ exists, then there exists a sequence $\tau_j \to t$ such that for all $j$,
\begin{equation*} 
\mathcal{H}^2(\varphi^{i,t}_1(\Sigma^n_{\tau_j}) \cap M) > \mathcal{H}^2(\Sigma^n_{\tau_j} \cap M)-\frac{1}{2L}. 
\end{equation*}
Since $\varphi^{i,t}_1(\Sigma^n_{\tau_j})$ converges weakly to $\varphi^{i,t}_1(\Sigma^n_t)$ in $\tilde{M}$ as $j \to \infty$, thus, we have 
\begin{align*} 
\mathcal{H}^2(\varphi^{i,t}_1(\Sigma^n_t) \cap M) &\geq \limsup_{j \to \infty} \mathcal{H}^2(\varphi^{i,t}_1(\Sigma^n_{\tau_j}) \cap M) \\
&\geq \lim_{j \to \infty} \mathcal{H}^2(\Sigma^n_{\tau_j} \cap M) -\frac{1}{2L} \\
&=\mathcal{H}^2(\Sigma^n_t \cap M)-\frac{1}{2L}.
\end{align*}
This contradicts (2) above. Therefore, (2') holds for some $\delta>0$. Thus, Claim 2 is established.

The rest of the proof is exactly the same as in section 5.2 of \cite{Colding-DeLellis03}, replacing $\mathcal{H}^2$ by $\mathcal{H}^2(\cdot \cap M)$ and (2'), (3') in \cite{Colding-DeLellis03} by our (1'), (2') in Claim 2 above. Note that there is a typo in the last line of equation in that section. It should read
\begin{equation*}
\mathcal{H}^2(\Gamma^n_t) \leq \mathcal{H}^2(\Sigma^n_t)-\frac{1}{2L}+\frac{1}{4L} \leq \mathcal{F}(\{\Sigma^n_t\})- \frac{1}{4L}.
\end{equation*}
The proof of Claim 1 is completed and so is Lemma 6.3.
\end{proof}

For any $x \in \tilde{M}$, $0<s<t$, let An$(x,s,t)$ denote the open annulus centered at $x$ with inner radius $s$ and outer radius $t$. Let $r>0$, we set $\mathcal{A}_r(x)$ as the collection of open annuli An$(x,s,t)$ such that $0<s<t<r$. By the same argument as in Proposition 5.1 of Colding-De Lellis \cite{Colding-DeLellis03}, we obtain Proposition 4.10, which we restate below.

\begin{proposition}[Proposition 4.10]
Given a saturated collection $\Lambda$, there exists a positive function $r:\tilde{M} \to \mathbb{R}$ and a min-max sequence of surfaces  $\{\Sigma^n=\Sigma^n_{t_n} \cap M\}_{n \in \mathbb{N}}$ such that 
\begin{itemize}
	\item[(1)] $\{\Sigma^n\}_{n \in \mathbb{N}}$ is an outer almost minimizing sequence in any annulus An $\in \mathcal{A}_{r(x)}(x)$, where $x$ is any point in $\tilde{M}$;
	\item[(2)] In every such annulus An, $\Sigma^n$ is a smooth surface (possibly with boundary) when $n$ is sufficiently large;
	\item[(3)] The sequence $\Sigma^n$ converges to a freely stationary varifold $V$ in $M$ as $n \to \infty$.
\end{itemize}
\end{proposition}


\section{A minimization problem with partially free boundary}

In this section, we prove a result about minimizing area in $M$ among isotopic surfaces similar to the ones obtained by F. Almgren and L. Simon \cite{Almgren-Simon79}, W. Meeks, L. Simon and S.T. Yau \cite{Meeks-Simon-Yau82}, M. Gr\"{u}ter and J. Jost \cite{Gruter-Jost86} and J. Jost \cite{Jost86}. Since  we are restricting to the class of outward isotopies, we need to modify some of the arguments used in the papers above.

First, we define the concept of \emph{admissible open sets}.

\begin{definition}
An open set $U \subset \tilde{M}$ is said to be \emph{admissible} if it satisfies all the following properties:
\begin{itemize}
\item[(i)] $U$ is smooth, i.e. $U$ is an open set with smooth boundary $\partial U$;
\item[(ii)] $U$ is uniformly convex in the sense that all the principal curvatures with respect to the inward normal is positive along $\partial U$;
\item[(iii)] the closure $\overline{U}$ is diffeomorphic to the closed unit 3-ball in $\mathbb{R}^3$;
\item[(iv)] $\partial U$ intersects $\partial M$ transversally and $U \cap \partial M$ is topologically an open disk;
\item[(v)] the angle between $\partial U$ and $\partial M$ is always less than $\pi/2$ when measured in $U \cap M$, i.e. if $\nu_U$ and $\nu_M$ are the outward unit normal of $U$ and $M$ respectively, then $\nu_U \cdot \nu_M <0$ along $\partial U \cap \partial M$. 
\end{itemize}
\end{definition}

Given a surface $\Sigma$ in $\tilde{M}$, we want to minimize area (in $M$) among all the surfaces which are outward isotopic to $\Sigma$ and are identical to $\Sigma$ outside an admissible open set $U$.

\begin{definition}
Let $U \subset \tilde{M}$ be an admissible open set. Let $\Sigma \subset \tilde{M}$ be an embedded closed surface (not necessarily connected) intersecting $\partial U$ transversally. Consider the minimization problem $(\Sigma,\mathfrak{Is}_{\text{out}}(U))$: 
\begin{equation*} 
\alpha = \inf_{\{\varphi_s\} \in \mathfrak{Is}_{\text{out}}(U)} \mathcal{H}^2(\varphi_1(\Sigma) \cap M),
\end{equation*}
if a sequence $\{\varphi^k_s\}_{k \in \mathbb{N}} \in \mathfrak{Is}_{\text{out}}(U)$ satisfies
\begin{equation*} 
\lim_{k \to \infty} \mathcal{H}^2(\varphi^k_1(\Sigma) \cap M)=\alpha, 
\end{equation*}
we say that $\Sigma_k=\varphi^k_1(\Sigma)$ is a \emph{minimizing sequence} for the minimization problem $(\Sigma,\mathfrak{Is}_{\text{out}}(U))$. 
\end{definition}

Note that if two surfaces $\Sigma_1$ and $\Sigma_2$ agree in $M$, i.e. $\Sigma_1 \cap M=\Sigma_2 \cap M$, then the minimization problems $(\Sigma_1,\mathfrak{Is}_{\text{out}}(U))$ and $(\Sigma_2,\mathfrak{Is}_{\text{out}}(U))$ are equivalent since we only count area in $M$ and $\varphi_1(\Sigma_1) \cap M=\varphi_1(\Sigma_2) \cap M$ for any $\{\varphi_s\}_{s \in [0,1]} \in \mathfrak{Is}_{\text{out}}$.

By a small perturbation by outward isotopies, we can always obtain a minimizing sequence $\Sigma_k$ so that $\Sigma_k$ intersects $\partial M$ transversally for every $k$. The key result of this section is the following theorem.

\begin{theorem}
Let $U \subset \tilde{M}$ be an admissible open set, suppose $\{\Sigma_k\}$ is a minimizing sequence for the minimization problem $(\Sigma,\mathfrak{Is}_{\text{out}}(U))$ defined in Definition 7.2 so that $\Sigma_k$ intersects $\partial M$ transversally for each $k$, and $\Sigma_k \cap M$ converge weakly to a varifold $V \in \mathcal{V}(M)$.

Then, the following holds:
\begin{itemize}
\item[(a)] $V=\Gamma$, for some compact embedded minimal surface $\Gamma \subset \overline{U} \cap M$ with smooth boundary (except possibly at $\partial U \cap \partial M$) contained in $\partial (U \cap M)$;
\item[(b)] The fixed boundary of $\Gamma$ is the same as  $\Sigma \cap (\partial U \cap M)$; 
\item[(c)] $\Gamma$ meets $\partial M \cap U$ orthogonally along the free boundary of $\partial \Gamma$; 
\item[(d)] $\Gamma$ is stable with respect to $\mathfrak{Is}_{\text{tan}}(U)$.
\end{itemize}
\end{theorem}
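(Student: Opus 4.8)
The plan is to adapt the Meeks--Simon--Yau minimization scheme \cite{Meeks-Simon-Yau82} to the partially free boundary setting, keeping careful track of the fact that we only deform by outward isotopies and only count area inside $M$. I would proceed in four stages corresponding to (a)--(d).

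\textbf{Stage 1: A local replacement/comparison lemma and interior regularity away from $\partial M$.} First I would establish the basic ``$\gamma$-reduction'' or unique continuation–type comparison: if $\Sigma_k$ is a minimizing sequence for $(\Sigma,\mathfrak{Is}_{\mathrm{out}}(U))$ and $B$ is a small ball well inside $U$ (either in $\mathrm{int}(M)$ or meeting $\partial M$), then replacing $\Sigma_k\cap B$ by an area-minimizer (in $M$) among embedded surfaces isotopic to it rel $\partial B$ does not increase $\mathcal{H}^2(\cdot\cap M)$, and can be realized through outward isotopies supported in $B$. One must check that the admissibility hypotheses (i)--(v) on $U$ guarantee enough convexity so that such local cut-and-paste moves exist and stay inside $\overline U$; condition (v), that the angle between $\partial U$ and $\partial M$ is acute measured in $U\cap M$, is exactly what lets an outward push near $\partial U\cap\partial M$ stay admissible. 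From here, away from $\partial M$ the varifold $V$ is a limit of almost minimizers in the interior sense, so the classical Meeks--Simon--Yau regularity (via Almgren--Simon \cite{Almgren-Simon79}) gives that $V\llcorner(\mathrm{int}(M)\cap U)$ is a smooth embedded minimal surface with integer multiplicity, and a curvature estimate / no-concentration argument forces multiplicity one, i.e. $V=\Gamma$ there.

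\textbf{Stage 2: Boundary regularity along $\partial M\cap U$ and orthogonality.} This is where the free boundary enters and where I expect the main obstacle. Near a point of $\partial M\cap U$ I would set up the partially free boundary minimization: $\Sigma_k$ minimizes area in $M$, with the part of $\Sigma_k$ on $\partial M$ free to slide. I would use the Gr\"uter--Jost monotonicity formula and Allard-type boundary regularity for freely stationary varifolds (equation (3.2) and the results of \cite{Gruter-Jost86}, \cite{Gruter-Jost86a}) together with a reflection trick: since we may assume the metric is cylindrical near $\partial M$ after the isometric extension into $\tilde M$, one can double across $\partial M$ so that a free-boundary minimizer becomes (the symmetric half of) an interior minimizer in the doubled manifold, reducing boundary regularity to interior regularity. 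The key technical point is that the doubling is compatible with the class $\mathfrak{Is}_{\mathrm{out}}(U)$ only up to the symmetrization, so I would need a ``cut-and-reflect'' comparison surface argument (as in \S7 of the paper's plan, the partially free boundary minimization) to show the limit is smooth up to $\partial M\cap U$. Once smoothness is in hand, the first variation formula (1.1) applied to tangential variations in $\chi_{\mathrm{tan}}(U)$ forces $H\equiv 0$ and $\nu\perp\partial M$, giving (c). The only possibly singular point is $\partial U\cap\partial M$, which is excluded in the statement.

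\textbf{Stage 3: Identifying the fixed boundary, (b).} Since every $\varphi_s\in\mathfrak{Is}_{\mathrm{out}}(U)$ is the identity outside $U$, all surfaces $\Sigma_k$ coincide with $\Sigma$ on $\tilde M\setminus U$, hence on a neighborhood of $\partial U$ in $\tilde M\setminus U$; the transversality of $\Sigma$ with $\partial U$ and smooth convergence near $\partial U$ (from Stage 1 regularity, which holds in a neighborhood of $\partial U$ as well once we note minimizers are smooth there) imply $\Gamma$ limits onto $\Sigma\cap\partial U$ with multiplicity one, so the fixed portion of $\partial\Gamma$ is exactly $\Sigma\cap(\partial U\cap M)$. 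A barrier/maximum-principle argument using uniform convexity (ii) of $U$ keeps $\Gamma\subset\overline U$, and the convex hull property keeps it from escaping; together with the decomposition $\partial(U\cap M)=(\partial U\cap M)\cup(\partial M\cap U)$ this gives that $\partial\Gamma\subset\partial(U\cap M)$ as claimed in (a).

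\textbf{Stage 4: Stability, (d).} Finally, for any $X\in\chi_{\mathrm{tan}}(U)$ with flow $\{\varphi_s\}$, the curves $s\mapsto\mathcal{H}^2(\varphi_s(\Gamma)\cap M)$ arise as limits of $\mathcal{H}^2(\varphi_s(\Sigma_k)\cap M)$, and since $\{\varphi_s\}\in\mathfrak{Is}_{\mathrm{tan}}(U)\subset\mathfrak{Is}_{\mathrm{out}}(U)$ each such deformation is admissible, so $\Gamma$ minimizes area in its outward-isotopy class and in particular the second variation along tangential fields is nonnegative; this is precisely stability with respect to $\mathfrak{Is}_{\mathrm{tan}}(U)$. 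One should note the asymmetry: we only get \emph{stability}, not minimality, with respect to $\mathfrak{Is}_{\mathrm{tan}}(U)$, because tangential isotopies can decrease $\mathcal{H}^2(\cdot\cap M)$ only up to second order here. I expect Stage 2 (boundary regularity and the compatibility of doubling/reflection with the outward-isotopy constraint near $\partial U\cap\partial M$) to be the genuinely hard part; Stages 1, 3, 4 are adaptations of known arguments with bookkeeping for the ``$\cap M$'' in the area functional.
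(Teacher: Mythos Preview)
Your overall architecture is sound---interior regularity via Meeks--Simon--Yau, free boundary regularity as the crux, and stability from the minimization property are the right pieces, and Stages 3 and 4 are essentially fine. But Stage 2 takes a different route from the paper and contains a real gap.

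The paper does \emph{not} use reflection/doubling across $\partial M$. Instead it runs a modified $\gamma$-reduction (Theorem 7.10) to show that, in small admissible balls centered just outside $\partial M$, the minimizing sequence $\Sigma_k\cap M$ decomposes into genus-zero pieces $D_k^{(j)}$ which themselves form a minimizing sequence for the \emph{genus-zero} partially free boundary problem; then Jost's regularity result (Theorem 7.13, from \cite{Jost86}) is invoked directly. The essential new ingredient making the $\gamma$-reduction go through is Lemma 7.11: unlike in \cite{Meeks-Simon-Yau82}, two small disks with the same boundary curve are \emph{not} in general outward-isotopic to one another, but Lemma 7.11 shows they are outward-isotopic up to an arbitrarily small area defect. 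This is precisely what lets the disk-replacement comparisons survive the $\mathfrak{Is}_{\text{out}}$ constraint, and it is the idea your outline is missing.

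Your reflection proposal runs into exactly the asymmetry you flag but do not resolve: the defining condition $M\subset\varphi_s(M)$ for $\mathfrak{Is}_{\text{out}}(U)$ is manifestly not invariant under reflection across $\partial M$---an outward push on one side becomes an inward push on the reflected copy. Hence the doubled surface is not an (almost-)minimizer in any natural isotopy class in the doubled manifold, and the reduction to interior regularity breaks down. The ``cut-and-reflect comparison'' you allude to would have to turn an arbitrary competitor in the double into an \emph{outward}-isotopic competitor in $M$ with controlled area; this is not obvious and is in effect what Lemma 7.11 is engineered to avoid by never leaving the $M$-side. A small correction on Stage 4: since $\mathfrak{Is}_{\text{tan}}(U)\subset\mathfrak{Is}_{\text{out}}(U)$, the limit $\Gamma$ is in fact \emph{minimizing} (not merely critical) among tangential isotopies, so both free stationarity and stability follow immediately; there is no asymmetry to worry about there.
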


\begin{remark}
It is clear that $V$ is freely stationary. The only thing we have to prove is regularity. Interior regularity follows from a localized version of \cite{Meeks-Simon-Yau82} given in Proposition 3.3 of \cite{DeLellis-Pellandini10}. The regularity at the fixed boundary $\partial U \cap $int$(M)$ is also discussed in \cite{DeLellis-Pellandini10}. Therefore, the only case left is the regularity at the free boundary $\partial M \cap U$. Hence, Theorem 7.3 says that the limit varifold $V$ is equal to a stable, smooth, properly embedded minimal surface $\Gamma$ (possibly disconnected).
\end{remark}

The proof of Theorem 7.3 goes as follows. We first apply a version of local $\gamma$-reduction (see \cite{Meeks-Simon-Yau82} and \cite{DeLellis-Pellandini10}) to reduce the minimization problem to the case of genus zero surfaces. Then we use a result from \cite{Jost86} to conclude that such minimizers are smooth.

\subsection{Local $\gamma$-reductions}

Following \cite{DeLellis-Pellandini10} and \cite{Meeks-Simon-Yau82}, replacing area by area \emph{in $M$}, we modify some of their  definitions and propositions for our purpose. First of all, we fix $\delta>0$ such that the following lemma holds (see Lemma 4.2 of \cite{Jost86}). 

\begin{lemma}
There exists $r_0>0$ and $\delta \in (0,1)$, depending only on $\tilde{M}$ and $M$, with the property that if $\Sigma$ is a surface in int($M$) with $\partial \Sigma \subset \partial M$ and 
\begin{equation*}
\mathcal{H}^2(\Sigma \cap B_{r_0}(x)) < \delta^2 r_0^2 \qquad \text{for each } x \in \tilde{M}
\end{equation*}
then there exists a unique compact set $K \subset M$ such that 
\begin{itemize}
\item[(a)] $\partial K \cap $ int$(M)=\Sigma$ (i.e. $K$ is bounded by $\Sigma$ modulo $\partial M$);
\item[(b)] $\mathcal{H}^3(K \cap B_{r_0}(x)) \leq \delta^2 r_0^3 \qquad \text{for each } x \in \tilde{M}$; and 
\item[(c)] $\mathcal{H}^3(K) \leq c_0 \mathcal{H}^2(\Sigma)^{\frac{3}{2}}$, where $c_0$ depends only on $\tilde{M}$ and $M$.
\end{itemize}
\end{lemma}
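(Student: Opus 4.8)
The plan is to construct the compact set $K$ by a Lipschitz/BV argument using the relative cycle structure of $\Sigma$ modulo $\partial M$, and then to derive (b) and (c) from the smallness hypothesis together with a relative isoperimetric inequality on $M$. First I would observe that, after the ambient embedding $M \subset \tilde M$, the hypothesis $\partial \Sigma \subset \partial M$ together with $\Sigma \subset \mathrm{int}(M)$ (away from its boundary) means that $\Sigma$, viewed as an integral $2$-current, is a relative cycle: $\partial \llbracket \Sigma \rrbracket$ is supported in $\partial M$. By the constancy-type theorem for relative cycles in the ball-like manifold $M$ (here I would use that $M$ is compact and, locally near any point, looks like a half-ball, so $H_2(M,\partial M)$ computations reduce to the connected components of $M$), there is a unique integral $3$-current $\llbracket K \rrbracket$ in $M$ with $\partial \llbracket K \rrbracket = \llbracket \Sigma \rrbracket + (\text{something supported in } \partial M)$; equivalently there is a (essentially unique) compact set $K$ with $\partial K \cap \mathrm{int}(M) = \Sigma$, giving (a). The smallness assumption $\mathcal{H}^2(\Sigma \cap B_{r_0}(x)) < \delta^2 r_0^2$ for all $x$ is what guarantees that $\Sigma$ does not "wrap around" enough to make $K$ ambiguous, i.e. that the choice of $K$ versus its complement $M\setminus K$ is pinned down in the small-scale picture; this is exactly the local-uniqueness statement one finds in Lemma~4.2 of Jost's paper, transcribed to count only the part of $\Sigma$ inside $M$.

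Next I would establish the localized volume bound (b). Fix $x \in \tilde M$ and apply the relative isoperimetric inequality inside the ball $B_{r_0}(x)$ (relative to $\partial M \cap B_{r_0}(x)$, using that $\partial M$ is smooth and $r_0$ is chosen smaller than the injectivity radius and smaller than the scale controlling the second fundamental form of $\partial M$, so the relative isoperimetric constant is comparable to the Euclidean one): either $\mathcal{H}^3(K \cap B_{r_0}(x))$ or $\mathcal{H}^3((M\setminus K)\cap B_{r_0}(x))$ is controlled by $c\,\big(\mathcal{H}^2(\Sigma \cap B_{r_0}(x)) + \mathcal{H}^2(\partial M \cap B_{r_0}(x) \cap \partial K)\big)^{3/2}$ — but after a further application of the local uniqueness one sees the relevant perimeter is just $\mathcal{H}^2(\Sigma \cap B_{r_0}(x)) < \delta^2 r_0^2$, so $\mathcal{H}^3(K\cap B_{r_0}(x)) \le c\,(\delta^2 r_0^2)^{3/2} = c\,\delta^3 r_0^3 \le \delta^2 r_0^3$ provided $\delta$ is chosen small enough that $c\,\delta \le 1$. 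This is where the precise choice of $\delta$ (small, depending only on $\tilde M$ and $M$ through the isoperimetric constant) enters, and it fixes $\delta$ simultaneously for (b) and for the uniqueness step. Then (c) is the global version of the same inequality: $\mathcal{H}^3(K) \le c_0 \mathcal{H}^2(\partial K \cap \mathrm{int}(M))^{3/2} = c_0\,\mathcal{H}^2(\Sigma)^{3/2}$, which is just the (relative) isoperimetric inequality on the compact manifold $M$, with $c_0 = c_0(\tilde M, M)$.

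The main obstacle I expect is the \emph{uniqueness} of $K$, and more precisely making rigorous the passage from "$\Sigma$ is a relative cycle, hence bounds modulo $\partial M$" to "there is a \emph{unique} compact $K$ with $\partial K \cap \mathrm{int}(M) = \Sigma$ \emph{and} satisfying the smallness bound (b)". A priori a relative cycle bounds two complementary regions $K$ and $M \setminus K$, and also one could add to $K$ a union of connected components of $M$ not touched by $\Sigma$; the smallness hypothesis is exactly the device that removes this ambiguity at small scales (any ball $B_{r_0}(x)$ meeting $\partial M$ is split by $\Sigma$ into pieces one of which has small volume, forcing the "small side" to be $K$), but one has to check this is globally consistent — i.e. that the locally-chosen "small sides" glue to a well-defined compact set. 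I would handle this by a connectedness/continuity argument along $M$: the function $x \mapsto$ (which side is small in $B_{r_0}(x)$) is locally constant on $M \setminus \Sigma$ and the two determinations agree across $\Sigma$ in the only consistent way, exactly as in Meeks--Simon--Yau and Jost. Everything else — the isoperimetric inequalities, the current-theoretic existence of $K$, the measure-theoretic boundary identity $\partial K \cap \mathrm{int}(M) = \Sigma$ — is standard once the scale $r_0$ and constant $\delta$ are fixed, and I would cite Lemma~4.2 of \cite{Jost86} for the precise form. I would also remark that $r_0$ must be taken smaller than a geometric scale of $(\tilde M, M)$ controlling curvature of $\partial M$ and the injectivity radius, so that the relative isoperimetric inequality holds uniformly in $B_{r_0}(x)$ for every $x$.
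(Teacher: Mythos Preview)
Your proposal is correct and is essentially the standard Meeks--Simon--Yau/Jost argument. Note, however, that the paper does not actually prove Lemma~7.5: it is stated with a direct citation to Lemma~4.2 of \cite{Jost86} (and implicitly to the corresponding lemma in \cite{Meeks-Simon-Yau82}), so there is no ``paper's own proof'' to compare against beyond those references, which your sketch faithfully reproduces.
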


By rescaling the metric of $\tilde{M}$ if necessary, we can assume that $r_0=1$ in Lemma 7.5. From now on, we will assume that $\delta>0$ satisfies Lemma 7.5 with $r_0=1$. We will generalize the notion of $\gamma$-reductions to allow boundary reductions as well. Suppose $0<\gamma<\delta^2/9$.

\begin{definition}
Let $\Sigma_1$ and $\Sigma_2$ be closed (possibly disconnected) embedded surfaces in $\tilde{M}$. We say that \emph{$\Sigma_2$ is a $(\gamma,U)$-reduction of $\Sigma_1$} and write
\begin{equation*}
\Sigma_2 \stackrel{(\gamma,U)}{\ll} \Sigma_1
\end{equation*}
if the following conditions are satisfied:
\begin{itemize}
	\item[(1)] $\Sigma_2$ is obtained from $\Sigma_1$ through a surgery in $U$, that is,
		\begin{itemize}
			\item[(i)] $\overline{\Sigma_1 \setminus \Sigma_2} \cap M= A \subset U$ is diffeomorphic to either a closed annulus $\mathcal{A}=\{(x_1,x_2)\in \mathbb{R}^2| 1\leq x_1^2+x_2^2 \leq 2\}$ or a closed half-annulus $\mathcal{A}_+=\{ (x_1,x_2) \in \mathcal{A}| x_2 \geq 0\}$;
			\item[(ii)] $\overline{\Sigma_2 \setminus \Sigma_1} \cap M=D_1 \cup D_2 \subset U$ with each $D_i$ diffeomorphic to either the closed unit disk $\mathcal{D}=\{(x_1,x_2) \in \mathbb{R}^2 | x_1^2+x_2^2 \leq 1\}$ or the closed unit half-disk $\mathcal{D}_+=\{(x_1,x_2)\in \mathcal{D}| x_2 \geq 0\}$;
			\item[(iii)] There exists a compact set $K$ embedded in $U$, homeomorphic to the closed unit 3-ball with $\partial K=A\cup D_1 \cup D_2$ modulo $\partial M$ (i.e. there exists a compact set $Y \subset \partial M$ such that $\partial K=A \cup D_1 \cup D_2 \cup Y$), and $(K \setminus \partial K) \cap (\Sigma_1 \cup \Sigma_2)=\emptyset$.
		\end{itemize}
	\item[(2)] $\mathcal{H}^2(A)+\mathcal{H}^2(D_1)+\mathcal{H}^2(D_2) < 2\gamma$;
	\item[(3)] If $\Gamma$ is a connected component of $\Sigma_1 \cap \overline{U} \cap M$ containing $A$, and $\Gamma \setminus A$ is disconnected, then for each component of $\Gamma \setminus A$ we have one of the following possibilities:
		\begin{itemize}
			\item[(a)] either it is a genus zero surface contained in $U \cap M$ with area  $\geq \delta^2/2$;
			\item[(b)] or it is not a genus zero surface.
		\end{itemize}
\end{itemize}
We say that $\Sigma$ is \emph{$(\gamma,U)$-irreducible} if there does not exist $\tilde{\Sigma}$ such that $\tilde{\Sigma} \stackrel{(\gamma,U)}{\ll} \Sigma$.
\end{definition}

A immediate consequence of the above definition is the following.

\begin{remark}
$\Sigma$ is $(\gamma,U)$-irreducible if and only if whenever $\Delta \subset U \cap M$ is a closed disc or half-disk with $\partial \Delta \setminus \partial M=\Delta \cap \Sigma$ and $\mathcal{H}^2(\Delta) < \gamma$, then there is a closed genus 0 surface $D \subset \Sigma \cap U \cap M$ with $\partial \Delta \setminus \partial M=\partial D \setminus \partial M$ and $\mathcal{H}^2(D)<\delta^2/2$.
\end{remark}

Similar to \cite{Meeks-Simon-Yau82}, we define strong $(\gamma,U)$-reduction as follows.

\begin{definition}
Let $\Sigma_1$ and $\Sigma_2$ be closed (possibly disconnected) embedded surfaces in $\tilde{M}$. We say that \emph{$\Sigma_2$ is a strong $(\gamma,U)$-reduction of $\Sigma_1$} and write
\begin{equation*}
\Sigma_2 \stackrel{(\gamma,U)}{<} \Sigma_1
\end{equation*}
if there exists an isotopy $\{\psi_s\}_{s \in [0,1]} \in \mathfrak{Is}_{\text{out}}(U)$ such that 
\begin{itemize}
	\item[(1)] $\Sigma_2 \stackrel{(\gamma,U)}{\ll} \psi_1(\Sigma_1)$;
	\item[(2)] $\Sigma_2 \cap (\tilde{M} \setminus U)=\Sigma_1 \cap (\tilde{M} \setminus U)$;
	\item[(3)] $\mathcal{H}^2((\psi_1(\Sigma_1) \Delta \Sigma_1)\cap M) < \gamma$.
\end{itemize}
We say that $\Sigma$ is \emph{strongly $(\gamma,U)$-irreducible} if there is no $\tilde{\Sigma}$ such that $\tilde{\Sigma} \stackrel{(\gamma,U)}{<} \Sigma$.
\end{definition}

Following the same arguments in Remark 3.1 of \cite{Meeks-Simon-Yau82}, we have the following proposition.

\begin{proposition}
Given any closed embedded surface $\Sigma$ (not necessarily connected), there exists a sequence $\Sigma=\Sigma_1,\Sigma_2,\ldots,\Sigma_k$ of closed embedded surfaces (not necessarily connected) such that 
\begin{equation*}
\Sigma_k \stackrel{(\gamma,U)}{<} \Sigma_{k-1} \stackrel{(\gamma,U)}{<} \cdots \stackrel{(\gamma,U)}{<}\Sigma_1=\Sigma
\end{equation*}
where $\Sigma_k$ is strongly $(\gamma,U)$-irreducible. Furthermore, there exists a constant $c>0$ which depends only on genus$(\Sigma \cap M)$ and $\mathcal{H}^2(\Sigma \cap M)/\delta^2$ so that $k \leq c$, and
\begin{equation*}
\mathcal{H}^2((\Sigma \Delta \Sigma_k) \cap M) \leq 3c\gamma.
\end{equation*}
\end{proposition}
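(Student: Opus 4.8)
The plan is to follow the iterative surgery scheme of Meeks--Simon--Yau, Remark 3.1 of \cite{Meeks-Simon-Yau82}, adapted to the outward-isotopy setting and to counting area in $M$ only. First I would set up the iteration: starting from $\Sigma_1 = \Sigma$, suppose inductively that $\Sigma_j$ has been constructed and is not yet strongly $(\gamma,U)$-irreducible. Then by Definition 7.9 there is an isotopy $\{\psi_s\} \in \mathfrak{Is}_{\text{out}}(U)$ and a surface $\Sigma_{j+1}$ with $\Sigma_{j+1} \stackrel{(\gamma,U)}{\ll} \psi_1(\Sigma_j)$, satisfying conditions (2) and (3) of that definition. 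I would record two quantitative effects of one such step. From (3) of Definition 7.9, $\mathcal{H}^2((\psi_1(\Sigma_j) \Delta \Sigma_j) \cap M) < \gamma$, so the push-forward changes the area in $M$ by at most $\gamma$; and from (2) of Definition 7.7, $\mathcal{H}^2(A) + \mathcal{H}^2(D_1) + \mathcal{H}^2(D_2) < 2\gamma$, so passing from $\psi_1(\Sigma_j)$ to $\Sigma_{j+1}$ changes area in $M$ by at most $2\gamma$ and changes the symmetric difference in $M$ by at most $2\gamma$. Summing, $\mathcal{H}^2((\Sigma_j \Delta \Sigma_{j+1}) \cap M) < 3\gamma$, and after $k$ steps the triangle inequality for symmetric differences gives $\mathcal{H}^2((\Sigma \Delta \Sigma_k)\cap M) \le 3c\gamma$ once we know $k \le c$.

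The crux is therefore the termination bound $k \le c$, with $c$ depending only on $\mathrm{genus}(\Sigma \cap M)$ and $\mathcal{H}^2(\Sigma \cap M)/\delta^2$. The argument is a monotonicity/quantity-decrease argument in the spirit of Meeks--Simon--Yau. Each $(\gamma,U)$-reduction step is a surgery that either (a) lowers the total genus of $\Sigma_j \cap M$, or (b) keeps the genus fixed but splits off a new genus-zero component; and condition (3) of Definition 7.7 forces each genus-zero component created by a splitting to have area $\ge \delta^2/2$ in $M$ (case (a) of that condition; case (b) is excluded because a non-genus-zero piece feeds back into the genus bookkeeping). One then assigns to each $\Sigma_j$ a nonnegative integer weight, say $w(\Sigma_j) = a\cdot \mathrm{genus}(\Sigma_j \cap M) + b\cdot (\text{number of components of } \Sigma_j \cap M)$, or more precisely the Meeks--Simon--Yau complexity counting genus plus the number of components whose area exceeds $\delta^2/2$, and shows that every reduction step strictly decreases this weight. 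Since genus is bounded by $\mathrm{genus}(\Sigma\cap M)$ throughout (surgeries do not raise genus) and the number of ``large'' components is bounded by $2\mathcal{H}^2(\Sigma\cap M)/\delta^2$ because the total area in $M$ is non-increasing up to the controlled $\gamma$-errors and each large component eats at least $\delta^2/2$ of it, the weight is bounded a priori by a constant $c$ of the stated form, so the process stops after at most $c$ steps.

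I expect the main obstacle to be the area bookkeeping in the presence of the half-annulus/half-disk surgeries at $\partial M$ and the outward-isotopy constraint, which are the new features here compared to \cite{Meeks-Simon-Yau82}. Specifically, one must verify that: (i) the isotopies $\psi_s$ can always be taken in $\mathfrak{Is}_{\text{out}}(U)$ rather than the full $\mathfrak{Is}(U)$ — this is really the content of Definition 7.9 being non-vacuous, but in the termination argument one needs that the area \emph{in $M$} does not drift upward uncontrollably over many steps, which follows from conditions (2)--(3) of Definition 7.9 together with (2) of Definition 7.7 as above; and (ii) a boundary surgery along a half-annulus, which excises a half-annulus and glues in two half-disks meeting $\partial M$, does not increase $\mathrm{genus}(\cdot \cap M)$ and produces new pieces that are honestly counted by the complexity — this is the relative (mod $\partial M$) version of the standard fact that compressing along an annulus simplifies the surface, and uses condition (iii) of Definition 7.7 that the surgery region bounds a half-ball $K$ with $\partial K = A \cup D_1 \cup D_2$ modulo $\partial M$ and $(K\setminus\partial K)\cap(\Sigma_1\cup\Sigma_2)=\emptyset$, so the surgery is genuinely a cut-and-paste along an embedded region. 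Once these two points are in place, the constant $c$ can be read off as (twice) the genus plus (twice) the area-to-$\delta^2$ ratio, up to absolute multiplicative factors, and the $3c\gamma$ estimate follows by summing the per-step estimates, completing the proof.
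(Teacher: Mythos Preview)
Your proposal is correct and takes essentially the same approach as the paper, which simply states that the proof is the same as Remark~3.1 in \cite{Meeks-Simon-Yau82}. You have in fact supplied more detail than the paper does: the per-step symmetric-difference estimate $\mathcal{H}^2((\Sigma_j \Delta \Sigma_{j+1})\cap M) < 3\gamma$ from Definitions~7.7(2) and~7.8(3), the complexity argument for termination based on genus and the count of large genus-zero pieces (using $\gamma < \delta^2/9$), and the observation that the half-annulus/half-disk boundary surgeries fit into the same bookkeeping via Definition~7.7(1)(iii), are exactly the points one needs to check when transporting the Meeks--Simon--Yau scheme to the outward-isotopy, area-in-$M$ setting.
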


\begin{proof}
The proof is the same as the proof of Remark 3.1 in \cite{Meeks-Simon-Yau82}.
\end{proof}

The following theorem gives our main result for strongly $(\gamma,U)$-irreducible surfaces $\Sigma$. For any closed surface $\Sigma$, we denote
\begin{equation*}
E(\Sigma)=\mathcal{H}^2(\Sigma \cap M) - \inf_{\Sigma' \in J_U(\Sigma)} \mathcal{H}^2(\Sigma' \cap M),
\end{equation*}
where $J_U(\Sigma)=\{\varphi_1(\Sigma): \{\varphi_s\}_{s \in [0,1]} \in \mathfrak{Is}_{\text{out}}(U)\}$ denotes the set of all surfaces which are outward isotopic to $\Sigma$ in $U$. Let $\Sigma_0$ denote the union of all components $\Lambda \subset \Sigma \cap \overline{U} \cap M$ such that there exists some $K_\Lambda \subset U$ diffeomorphic to the unit 3-ball such that $\Lambda \subset K_\Lambda$ and $\partial K_\Lambda \cap \Sigma \cap M=\emptyset$.

\begin{theorem}
Let $U \subset \tilde{M}$ be an admissible open set, and $A \subset U$ be a compact subset diffeomorphic to the unit 3-ball. Assume $\partial M$ intersects both $\partial U$ and $\partial A$ transversally. 

Suppose $\Sigma \subset \tilde{M}$ is a smooth closed embedded surface (possibly disconnected) such that
\begin{itemize}
\item[(i)] $\Sigma$ intersects both $\partial M$ and $\partial A$ transversally;
\item[(ii)] $E(\Sigma) \leq \gamma/4$ and is strongly $(\gamma,U)$-irreducible;
\item[(iii)] For each component $\Gamma$ of $\Sigma \cap \partial A \cap M$, let $F_\Gamma$ be a the component in $(\partial A \cap M) \setminus \Gamma$ such that $\partial F_\Gamma \setminus \partial M=\Gamma$ and $\mathcal{H}^2(F_\Gamma)= \min \{ \mathcal{H}^2(F_\Gamma),\mathcal{H}^2((\partial A \cap M) \setminus F_\Gamma)\}$. Furthermore, suppose that $\sum_{j=1}^q \mathcal{H}^2(F_j) \leq \frac{\gamma}{8}$, where $F_j=F_{\Gamma_j}$ and $\Gamma_1,\ldots,\Gamma_q$ denote the components of $\Sigma \cap \partial A \cap M$. Note that each $\Gamma$ is either a closed Jordan curve in $M$ or a Jordan arc with endpoints on $\partial M$, and each $F_\Gamma$ is either a disk, a half-disk or an annulus in $M$. 
\end{itemize}
Then, $\mathcal{H}^2(\Sigma_0) \leq E(\Sigma)$ and there exists pairwise disjoint, connected, closed genus zero surfaces $D_1,\ldots,D_p$ with
$D_i \subset (\Sigma \setminus \Sigma_0) \cap U \cap M$, $\partial D_i \setminus \partial M \subset \partial A$ and $\left(\cup_{i=1}^p D_i \right) \cap A \cap M = (\Sigma \setminus \Sigma_0) \cap A \cap M$. Moreover,
\begin{equation*}
\sum_{i=1}^p \mathcal{H}^2(D_i) \leq \sum_{j=1}^q \mathcal{H}^2(F_j)+ E(\Sigma),
\end{equation*}
Furthermore, for any given $\alpha>0$, we have
\begin{equation*}
\mathcal{H}^2((\cup_{i=1}^p (\varphi_1(D_i) \setminus \partial D_i)) \cap M \setminus (A \setminus \partial A)) < \alpha
\end{equation*}
for some isotopy $\{\varphi_s\}_{s \in [0,1]} \in \mathfrak{Is}_{\text{out}}(U)$ (depending on $\alpha$) which is identity on some open neighborhood of $(\Sigma \setminus \Sigma_0)\setminus \cup_{i=1}^p (D_i \setminus \partial D_i)$.
\end{theorem}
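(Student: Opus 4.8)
The plan is to follow the structure of the corresponding argument in \cite{Meeks-Simon-Yau82} (and its boundary adaptation in \cite{DeLellis-Pellandini10}), but carrying every construction \emph{relative to $M$} and using only outward isotopies supported in $U$. First I would deal with $\Sigma_0$: each component $\Lambda$ of $\Sigma_0$ sits inside a 3-ball $K_\Lambda \subset U$ meeting $\Sigma\cap M$ only along $\Lambda$, so one can push $\Lambda$ across $K_\Lambda$ by an outward isotopy supported in a neighborhood of $K_\Lambda$, reducing $\mathcal{H}^2(\Lambda)$ to something arbitrarily small while keeping the rest of $\Sigma$ fixed; comparing with the definition of $E(\Sigma)$ immediately gives $\mathcal{H}^2(\Sigma_0)\le E(\Sigma)$. (One must check the isotopy can be taken outward — this is where admissibility of $U$, in particular condition (v) on the angle with $\partial M$, is used so that the ball $K_\Lambda$ and the sweep-out across it stay compatible with $M\subset\varphi_s(M)$.)

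Next I would construct the disks $D_i$. For each component $\Gamma_j$ of $\Sigma\cap\partial A\cap M$ one has the ``small cap'' $F_j\subset\partial A\cap M$ with $\partial F_j\setminus\partial M=\Gamma_j$. Replace the portion of $\Sigma$ inside $A\cap M$ by pieces of $\partial A$: more precisely, consider the competitor surface obtained by deleting $(\Sigma\setminus\Sigma_0)\cap(A\setminus\partial A)$ and gluing in the $F_j$'s (suitably pushed slightly off $\partial A$ by an outward isotopy so transversality is preserved), and then apply the strong $(\gamma,U)$-irreducibility hypothesis together with $E(\Sigma)\le\gamma/4$ and $\sum\mathcal{H}^2(F_j)\le\gamma/8$ to control the surgeries. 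The irreducibility forces that every component of $(\Sigma\setminus\Sigma_0)\cap\overline U\cap M$ touching $\partial A$, after the cap swap, must already be a genus zero surface of small area bounding modulo $\partial M$ — this is exactly the dichotomy in condition (3) of Definition 7.6, and the ``small area'' branch is the one that survives because the alternative would contradict irreducibility. This yields the $D_i$ with $\partial D_i\setminus\partial M\subset\partial A$ and $(\cup D_i)\cap A\cap M=(\Sigma\setminus\Sigma_0)\cap A\cap M$, and the area bound $\sum\mathcal{H}^2(D_i)\le\sum\mathcal{H}^2(F_j)+E(\Sigma)$ comes from the isoperimetric-type comparison of Lemma 7.5: the region between $\cup D_i$ and $\cup F_j$ is a thin slab of volume $\lesssim(\sum\mathcal{H}^2(F_j)+E(\Sigma))^{3/2}$, so any larger area excess in the $D_i$ could be removed by an outward isotopy, again contradicting the definition of $E(\Sigma)$.

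Finally, for the last assertion, given $\alpha>0$ I would build the isotopy $\{\varphi_s\}$ by flowing the $D_i$'s \emph{into} $A$: since each $D_i$ is a genus zero surface with $\partial D_i\setminus\partial M$ on $\partial A$, and $A$ is a 3-ball, one can sweep $D_i\setminus\partial D_i$ across a collar of $\partial A$ inside $A$ so that the image lies in $A\setminus\partial A$ up to area $<\alpha$; doing this simultaneously and localizing in disjoint neighborhoods of the $D_i\setminus\partial D_i$ (which are pairwise disjoint and disjoint from the rest of $\Sigma\setminus\Sigma_0$) makes $\{\varphi_s\}$ identity near $(\Sigma\setminus\Sigma_0)\setminus\cup(D_i\setminus\partial D_i)$. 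The isotopy is outward because the sweep pushes mass toward the \emph{interior} of the convex region $A$ and across $\partial M$ only in the favorable direction, using admissibility (iv)–(v). The main obstacle I expect is precisely the bookkeeping that keeps \emph{all} of these isotopies in $\mathfrak{Is}_{\text{out}}(U)$ rather than in the larger class $\mathfrak{Is}(U)$: unlike in \cite{Meeks-Simon-Yau82}, one is not free to move points into $M$, so every cap swap, every slab-removal, and every push-into-$A$ must be realized by a diffeomorphism with $M\subset\varphi_s(M)$, and verifying this compatibility along $\partial A\cap\partial M$ and $\partial U\cap\partial M$ (where three boundaries meet at angles controlled only by admissibility) is the delicate point. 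The genus and area counting, by contrast, is essentially identical to \cite{Meeks-Simon-Yau82} once the reductions are set up relative to $M$.
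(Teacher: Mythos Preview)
Your overall picture—adapt Meeks--Simon--Yau to work relative to $M$ and with outward isotopies—is right, and your treatment of $\Sigma_0$ is fine. But the heart of your argument, the construction of the $D_i$ and the area bound, has a genuine gap.

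The paper does \emph{not} do a single global cap swap followed by an appeal to irreducibility. It proceeds by \emph{induction on $q$}, the number of boundary curves $\Gamma_j$. At each step one picks an \emph{innermost} $F_q$; strong $(\gamma,U)$-irreducibility (via the characterization in Remark~7.7) then produces a single genus-zero piece $D\subset\Sigma\cap U\cap M$ with $\partial D\setminus\partial M=\Gamma_q$ and $\mathcal H^2(D)<\delta^2/2$. One replaces $D$ by $F_q$ to form $\Sigma_*$, perturbs slightly to $\hat\Sigma_*$, checks that $\hat\Sigma_*$ satisfies the inductive hypothesis $(H)_{q-1}$, and only \emph{then} gets the $D_i$ from the induction. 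Your proposal skips this structure; in particular, your claim that irreducibility directly ``forces every component \ldots\ to be a genus zero surface of small area'' is not how Remark~7.7 works—it gives you one disk at a time, bounded by one $\Gamma_j$, not a global decomposition.

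More seriously, you have not identified the one new lemma that makes the outward-isotopy constraint tractable, namely Lemma~7.11: two small disks with the same boundary relative to $\partial M$ are in general \emph{not} outward isotopic, but they are outward isotopic up to arbitrarily small area. This is exactly what is needed to compare $E(\hat\Sigma_*)$ with $E(\Sigma)$: the inequality
\[
\inf_{\Sigma'\in J_U(\Sigma)}\mathcal H^2(\Sigma'\cap M)\ \le\ \inf_{\Sigma'\in J_U(\hat\Sigma_*)}\mathcal H^2(\Sigma'\cap M)
\]
is \emph{false} a priori under outward isotopies (it is automatic in Meeks--Simon--Yau), and Lemma~7.11 is what rescues it. Without this, your area bound $\sum\mathcal H^2(D_i)\le\sum\mathcal H^2(F_j)+E(\Sigma)$ has no proof; your proposed justification via Lemma~7.5 is not correct—Lemma~7.5 is an isoperimetric statement used to \emph{bound the region} $K$ between $D$ and $F_q$, not to compare areas of competitors. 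Likewise, Lemma~7.11 is used again in constructing the final isotopy $\varphi$ (the $\beta$ in the paper's case split between $F_q\subset\cup\Delta_i$ and $F_q\not\subset\cup\Delta_i$); your ``flow the $D_i$ into $A$'' sketch does not address why such a flow can be realized in $\mathfrak{Is}_{\text{out}}(U)$, and the answer is again Lemma~7.11. You correctly flag the outward-isotopy bookkeeping as the delicate point, but you need to actually supply the mechanism—Lemma~7.11—and organize the argument around the induction on $q$.
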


Although all sufficiently small disks with the same boundary are isotopic to each other, which is a crucial point in the proof of Theorem 2 in \cite{Meeks-Simon-Yau82}, not all of them are outward isotopic to each other. Therefore, we need to establish the lemma below, which says that all small disks are almost outward isotopic, modulo arbitrarily small area.

\begin{lemma}
Let $U$ and $A$ be as in Theorem 7.10. Let $\Gamma$ be a Jordan curve in $\partial A \cap $int$(M)$ which is either closed or having endpoints on $\partial M$. Let $F \subset \partial A$ be a connected component of $(\partial A \cap M) \setminus \Gamma$, which is diffeomorphic to a disk, a half-disk or an annulus. Let $D \subset U \cap M$ be a genus zero surface transversal to $\partial M$ with $\partial D \setminus \partial M=\partial F \cap \setminus \partial M=\Gamma$, and $D \cap F=\emptyset$. In addition, we assume that $F \cup D$ bounds a unique compact set $K \subset M$ modulo $\partial M$, i.e. $\partial K \cap $int$(M)=F \cup D$.

Then, for any $\alpha >0$, there exists an isotopy $\{\varphi_s\}_{s \in [0,1]} \in \mathfrak{Is}_{\text{out}}(U)$ supported on a small neighborhood of $K$ such that $\varphi_s(x)=x$ for all $x \in \Gamma$ and all $s \in [0,1]$, moreover, 
\begin{equation*}
\mathcal{H}^2((\varphi_1(D) \cap M)  \Delta F) < \alpha.
\end{equation*}
In other words, we can outward isotope $D$ to approximate $F$ as close as we want.
\end{lemma}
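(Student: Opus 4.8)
The plan is to produce $\{\varphi_s\}$ as the (time-one) flow of a suitably chosen time-dependent vector field that ``sweeps'' $D$ across the solid region $K$ until it comes to rest on $F$. The picture to keep in mind is that $K$ is a topological $3$-ball whose boundary sphere is $D\cup F$, capped off along $\partial M$ by the region $Y:=\partial K\cap\partial M$, with $D$ and $F$ playing the roles of the two hemispheres and $\Gamma$ the equator; this is the configuration that actually occurs in the application, where $D$ has small area, so that $D\cup F$ is a small embedded $2$-sphere modulo $\partial M$ sitting inside the ball $U$, which forces $K$ to be a ball. The relevant local model, near an interior point of $\Gamma$, is the wedge $\{(u,y,z):y\ge 0,\ 0\le z\le my\}$ with $D$ modelled by $\{z=0\}$, $F$ by $\{z=my\}$ and $\Gamma$ by $\{y=z=0\}$, foliated by the disks $\Sigma_t=\{z=tmy\}$, $t\in[0,1]$; the transverse velocity field of this foliation is $my\,\partial_z$, which is smooth, vanishes on $\Gamma$, has bounded derivatives, and whose time-one flow carries $\{z=0\}$ exactly onto $\{z=my\}$ while fixing $\{y=z=0\}$.

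First I would globalise this. Fix $\epsilon>0$, let $N_\epsilon$ be an $\epsilon$-neighbourhood of $\Gamma$ in $\tilde M$, and note that $D\cup F$ is a smoothly embedded surface away from $N_\epsilon$. Using that $K$ is a $3$-ball, I would construct a smooth foliation of $\overline{K\setminus N_\epsilon}$ by embedded disks interpolating between $D\setminus N_\epsilon$ and $F\setminus N_\epsilon$, so that: (i) each leaf contains $\Gamma\setminus N_\epsilon$ and degenerates smoothly to $K\cap\partial N_\epsilon$; and (ii) each leaf meets $\partial M$ transversally along a curve contained in $Y$. Property (ii) is the crucial one: it forces the transverse velocity field $X_s$ of this foliation to be tangent to $\partial M$ at every point of $Y$. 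Extending $X_s$ to a smooth time-dependent vector field on $\tilde M$ supported in a preassigned neighbourhood $\Omega\subset U$ of $K$, keeping it tangent to $\partial M$ along all of $\partial M$ and equal to zero near $\Gamma$, we get $X_s\in\chi_{\text{tan}}\subset\chi_{\text{out}}$ for every $s$. Hence its flow $\{\varphi_s\}_{s\in[0,1]}$ lies in $\mathfrak{Is}_{\text{tan}}(U)\subset\mathfrak{Is}_{\text{out}}(U)$, is supported in $\Omega$ (a small neighbourhood of $K$), satisfies $\varphi_s(x)=x$ for all $x\in\Gamma$ and all $s$, and — by the model computation above, run for a sufficiently long but finite time and reparametrised to $[0,1]$ — carries $D$ onto a surface $\varphi_1(D)$ that coincides with $F$ outside a neighbourhood $N$ of $\Gamma$ of size $O(\epsilon)$, while equalling $D$ near $\Gamma$.

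It then remains to estimate the error. The symmetric difference $(\varphi_1(D)\cap M)\,\Delta\,F$ is contained in $N\cap M$, and there
\begin{equation*}
\mathcal H^2\big((\varphi_1(D)\cap M)\,\Delta\,F\big)\ \le\ \mathcal H^2\big(\varphi_1(D)\cap N\big)+\mathcal H^2\big(F\cap N\big)\ \le\ C\,\mathcal H^2\big(D\cap N'\big)+\mathcal H^2\big(F\cap N\big),
\end{equation*}
where $N'$ is again a neighbourhood of $\Gamma$ of size $O(\epsilon)$ and $C$ bounds the bi-Lipschitz constant of $\varphi_1$ on $\Omega$. The point is that $C$ can be kept independent of $\epsilon$, because the foliation — and hence $X_s$ — is built from the linear wedge model $\Sigma_t=\{z=tmy\}$ (whose transverse field $my\,\partial_z$ has uniformly bounded derivatives) rather than from a ``cap'' model whose transverse field is singular along $\Gamma$. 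Since $\Gamma$ has finite length and $D,F$ are smooth, the right-hand side tends to $0$ as $\epsilon\to0$; choosing $\epsilon$ small enough that it is $<\alpha$ finishes the proof.

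The step I expect to be the main obstacle is the construction of the interpolating foliation with the two boundary constraints imposed at once — fixing $\Gamma$ and keeping the leaves transversal to $\partial M$ with boundaries confined to $Y$ — and especially its behaviour near the triple locus $\Gamma\cap\partial M$, where $D$, $F$ and $\partial M$ all meet. Away from that locus each constraint is handled by the linear wedge model together with a standard collar argument along $\partial M$; at the triple locus one must combine a rotation of $Y$ within $\partial M$ (carrying $\partial D\cap\partial M$ onto $\partial F\cap\partial M$) with the transverse lift, all at rates proportional to the distance to $\Gamma$ so that the resulting field stays smooth and tangent to $\partial M$. It is precisely in order to sidestep carrying this matching all the way to $\Gamma$ (and to absorb the transversal corner of $D\cup F$ along $\Gamma$) that we excise $N_\epsilon$ and settle for an arbitrarily small area error, instead of the cleaner but unneeded equality $\varphi_1(D)\cap M=F$.
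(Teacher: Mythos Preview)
Your approach has a genuine gap: you insist that the velocity field $X_s$ lie in $\chi_{\text{tan}}$, so that the resulting isotopy is tangential. This is too restrictive, and it is precisely why the lemma only asks for $\mathfrak{Is}_{\text{out}}(U)$ rather than $\mathfrak{Is}_{\text{tan}}(U)$.

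Consider the case where $\Gamma$ is a closed curve, $F\subset\partial A$ is an \emph{annulus} with $\partial F=\Gamma\cup\Gamma_0$ and $\Gamma_0\subset\partial M$, while $D$ is a disk with $D\cap\partial M=\emptyset$ (this is perfectly compatible with the hypotheses). Then $Y=\partial K\cap\partial M$ is the disk in $\partial M$ bounded by $\Gamma_0$. Your foliation would have to interpolate between a leaf $D$ with \emph{no} boundary on $\partial M$ and a leaf $F$ with boundary circle $\Gamma_0$ on $\partial M$, all leaves meeting $\partial M$ transversally with boundary in $Y$; no such smooth foliation exists, since the number of boundary components on $\partial M$ cannot change without a tangency. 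More to the point, any tangential isotopy keeps $\varphi_1(D)$ an embedded disk inside $M$, so $\varphi_1(D)\cap M=\varphi_1(D)$ must cap off the hole of $F$; the portion of $\varphi_1(D)$ lying over $Y$ contributes at least roughly $\mathcal H^2(Y)>0$ to $(\varphi_1(D)\cap M)\,\Delta\,F$, no matter how the tangential isotopy is chosen. So the error cannot be made smaller than a fixed positive number.

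The paper handles this case by a genuinely \emph{outward} isotopy --- essentially a vertical translation pushing $D$ through $\partial M$ --- so that the cap over $Y$ exits $M$ and $\varphi_1(D)\cap M$ becomes an annulus approximating $F$. More generally, the paper proceeds by a case analysis ($\Gamma$ closed vs.\ an arc; $F$ a disk vs.\ an annulus), first using tangential ``neck-shrinking'' to simplify the components of $D\cap\partial M$, and then, in the annulus and half-disk cases, using outward translations with a cutoff near $\partial F\cap\partial M$. Your uniform foliation picture is appealing, but it only covers the situation where $D$ and $F$ have the same boundary pattern on $\partial M$; to repair it you would need to allow $X_s$ to have a nonnegative normal component on $\partial M$ (so $X_s\in\chi_{\text{out}}$, not $\chi_{\text{tan}}$) and let the leaves leave $M$, which brings you back to the paper's mechanism.
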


\begin{proof}

We divide the situation into two cases according to whether the boundary curve $\Gamma$ is closed or not.

\textit{Case 1:} $\Gamma$ is a simple closed curve. 

In this case, $F$ is either a closed disk or a closed annulus in $\partial A \cap M$. In the latter case, we will show that we can even find an isotopy $\{\varphi_s\}$ supported on a neighborhood of $K$, leaving $\Gamma$ fixed, and $\varphi_1(D) \cap M=F$.

After a change of coordinate, we can assume that
\begin{itemize}
\item $U$ is the open ball of radius $2$ in $\mathbb{R}^3$ centered at origin; 
\item $A \subset U$ is the closed unit 3-ball centered at origin;
\item $M \cap U=U \cap \{x_3 \geq 0\}$ is the upper half-ball; and
\item $\Gamma=\{(x_1,x_2,x_3) \in \mathbb{R}^3\; : \; x_3= \frac{1}{2}, \; x_1^2 + x_2^2=\frac{3}{4}\}$.
\end{itemize}

First, we look at the case that $F$ is a closed disk, i.e. $F=\{(x_1,x_2,x_3) \in \mathbb{R}^3 \; : \; x_3 \geq \frac{1}{2} \}$. Let $D$ be the genus zero surface as given in the hypothesis. Note that $D$ meets $\partial M$ at a finite number of simple closed curves $\Gamma_i$, $i=1,\ldots,N$, each of which bounds a closed disk $D_i$ in $\partial M \cap U$. Since $D$ is a genus zero surface with boundary, it is clear that $D \cup F \cup (D_1 \cup \cdots \cup D_N)$ is homeomorphic to a 2-sphere, and thus, the compact set $K$ bounded by $F \cup D$ modulo $\partial M$ is homeomorphic to the unit 3-ball in $\mathbb{R}^3$. Observe that if $D \cap \partial M =\emptyset$, then it is trivial that we can isotope $D$ to $F$, holding $\partial M$ fixed. If $D \cap \partial M \neq \emptyset$, we then use a tangential isotopy to deform it so that it approximates a disk disjoint from $\partial M$ with boundary $\Gamma$, which in turn is isotopic to $F$. To see this, perturb each closed disk $D_i$ into the interior of $M$ such that the boundary of the disk stays on $D$, call $\hat{D}_i$ the perturbed disk, then there is a closed annulus $A_i \subset D$ such that $\partial A_i = \partial D_i \cup \partial \hat{D}_i$ and $A_i \cup D_i \cup \hat{D}_i$ bounds a ball in $K$. Using a tangential isotopy, we can deform $D$ such that it agrees with $(D \setminus A_i) \cup \hat{D}_i$ with an error in area as small as we want (one simply shrinks the size of the neck in $K$). Repeat the whole procedure for each $D_i$, one can deform $D$ such that is arbitrarily close to a disk disjoint from $\partial M$, and we are done.

In the case that $F$ is a closed annulus, again let $\Gamma_i$ be the set of simple closed curves where $D$ meets $\partial M$. Note that all except possibly one $\Gamma_i$ bounds a disk in $K$. For those which bounds a disk, we can repeat the ``neck-shrinking'' argument as in the previous case to eliminate them. Therefore, we can assume that $\Gamma_i$ together with $\Gamma_0=\partial F \cap \partial M$ bounds a connected genus 0 surface $\Lambda$ in $\partial M$. After a further change of coordinate, we can assume that $K=\Lambda \times [0,\frac{1}{2}]$, where we think of $\Lambda$ as a subset of $\partial M \subset \mathbb{R}^2$. Next consider the outward isotopy given by the vertical translations $\varphi_s(x_1,x_2,x_3)=(x_1,x_2,x_3-s)$ (with some cutoff near $\partial U$ so that it is supported in $U$), take a smooth function $\chi$ on $\mathbb{R}^2$ such that $\chi=0$ along $\Gamma_0$, $\chi=1$ outside a small neighborhood $V$ of $\Gamma_0$ disjoint from all $\Gamma_i$, and $0<\chi<1$ elsewhere. By choosing the neighborhood $V$ of $\Gamma_0$ smaller and smaller, we see that the outward isotopy $\{\chi \varphi_s\}$ given by the vertical translations with cutoff would deform $D$ to approximate $F$ as close as we want. This implies our desired conclusion.

\textit{Case 2:} $\Gamma$ is an arc with endpoints on $\partial M$.

Assume the standard setting as before after a change of coordinate, suppose for our convenience that $\Gamma=\{(x_1,x_2,x_3) \in \partial A \; : \; x_1=0\}$, and $F=\{(x_1,x_2,x_3) \in \partial A \; : \; x_1 \geq 0\}$. Note that $D$ intersects $\partial M$ at a Jordan arc $\Gamma_1$ with the same endpoints as $\Gamma$ and a (possibly empty) finite collection of disjoint simple closed curves $\Gamma_i$, $i=2,\ldots,N$. Let $\Gamma_0=F \cap \partial M$. By assumption, there is a compact set $K \subset M \cap U$ such that $\partial K=D \cup F \cup \Lambda$ where $\Lambda$ is a genus zero surface in $\partial M$ with $\partial \Lambda=\cup_{i=0}^N \Gamma_i$. For all those $\Gamma_i$, $i=2,\ldots,N$, which bounds a disk in $\Lambda$, we can shrink down the neck as in Case 1. So we can assume without loss of generality that $\Lambda$ is connected. There are two further sub-cases: either $\Gamma_0 \cup \Gamma_1$ is the outermost boundary of $\Lambda$ or it is not. In the first case, similar to the second part of Case 1 above, we can assume (up to a change of coordinate) that $F=\{(x_1,x_2,x_3) \in \mathbb{R}^3 : x_1=0, x_3 \geq 0, x_2^2 +x_3^2 \leq 1\}$, $\Lambda=\{(x_1,x_2,x_3) \in \mathbb{R}^3: x_3=0,x_1 \geq 0, x_1^2+x_2^2 \leq 1\} \setminus \cup_{i=2}^N D_i$ where $D_i$ is the disk in $\partial M$ bounded by the simple closed curve $\Gamma_i$, and $D$ is the union of a graph over $\Lambda$ and $n-1$ cylinders contained in $\Gamma_i \times [0,1]$. Using a cutoff function $\chi$ as before which is zero on $\Gamma_0=F \cap \partial M$ and the vertical translations, we can deform $D$ to approximate $F$ as close as we want. Now we are left with the case that $\Gamma_0 \cup \Gamma_1$ is not the outermost boundary of $\Lambda$. In this case, we can take $\Lambda=\{(x_1,x_2,x_3) \in \mathbb{R}^3 : x_3=0, x_1^2+x_2^2 \leq \frac{9}{16}\} \setminus \cup_{i=2}^N D_i$ where $D_i$ is the disk in $\partial M$ bounded by the simple closed curve $\Gamma_i$, and $D$ is the union of a graph over $\Lambda$ and $n-1$ cylinders contained in $\Gamma_i \times [0,1]$. Then a similar translation with cutoff will deform $D$ to approximate $F$ and we are done.

\end{proof}

Now we are ready to give a proof of Theorem 7.10.

\begin{proof}[Proof of Theorem 7.10]
The proof is almost the same as that in \cite{Meeks-Simon-Yau82}, except that we have to use Lemma 7.11 because not all small disks are outward isotopic to each other. 

As in Meeks-Simon-Yau \cite{Meeks-Simon-Yau82}, we can assume that $\Sigma_0=\emptyset$. We proceed by induction on $q$. Denote
\begin{itemize}
	\item[$(H)_q$] $\Sigma_0=\emptyset$, $\sum_{j=1}^q \mathcal{H}^2(F_j) \leq \gamma/8$, $E(\Sigma) \leq \gamma/2 -2\sum_{j=1}^q \mathcal{H}^2(F_j)$, and $\Sigma$ is strongly $(\hat{\gamma},U)$-irreducible, where $\hat{\gamma}=\gamma/4 +4 \sum_{j=1}^q \mathcal{H}^2(F_j) + E(\Sigma)$.
	\item[$(C)_q$] the conclusion of Theorem 7.10 is true.
\end{itemize}
We will show that the statement ``$(H)_q \Rightarrow (C)_q$'' is true for all $q$. Assume it is true for $q-1$. We want to show by induction that it is true for $q$ also.

Relabeling if necessary, we can assume that $F_q$ is innermost, i.e. $F_q \cap \Gamma_j=\emptyset$ for all $j \neq q$. Since $\Sigma$ is strongly $(\hat{\gamma},U)$-irreducible and $\mathcal{H}^2(F_q) < \hat{\gamma}$. By Remark 7.7, there exists a connected genus zero surface $D \subset \Sigma \cap U \cap M$ such that $\partial D \setminus \partial M=\Gamma_q$ and $\mathcal{H}^2(D) < \delta^2/2$. Since $F_q$ is innermost, $D \cap F_q=\emptyset$. As the area of $F_q$ and $D$ are small, we can apply Lemma 7.5. Hence, there is a unique compact set $K \subset M$ which is bounded by $F_q \cup D$ modulo $\partial M$. Moreover, since $F \cup D$ is a genus zero surface, it is easy to see that there exists a small neighborhood of $K$ which is diffeomorphic to the unit 3-ball whose boundary is disjoint from $\Sigma \cap M$. Because we assume that $\Sigma_0=\emptyset$, we know that the whole neighborhood is disjoint from $(\Sigma \cap M) \setminus D$.
  
Replace $D$ by $F_q$ and write $\Sigma_*=(\Sigma \setminus D) \cup F_q$ (which is only a Lipschitz surface), and $F_{q,\epsilon}=\{x \in \tilde{M}:d(x,F_q)<\epsilon\}$, for each $\epsilon>0$, we can select a continuous tangential isotopy $\{\varphi_s\}_{s \in [0,1]} \in \mathfrak{Is}_{\text{tan}}(U)$ such that $\varphi_s(F_{q,\epsilon}) \subset F_{q,\epsilon}$, $\varphi_s(x)=x$ for $x \notin F_{q,\epsilon}$, furthermore, 
\begin{equation}
\mathcal{H}^2((\Sigma_* \cap F_{q,\epsilon}) \cap M) \leq \mathcal{H}^2(\varphi_1(\Sigma_* \cap F_{q,\epsilon}) \cap M) \leq \mathcal{H}^2((\Sigma_* \cap F_{q,\epsilon}) \cap M) + \epsilon
\end{equation}
and
\begin{equation}
\varphi_1(\Sigma_* \cap F_{q,\epsilon}) \cap \partial A=\emptyset.
\end{equation}
In other words, we deform $\Sigma_*$ to detach $F_q$ from $\partial A$. Let $\hat{\Sigma}_*=\varphi_1(\Sigma_*)$ (smooth by suitably choosing $\varphi_1$), for $\epsilon$ small enough, we have
\begin{itemize}
	\item[(i)] $\hat{\Sigma}_* \cap \partial A=\cup_{j=1}^{q-1} \Gamma_j$,
	\item[(ii)] $\mathcal{H}^2((\hat{\Sigma}_* \Delta \Sigma)\cap M) < \mathcal{H}^2(D) + \mathcal{H}^2(F_q) + \epsilon$,
	\item[(iii)] $\mathcal{H}^2(\hat{\Sigma}_* \cap M) < \mathcal{H}^2(\Sigma \cap M) + \mathcal{H}^2(F_q) - \mathcal{H}^2(D) + \epsilon$.
\end{itemize}
Notice (iii) implies
\begin{itemize}
	\item[(iii)'] $E(\hat{\Sigma}_*) < E(\Sigma) + \mathcal{H}^2(F_q) - \mathcal{H}^2(D) + \epsilon$.
\end{itemize}
because Lemma 7.11 implies that
\begin{equation}
\inf_{\Sigma' \in J_U(\Sigma)} \mathcal{H}^2(\Sigma' \cap M) \leq \inf_{\Sigma' \in J_U(\hat{\Sigma}_*)} \mathcal{H}^2(\Sigma' \cap M).
\end{equation}

Taking $\epsilon < \mathcal{H}^2(F_q)$, following the same arguments in \cite{Meeks-Simon-Yau82}, we see that $\hat{\Sigma}_*$ satisfies $(H)_{q-1}$, hence $(C)_{q-1}$ holds by induction hypothesis. There must be pairwise disjoint connected genus zero surfaces $\tilde{\Delta}_1,\ldots,\tilde{\Delta}_p$ contained in $\hat{\Sigma}_* \cap U \cap M$ with $\partial \tilde{\Delta}_j \setminus \partial M \subset \partial A$, $\left( \cup_{i=1}^p \tilde{\Delta}_i \right) \cap (A \setminus \partial A) = \hat{\Sigma}_* \cap (A \setminus \partial A) \cap M$, and 
\begin{equation}
\sum_{i=1}^p \mathcal{H}^2(\tilde{\Delta}_i) \leq \sum_{j=1}^{q-1} \mathcal{H}^2(F_j) + E(\hat{\Sigma}_*).
\end{equation}
Furthermore, for any $\alpha>0$,
\begin{equation}
\mathcal{H}^2(\cup_{i=1}^p(\tilde{\Psi}_1(\tilde{\Delta}_i) \setminus \partial \tilde{\Delta}_i) \cap M \setminus (A \setminus \partial A)) < \frac{\alpha}{2} 
\end{equation} 
for some isotopy $\{\tilde{\Psi}_s\}_{s \in [0,1]} \in \mathfrak{Is}_{\text{out}}(U)$ which fixes a neighborhood of $(\hat{\Sigma}_* \cap M) \setminus \cup_{i=1}^p (\tilde{\Delta}_i \setminus \partial \tilde{\Delta}_i)$. Reversing the isotopy $\varphi$ used in (7.4) and (7.5), there are pairwise disjoint connected genus zero surfaces $\Delta_1,\ldots,\Delta_p \subset \Sigma_* \cap M=((\Sigma \cap M) \setminus D) \cup F_q$ with $\left( \cup_{i=1}^p \Delta_i \right) \cap (A \setminus \partial A)=\Sigma_* \cap (A \setminus \partial A) \cap M$, $\partial \Delta_i \setminus \partial M=\partial \tilde{\Delta}_i \setminus \partial M$, and
\begin{equation}
\sum_{i=1}^{p} \mathcal{H}^2(\Delta_i) \leq \sum_{j=1}^{q-1} \mathcal{H}^2(F_j) + E(\Sigma) + \mathcal{H}^2(F_q) - \mathcal{H}^2(D),
\end{equation}
Furthermore,
\begin{equation}
\mathcal{H}^2(\cup_{i=1}^p(\Psi_1(\Delta_i) \setminus \partial \Delta_i) \cap M \setminus (A \setminus \partial A)) < \frac{3\alpha}{4}
\end{equation} 
for some isotopy $\{\Psi_s\}_{s\in [0,1]} \in \mathfrak{Is}_{\text{out}}(U)$ which fixes a neighborhood of $(\Sigma_* \cap M) \setminus (\cup_{i=1}^p(\Delta_i \setminus \partial \Delta_i) \cup F_{q,\epsilon})$. 

Recall that $K$ is the compact set in $U \cap M$ bounded by $D\cup F_q$, by Lemma 7.11, there exists a continuous isotopy $\{ \beta_s\} \in \mathfrak{Is}_{\text{out}}(U)$ supported on a neighborhood of $K$ fixing $\Gamma_q=\partial D \setminus \partial M$ and 
\begin{equation}
\mathcal{H}^2( (\beta_1(D) \Delta F_q) \cap M) < \frac{\alpha}{8}.
\end{equation} 
Moreover, we know that $(\Sigma \setminus D) \cap K=\emptyset$ because $\Sigma_0=\emptyset$. Consider the following two cases: (i) $F_q \subset \cup_{i=1}^p \Delta_i$; and (ii) $F_q \not \subset \cup_{i=1}^p \Delta_i$.

In case (i), if $F_q \subset \cup_{i=1}^p \Delta_i$, by taking $D_{j_0}=(\Delta_{j_0} \setminus F_q) \cup D$ for the unique $j_0$ such that $F_q \subset \Delta_{j_0}$, and we select $D_j=\Delta_j$ for all $j \neq j_0$. Also, we define a continuous outward isotopy $\hat{\varphi}=\{\hat{\varphi}_s\}$ by $\hat{\varphi}=\Psi * \beta$; by smoothing $\hat{\varphi}$ we obtain an outward isotopy $\varphi$ satisfying the required conditions. Here $\Psi * \beta$ is defined by $\Psi * \beta_s(x)=\beta_{2s}(x)$ if $0 \leq s \leq \frac{1}{2}$, and $\Psi * \beta_s(x)=\Psi_{2s-1}(\beta_1(x))$ if $\frac{1}{2} < s \leq 1$.

In case (ii), if $F_q \not \subset \cup_{i=1}^p \Delta_i$, we define the set of pairwise disjoint connected genus 0 surfaces $D_1,\ldots,D_{p+1}$ by setting $D_j=\Delta_j$, $j=1,\ldots,p$, and $D_{p+1}=D$. In this case, we define a continuous isotopy $\hat{\varphi}$ by setting $\hat{\varphi}=\hat{\beta}*(\Psi * \beta)$, where $\hat{\beta}=\{\hat{\beta}_s\}$ is a smooth outward isotopy such that $\hat{\beta}_s(x)=x$ for all $x \in (\Sigma \cap M) \setminus D$ and $s \in [0,1]$, and such that $\hat{\beta}_1(F_q) \cap M$ is a genus zero surface $\hat{D} \subset A \cap M$ with $\partial \hat{D} \setminus \partial M =\partial D \setminus \partial M$, $\hat{D} \cap \partial A = \partial D \cap \partial A$, and $\hat{D} \cap \Psi_s(\Sigma_*) =\Gamma_q$ for all $s \in [0,1]$. Such a $\hat{\beta}$ exists once we show the claim that in case (ii), there is a neighborhood $W$ of $\Gamma_q=\partial D \setminus \partial M$ such that $W \cap D \subset A$. Otherwise, we would have $W$ with $\Gamma_q \subset W$ and $W \cap (\Sigma \setminus D) \cap M \subset A \setminus \partial A$, and this would imply that $F_q \subset \cup_{i=1}^p \Delta_i$ since  $\left( \cup_{i=1}^p \Delta_i \right) \cap (A \setminus \partial A)=\Sigma_* \cap (A \setminus \partial A) \cap M$ (see the statement above (7.6)), thus contradicting we are in case (ii). By smoothing $\hat{\varphi}$ we then again obtain the required outward isotopy $\varphi$.

In each of the above cases, we have, by (7.6), that 
\begin{equation*}
\sum_{i=1}^{p} \mathcal{H}^2(\Delta_i) \leq \sum_{j=1}^{q-1} \mathcal{H}^2(F_j) + E(\Sigma) + \mathcal{H}^2(F_q) - \mathcal{H}^2(D),
\end{equation*}
and hence
\begin{eqnarray*}
\sum_{i=1}^p \mathcal{H}^2(D_i) & \leq &  \sum_{j=1}^{q-1} \mathcal{H}^2(F_j) + E(\Sigma) + \mathcal{H}^2(F_q) - \mathcal{H}^2(D) + \mathcal{H}^2(D)\\
&=&  \sum_{j=1}^{q} \mathcal{H}^2(F_j) + E(\Sigma)
\end{eqnarray*}
This proves that statement $(C)_q$ and the proof is finished by induction.

\end{proof}

We will need a replacement lemma about finite collection of genus zero surfaces with disjoint boundaries (c.f. Lemma 2 in \cite{Meeks-Simon-Yau82}).

\begin{lemma}
Let $A \subset \tilde{M}$ be a closed subset which is diffeomorphic to the unit 3-ball such that $A \cap \partial M$ is diffeomorphic to the closed unit disk. Suppose $D_1,\ldots,D_R$ are connected genus zero surfaces in $A \cap M$ with $D_i \setminus \partial D_i \subset A \setminus \partial A$ and $\partial D_i \subset \partial A \cap M$. Also, assume that $(\partial D_i \setminus \partial M) \cap (\partial D_j \setminus \partial M)=\emptyset$ and that either $D_i \cap D_j=\emptyset$ or $D_i$ intersects $D_j$ transversally for all $i \neq j$.

Then, there exists pairwise disjoint connected genus zero surfaces $\tilde{D}_1, \ldots,\tilde{D}_R$ in $A \cap M$ with $\tilde{D}_ i \setminus \partial \tilde{D}_i \subset A \setminus \partial A$, $\partial \tilde{D}_i \cap \partial A=\partial D_i \cap \partial A$ and $\mathcal{H}^2(\tilde{D}_i) \leq \mathcal{H}^2(D_i)$ for $i=1,\ldots,R$. 

\end{lemma}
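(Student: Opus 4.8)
The plan is to follow the cut-and-paste scheme of Meeks--Simon--Yau \cite{Meeks-Simon-Yau82} (their Lemma 2), adapted to the half-space situation created by $\partial M$. Since all the $D_i$ lie in $A\cap M$ with interiors in $A\setminus\partial A$, and the parts of their boundaries lying on $\partial A$ are pairwise disjoint, after an arbitrarily small preliminary perturbation of the $D_i$ by ambient isotopies fixing $\partial A$ the set $D_i\cap D_j$ becomes a compact $1$--manifold: finitely many simple closed curves together with finitely many arcs whose endpoints lie on $\partial M\cap D_i\cap D_j$. In particular no intersection point sits on $\partial A$ (any such point would lie in $\partial D_i\cap\partial D_j\cap\partial A$, which the hypotheses exclude away from $\partial M\cap\partial A$). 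I would run the argument by induction on the complexity $\mathcal{N}=\sum_{i<j}\#(D_i\cap D_j)$, the total number of connected components; if $\mathcal{N}=0$ the $D_i$ are already pairwise disjoint.

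For the inductive step I first locate a good surgery curve. Looking inside a surface $D_a$ that meets some other surface, I pick an innermost component $\gamma$ there, meaning that one of the regions of $D_a$ cut off by $\gamma$ (and, if $\gamma$ is an arc, by the corresponding arc of $\partial M\cap D_a$) is a genus zero piece $E\subset D_a$ --- a disk or a half-disk --- whose interior is disjoint from $\bigcup_k D_k$. Genericity (pairwise transversality, no triple points) ensures that along $\gamma$ only one other surface $D_b$ is met, so $\gamma\subset D_a\cap D_b$, and $\gamma$ also cuts off a genus zero region $F\subset D_b$ with $\partial F\setminus\partial M=\partial E\setminus\partial M=\gamma$. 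If $\mathcal{H}^2(E)\le\mathcal{H}^2(F)$ I surger $D_b$: replace it by $(D_b\setminus F)\cup E$, slightly smooth the resulting crease along $\gamma$ (which can be done without increasing area), and push the copy of $E$ a little off $D_a$ while staying inside $\overline{A}\cap M$. The outcome $D_b'$ is then embedded, connected, genus zero, contained in $\overline{A}\cap M$ with $\partial D_b'\cap\partial A=\partial D_b\cap\partial A$ (the surgery never touches $\partial A$), and $\mathcal{H}^2(D_b')\le\mathcal{H}^2(D_b)$; every other $D_k$ is unchanged. If instead $\mathcal{H}^2(F)<\mathcal{H}^2(E)$ but $F$ is not clean, I recurse inside $F$: replace $(D_a,\gamma)$ by $(D_b,\gamma')$ for an innermost clean region $E'\subset F$ and iterate. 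Since the region being searched loses at least one intersection curve at each step, this terminates at a pair one of which is clean and no larger in area, and the corresponding surgery then applies.

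The crucial bookkeeping point is that, because the glued-in region has interior disjoint from every $D_k$, the surgered surface satisfies $D_b'\cap D_a\subsetneq D_b\cap D_a$ (the curve $\gamma$ is destroyed and nothing is created) and $\#(D_b'\cap D_k)\le\#(D_b\cap D_k)$ for $k\neq a,b$, while all other pairwise intersections are untouched. Hence $\mathcal{N}$ drops by at least one and the induction closes. Iterating produces pairwise disjoint, connected, genus zero surfaces $\tilde D_1,\dots,\tilde D_R$ in $A\cap M$ with $\tilde D_i\setminus\partial\tilde D_i\subset A\setminus\partial A$ and $\partial\tilde D_i\cap\partial A=\partial D_i\cap\partial A$; moreover $\mathcal{H}^2(\tilde D_i)\le\mathcal{H}^2(D_i)$ for each $i$, since at each surgery a given surface is either left unchanged or has its area replaced by a no-larger amount.

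The step I expect to be the main obstacle is precisely the free-boundary bookkeeping: verifying that every ``innermost region'' really is a disk or half-disk so that genus zero is preserved by the surgery, checking that the crease smoothing and the push-offs can be arranged without leaving $M$ and without perturbing the prescribed traces on $\partial A$, and confirming that intersection arcs with endpoints on $\partial M$ produce no configuration absent in the closed case treated in \cite{Meeks-Simon-Yau82}. This is routine but must be carried out carefully; no new idea beyond \cite{Meeks-Simon-Yau82} seems to be required.
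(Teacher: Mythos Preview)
Your proposal is correct and follows the same cut-and-paste scheme as the paper (both adapting Lemma~2 of \cite{Meeks-Simon-Yau82}), but the inductive organization differs in a way worth noting. The paper first inducts on $R$: assuming the result for $R-1$ surfaces, it may suppose $D_1,\dots,D_{R-1}$ are already pairwise disjoint, so only $D_R$ meets the others and triple points never arise. It then inducts on the number $q$ of components of $D_R\cap\bigl(\cup_{i<R}D_i\bigr)$, and at each step simply selects the piece $K$ of \emph{minimum area} among all the $E_j,F_j$; minimality forces $K$ to be clean (any intersection curve in its interior would cut off something smaller), so no inner recursion is needed. Your single induction on the total intersection count $\mathcal{N}$ with an innermost-region search and a nested ``recurse inside $F$'' step achieves the same outcome, but the paper's minimum-area choice is slicker and its outer induction on $R$ makes your preliminary perturbation to eliminate triple points unnecessary. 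One small correction: your innermost region $E$ need not literally be a disk or half-disk when $D_a$ has several boundary components on $\partial A$ or $\partial M$; what matters (and what both arguments actually use) is only that $E$ and the matching piece $F$ are connected genus-zero surfaces with $\partial E\setminus\partial M=\partial F\setminus\partial M=\gamma$, which follows from planarity of the $D_i$.
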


\begin{proof}
Assume that $R \geq 2$ and that $D_1,\ldots,D_{R-1}$ are already pairwise disjoint. If we can prove the required result in this case, then the general case follows by induction on $R$.

Let $\Gamma_1,\ldots,\Gamma_q$ be pairwise disjoint Jordan curves (either closed or have boundaries on $\partial M$), not necessarily connected, such that
\begin{equation}
D_R \cap \left( \cup_{i=1}^{R-1} D_i \right) = \cup_{j=1}^q \Gamma_j,
\end{equation}
also, for each $j=1,\ldots,R$, $\Gamma_j$ divides each $D_i$ which contains $\Gamma_j$ into two genus zero surfaces (maybe disconnected) $D_i \setminus \Gamma_j=D_i' \cup D_i''$ with $\partial D_i' \setminus \partial M = \Gamma_j = \partial D_i'' \setminus \partial M$ (since $\partial D_i \cap M$ are pairwise disjoint by assumption, either $D_i'$ or $D_i''$ is disjoint from $\partial D_i \setminus \partial M$) and,  as an inductive hypothesis, assume the lemma is true whenever (7.9) holds with $q$ replaced by $q-1$ on the right hand side (with $D_1,\ldots,D_{R-1}$ still being assumed pairwise disjoint).

For each $j=1,\ldots, q$, let $E_j$ be the part of $D_R \setminus \Gamma_j$ which is disjoint from $\partial D_R \setminus \partial M$. Hence, $\partial E_j \setminus \partial M = \Gamma_j$. Let $F_j$ be the corresponding part in $\cup_{i=1}^{R-1} D_i \setminus \Gamma_j$ which is disjoint from $\cup_{i=1}^{q-1} \partial D_i \setminus \partial M$. Hence, $\partial F_j \setminus \partial M = \Gamma_j$. Let $K \subset \cup_{i=1}^R D_i$ be a genus zero surface with $\partial K \setminus \partial M=\Gamma_{j_0}$ for some $j_0$ such that 
\begin{equation*}
\mathcal{H}^2(K) \leq \min_{j=1,\ldots,q} \{ \mathcal{H}^2(E_j), \mathcal{H}^2(F_j)\}.
\end{equation*}
Let $J \neq K$ be the other genus zero surface in $\cup_{i=1}^R D_i$ such that $\partial J \setminus \partial M=\partial K \setminus \partial M=\Gamma_{j_0}$. Evidently we must have
\begin{equation}
(K \setminus \partial K) \cap \left( \cup_{i \neq i_0} D_i \right) = \emptyset,
\end{equation}
where $i_0$ is such that $K \subset D_{i_0}$. Let $i_1 \neq i_0$ be such that $J \subset D_{i_1}$ (note that then one of $i_0, i_1$ is equal to $R$), and define $\hat{D}_j=D_j$ if $j \neq j_1$ and $\hat{D}_{i_1}=(D_{i_1} \setminus J) \cup K$. By (7.10) we have that each $\hat{D}_j$ is an embedded genus zero surface, and clearly $\partial \hat{D}_j \setminus \partial M = \partial D_j \setminus \partial M$, $\mathcal{H}^2(\hat{D}_j) \leq \mathcal{H}^2(D_j)$, $\hat{D}_1,\ldots,\hat{D}_R$ are pairwise disjoint and 
\begin{equation}
\hat{D}_R \cap \left( \cup_{i=1}^{R-1} \hat{D}_{i} \right) = K \cup \left( \cup_{j \neq j_0} \Gamma_j \right).
\end{equation}
By smoothing $\hat{D}_{i_1}$ near $\Gamma_{j_0}$ and making a slight perturbation near $K$, we then obtain genus 0 surfaces $\hat{D}_1^*,\ldots,\hat{D}_R^*$ with $\partial \hat{D}_j^* \setminus \partial M = \partial D_j \setminus \partial M$, $\mathcal{H}^2(\hat{D}_j^*) \leq \mathcal{H}^2(D_j)$, $\hat{D}_1^*,\ldots,\hat{D}_{R-1}^*$ pairwise disjoint, and (using (7.11)),
\begin{equation*}
\hat{D}_R^* \cap \left( \cup_{j=1}^{R-1} \hat{D}_j^* \right) = \cup_{j \neq j_0} \Gamma_j.
\end{equation*}
Hence, we can apply the inductive hypothesis to the collection $\{\hat{D}_j^*\}$, thus obtaining the required collection $\tilde{D}_1,\ldots,\tilde{D}_R$.

\end{proof}

\subsection{Minimizing sequence of genus zero surfaces}

In this section, we recall a result by J. Jost \cite{Jost86} on the regularity for minimizers of the minimization problem for genus zero surfaces with partially free boundary.

Let $A \subset \tilde{M}$ be an admissible open set. Let $\Gamma \subset \partial A \cap M$ be an embedded smooth curve in $M$ which either meets $\partial M$ at the two endpoints transversally or is disjoint from $\partial M$. Let $\mathcal{M}=\mathcal{M}(0,\Gamma)$ be the set of all genus zero surfaces $D$ contained in $M$ with $\Gamma$ as boundary modulo $\partial M$, i.e. $\partial D \setminus \partial M=\Gamma$, and which meets $\partial M$ transversally. We say that $D_k$ is a \emph{minimizing sequence for $\mathcal{M}$} if 
\begin{equation*}
\mathcal{H}^2(D_k) \leq \inf_{D \in \mathcal{M}} \mathcal{H}^2(D) + \epsilon_k
\end{equation*}
for some positive real numbers $\epsilon_k \to 0$ as $k \to \infty$.

\begin{theorem}[\cite{Jost86}]
Using the notation above, let $D_k \in \mathcal{M}$ be a minimizing sequence for $\mathcal{M}$, and suppose $D_k$ converges to $V$ in the sense of varifolds in $M$. Then for each point $x_0 \in $supp$\|V\| \cap \partial M$, there are $n \in \mathbb{N}$, $\rho>0$ (both depending on $x_0$) and an embedded minimal surface $\Gamma$ in $M$ meeting $\partial M$ orthogonally with 
\begin{equation*}
V \llcorner_{B_\rho(x_0)} = n v(\Gamma)
\end{equation*}
where $v(\Gamma)$ is the varifold represented by $\Gamma$ with multiplicity one.
\end{theorem}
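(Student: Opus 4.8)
The plan is to follow the replacement strategy of Almgren--Simon \cite{Almgren-Simon79} and Meeks--Simon--Yau \cite{Meeks-Simon-Yau82}, in the free boundary formulation of Gr\"uter--Jost \cite{Gruter-Jost86} and Jost \cite{Jost86}. First I would record that, since every tangential isotopy of $M$ fixing $\Gamma$ carries $\mathcal{M}$ into itself, the limit $V$ is freely stationary in $\mathrm{int}(M)$ away from $\Gamma$; hence the monotonicity formula (3.2) applies, and the density $\theta(x,V)$ is well defined and bounded below by a fixed positive constant on $\mathrm{supp}\,\|V\|$. Fix $x_0\in\mathrm{supp}\,\|V\|\cap\partial M$. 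In the main case $x_0\notin\overline{\Gamma}$ I would choose $\rho>0$ small (to be shrunk finitely often and relabelled at the end) so that the half-ball $H:=B_\rho(x_0)\cap M$ is disjoint from $\overline{\Gamma}$ and small enough for the monotonicity and Allard-type estimates of \cite{Gruter-Jost86} to take hold; the remaining points $x_0\in\overline{\Gamma}\cap\partial M$, where a fixed and a free boundary meet, would be handled by the same scheme with $H$ enlarged to contain the relevant arc of $\Gamma$, as in \cite{Jost86}.

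Next I would construct \emph{replacements}. For each $k$, minimise $\mathcal{H}^2(\,\cdot\,\cap M)$ over all genus zero surfaces that coincide with $D_k$ outside $H$, using the surgeries of Theorem 7.10 and Lemma 7.12 to keep the competing surfaces embedded and of genus zero, and Lemma 7.11 to realise the relevant families of small genus zero pieces by outward isotopies. Since the modification is confined to $H$ while $\Gamma\subset\partial A$ lies outside $H$, the minimiser $\hat{D}_k$ again belongs to $\mathcal{M}$, so $\mathcal{H}^2(D_k)-\epsilon_k\le\mathcal{H}^2(\hat{D}_k)\le\mathcal{H}^2(D_k)$, and $\hat{D}_k$ solves, inside $H$, a local area minimisation problem with partially free boundary. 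Interior regularity of $\hat{D}_k$ in $H$ follows from the localised Almgren--Simon theory (Proposition 3.3 of \cite{DeLellis-Pellandini10}); the regularity of $\hat{D}_k$ up to the free boundary $\partial M\cap H$ is the crucial point, and here I would run the argument of \cite{Gruter-Jost86} and \cite{Jost86}: reflect $H$ across $\partial M$ to obtain an ambient manifold whose metric is merely Lipschitz along the reflection hypersurface, so that $\hat{D}_k$ becomes a two-sided minimiser there; the monotonicity formula (3.2) together with a comparison argument forces the density of $\hat{D}_k$ at a free boundary point to be close to $1$ once $\rho$ is small, and an Allard-type $\varepsilon$-regularity theorem valid in this Lipschitz setting shows that $\hat{D}_k\cap H$ is a smooth, properly embedded, stable minimal surface meeting $\partial M$ orthogonally.

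The passage to the limit would then go as follows. Since $\hat{D}_k\cap H$ is stable with uniformly bounded area, the Gr\"uter--Jost compactness theorem (Lemma 3.5) yields a subsequence converging smoothly on $B_{\rho/2}(x_0)\cap M$ to a smooth, properly embedded, stable minimal surface $\Gamma$ meeting $\partial M$ orthogonally, with a locally constant multiplicity $n$; shrinking $\rho$ so that $\Gamma$ is connected makes $n$ a single positive integer. Because $\hat{D}_k$ and $D_k$ agree outside $H$ and $\mathcal{H}^2(D_k)-\epsilon_k\le\mathcal{H}^2(\hat{D}_k)\le\mathcal{H}^2(D_k)$, the masses satisfy $\|\hat{D}_k\|(B_\rho(x_0)\cap M)\to\|V\|(B_\rho(x_0)\cap M)$ for a.e.\ $\rho$, and the replacement and maximum-principle arguments of \cite{Meeks-Simon-Yau82} and \cite{Jost86} then force $\mathrm{supp}\,\|V\|$ to coincide with $\Gamma$ on $B_{\rho/2}(x_0)$; the constancy theorem for a freely stationary varifold supported on a connected smooth minimal surface (cf.\ \cite{Simon83}) now gives $V\llcorner_{B_{\rho/2}(x_0)}=n\,v(\Gamma)$. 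Relabelling $\rho/2$ as $\rho$ completes the argument.

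I expect the main obstacle to be the free boundary regularity of the local minimisers $\hat{D}_k$: with no convexity hypothesis on $\partial M$ there is no barrier keeping the interior of a minimiser off $\partial M$, so the usual convex-domain arguments fail, and one is forced to work with the reflected, merely Lipschitz ambient metric, relying on the monotonicity formula (3.2) and a carefully adapted Allard-type $\varepsilon$-regularity theorem. This is precisely the technical core of \cite{Gruter-Jost86} and \cite{Jost86}, which is why the result above is quoted from there rather than reproved here.
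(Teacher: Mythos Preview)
The paper does not give a proof of this statement: Theorem 7.13 is stated as a citation of Jost \cite{Jost86}, introduced with ``we recall a result by J.\ Jost,'' and is then used as a black box in the proof of Theorem 7.3. Your final paragraph correctly recognises this, so in that sense your proposal and the paper agree.

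Your sketch of how the Jost argument runs is broadly accurate --- local replacement by a minimiser in a half-ball, free-boundary monotonicity and reflected Allard regularity from \cite{Gruter-Jost86}, then stable compactness (Lemma 3.5) and the constancy theorem to identify $V$ with $n\,v(\Gamma)$. One caution on logical structure: you invoke Theorem 7.10 and Lemmas 7.11--7.12 of the present paper inside the construction of the local replacements $\hat{D}_k$. In the paper's architecture those results sit \emph{alongside} Theorem 7.13, not beneath it: they are the $\gamma$-reduction machinery used in the proof of Theorem 7.3 to reduce a general minimising sequence to the genus-zero situation, after which Theorem 7.13 is applied. Since the $D_k$ in Theorem 7.13 are already genus-zero surfaces in $\mathcal{M}(0,\Gamma)$, Jost's own argument does not need the full $\gamma$-reduction apparatus; the local replacement step there proceeds by direct disk/half-disk comparison (the free-boundary analogue of Almgren--Simon), which is lighter than Theorem 7.10. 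Your sketch is not wrong, but citing those particular results here blurs the dependency graph.
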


\subsection{Convergence of the minimizing sequence}

In this section, we prove the main regularity result (Theorem 7.3). 

\begin{proof}[Proof of Theorem 7.3]
Let $\{\Sigma_k\}$ be a minimizing sequence for the minimization problem $(\Sigma, \mathfrak{Is}_{\text{out}}(U))$ with $\Sigma_k \cap M$ converging weakly to a varifold $V$ in $M$ and $\Sigma_k$ intersects $\partial M$ transversally for each $k$. Using the same argument as in \cite{Meeks-Simon-Yau82}, we can assume that $(\Sigma_k)_0 = \emptyset$ for all $k$ (see the paragraph above Theorem 7.10 for definition of $(\Sigma_k)_0$) and $\Sigma_k$ is strongly $(\gamma,U)$-irreducible for all sufficiently large k, for some fixed $0<\gamma<\delta^2/9$. Furthermore, we have 
\begin{equation}
\mathcal{H}^2(\Sigma_k \cap M) \leq \inf_{\Sigma \in J_U(\Sigma_k)} \mathcal{H}^2(\Sigma \cap M) + \epsilon_k
\end{equation}
where $\epsilon_k \to 0$ as $k \to \infty$.

As noted before, interior regularity and regularity at fixed boundary have been discussed in \cite{Meeks-Simon-Yau82} and \cite{DeLellis-Pellandini10}. So we only have to prove regularity at the free boundary.

Let $x_0 \in $supp$(\|V\|) \cap U \cap \partial M$, and $\nu_0$ be the outward unit normal at $x_0 \in \partial M$. Define $x_1=\text{exp}_{x_0}(\epsilon \nu_0)$ be a point outside $M$ which is very close to $x_0$ (by choosing $\epsilon$ very small).  Let $\rho_0>0$ be chosen small so that all the geodesic balls $B_\rho(x_1)$ in $\tilde{M}$ are admissible open sets in the sense of Definition 7.1 for all $0<\rho \leq \rho_0$. Note that we have to move the center of the balls from $x_0$ to $x_1$ in order for $(v)$ of Definition 7.1 to hold. 

First of all, we want to show that $V$ is freely stationary in $U$. Let $X \in \chi_{\text{tan}}$ be supported in $U$, and $\{\varphi_s\}_{s \in (-\epsilon,\epsilon)}$ be the isotopy generated by $X$. By (7.12),
\begin{equation}
\mathcal{H}^2(\Sigma_k \cap M) \leq \mathcal{H}^2(\varphi_s(\Sigma_k) \cap M) +\epsilon_k 
\end{equation}
for all $k$. Note that $\varphi_s(\Sigma_k) \cap M = \varphi_s(\Sigma_k \cap M)$ since $\{\varphi_s\} \in \mathfrak{Is}_{\text{tan}}(U)$, take $k \to \infty$ in (7.13), we get
\begin{equation}
\|V\|(M) \leq \| (\varphi_s)_\sharp V\|(M)
\end{equation}
for all $s \in (-\epsilon,\epsilon)$. This shows that $V$ is stable with respect to $\mathfrak{Is}_{\text{tan}}$, so $V$ is freely stationary. Therefore, the monotonicity formula in \cite{Gruter-Jost86} can be applied to $V$.

By the coarea formula, we have 
\begin{equation*}
\int_{\rho-\sigma}^\rho \mathcal{H}^1(\Sigma_k \cap \partial B_s(x_1) \cap M) \; ds \leq \mathcal{H}^2(\Sigma_k \cap (B_\rho(x_1) \setminus B_{\rho-\sigma}(x_1)) \cap M)
\end{equation*}
for almost every $\rho \in (0,\rho_0)$ and every $\sigma \in (0,\rho)$, where $B_s(x_1)$ is the closed geodesic ball in $\tilde{M}$ of radius $s$ centered at $x_1$. Taking $\sigma=\rho/2$, the monotonicity formula in \cite{Gruter-Jost86} gives
\begin{equation*}
\int_{\rho/2}^\rho \mathcal{H}^1(\Sigma_k \cap \partial B_s(x_1) \cap M) \; ds \leq c \rho^2
\end{equation*}
for all sufficiently large $k$, where $c$ depends only on $M$ and $\tilde{M}$ and any upper bound for $\rho_0^{-2} (\|V\|(B_{\rho_0}(x_1))+\|V\|(\tilde{B}_{\rho_0}(x_1)))$ (recall that $\tilde{B}$ is the reflection of $B$ across $\partial M$, see \cite{Gruter-Jost86} for definition). Hence, we can find a sequence $\{\rho_k\} \subset (3\rho/4,\rho)$ such that $\Sigma_k$ intersects $\partial B_{\rho_k}(x_1)$ transversally and such that 
\begin{equation}
\mathcal{H}^1(\Sigma_k \cap \partial B_{\rho_k}(x_1) \cap M) \leq c \rho \leq c \eta \rho_0
\end{equation}
for all sufficiently large $k$, provided $\rho \leq \eta \rho_0$, where for the moment $\eta \in (0,1)$ is arbitrary. If $\eta$ is sufficiently small, we see from (7.15) that Theorem 7.10 is applicable. Hence, there are connected genus zero surfaces $D_k^{(1)},\ldots,D_k^{(q_k)} \subset \Sigma_k \cap M$ and for any $\alpha>0$, isotopies $\{ \varphi_t^{(k)}\}_{t \in [0,1]} \in \mathfrak{Is}_{\text{out}}(B_{\rho_0}(x_1))$ such that 
\begin{equation}
\partial D_k^{(j)} \setminus \partial M \subset \partial B_{\rho_k}(x_1),
\end{equation}
\begin{equation*}
\Sigma_k \cap B_{\rho_k}(x_1)\cap M = \left( \cup_{j=1}^{q_k} D_k^{(j)} \right) \cap B_{\rho_k}(x_1),
\end{equation*}
\begin{equation}
\mathcal{H}^2(\cup_{j=1}^{q_k}(\varphi_1^{(k)}(D_k^{(j)}) \setminus \partial D_k^{(j)}) \cap M \setminus (B_{\rho_k}(x_1) \setminus \partial B_{\rho_k}(x_1))) < \alpha,
\end{equation}
and
\begin{equation*}
\sum_{j=1}^{q_k} \mathcal{H}^2(D_k^{(j)}) \leq c \rho^2 \leq c \eta^2 \rho_0^2,
\end{equation*}
where $c$ is independent of $k, \eta$ and $\rho$. Since $\mathcal{H}^2(D_k^{(j)}) \leq c\eta^2 \rho_0^2$, we know that for $\eta$ sufficiently small, by the modified replacement lemma with free boundary (see Lemma 4.4 in \cite{Jost86}), there are connected genus zero surfaces $\tilde{D}_k^{(j)}$ contained in $M$ with 
\begin{equation}
\partial \tilde{D}_k^{(j)} \setminus \partial M = \partial D_k^{(j)} \setminus \partial M, \; \tilde{D}_k^{(j)} \setminus \partial \tilde{D}_k^{(j)} \subset B_{\rho_k}(x_1),
\end{equation}
and 
\begin{equation}
\mathcal{H}^2(\tilde{D}_k^{(j)}) \leq \mathcal{H}^2(D_k^{(j)}).
\end{equation}
Combining (7.12) and (7.19), using Lemma 7.12, (7.16), (7.17) and (7.18), $D^{(j)}_k$ is a minimizing sequence among all genus zero surfaces with fixed boundary $\partial D^{(j)}_k$ with any number of free boundaries on $\partial M$. By Theorem 7.13, we know that for each $x_0 \in $supp$(V) \cap \partial M$, there exist $n \in \mathbb{N}$ and $\rho>0$ and an embedded minimal surface $\Sigma$ meeting $\partial M$ orthongonally with $V=n\Sigma$ on $B_\rho(x_0)$. This finishes the proof of Theorem 7.3.
\end{proof}


\section{Regularity of outer almost minimizing varifolds}

In this section, we define the notion of \emph{good replacement property} for freely stationary varifolds and prove that if there exists sufficiently many replacements, then the varifold must be a smooth minimal surface with free boundary. In the second half, we will describe how to construct these replacements for outer almost minimizing varifolds.

\begin{definition}
Let $V \in \mathcal{V}(M)$ be a freely stationary varifold and $U \subset \tilde{M}$ be an open subset. We say that $V' \in \mathcal{V}(M)$ is a \emph{replacement} for $V$ in $U$ if and only if 
\begin{enumerate}
	\item $V'$ is freely stationary;
	\item $V'=V$ on $G(M \setminus U)$ and $\|V'\|(M)=\|V\|(M)$; 
	\item $V' \llcorner_{(U\cap M)}$ is (an integer multiple of) a smooth stable properly embedded (not necessarily connected) minimal surface $\Sigma \subset M$ meeting $\partial M$ orthogonally. Here, $\Sigma$ is stable means that the second variation is nonnegative with respect to isotopies $\{\varphi_t\}_{t \in (-\epsilon, \epsilon)}$ supported in $U$ and $\varphi_t(M)=M$ for all $t$.
	\end{enumerate}
\end{definition}

\begin{definition}
Let $V \in \mathcal{V}(M)$ be freely stationary and $U \subset \tilde{M}$ be an open subset. We say that $V$ has \emph{good replacement property in $U$} if and only if all the following hold:
\begin{itemize}
	\item[(a)] There is a positive function $r_1:U \to \mathbb{R}$ such that for every annulus An$_1 \in \mathcal{A}_{r_1(x)}(x)$, there is a replacement $V'$ for $V$ in An$_1$ such that (b) holds;
	\item[(b)] There is another positive function $r_2:U \to \mathbb{R}$ such that 
		\begin{itemize}
			\item[(i)] $V'$ has a replacement $V''$ in any An$_2 \in \mathcal{A}_{r_1(x)}(x)$, for the same $x$ and $r_1$ as in (a), and $V''$ satisfies (c) below;
			\item[(ii)] $V'$ has a replacement in any An$\in \mathcal{A}_{r_2(y)}(y)$ for any $y \in U$.
		\end{itemize} 
	\item[(c)] There is yet another positive function $r_3:U \to \mathbb{R}$ such that $V''$ has a replacement in any An$\in \mathcal{A}_{r_3(z)}(z)$ for any $z \in U$. 
\end{itemize}
\end{definition}

The key result in this section is the following regularity theorem.

\begin{theorem}
If $V$ has good replacement property in an open set $U \subset \tilde{M}$, then $V$ is a smooth embedded minimal surface in $U \cap $int$(M)$ with smooth free boundary on $U \cap \partial M$. 
\end{theorem}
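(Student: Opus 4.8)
### Proof strategy for Theorem 8.3

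The plan is to follow the scheme of Colding--De Lellis (sections 6--7 of \cite{Colding-DeLellis03}), but carried out simultaneously in the interior of $M$ and along the free boundary $U \cap \partial M$, using the replacements produced by Theorem 7.3 in place of the interior-only replacements of \cite{Colding-DeLellis03}. The key point is that the good replacement property of Definition 8.2 has been set up precisely so that one can perform the \emph{iterated replacement} argument: replace $V$ on a small annulus to get $V'$ which is a smooth stable minimal surface away from the center, then replace $V'$ on a \emph{different} small annulus at the same point (and at nearby points) to get $V''$, and finally replace $V''$ once more. Each replacement is, on the annulus where it is performed, a smooth stable properly embedded minimal surface with free boundary on $\partial M$; the compactness Lemma 3.5 of Gr\"uter--Jost then controls limits of these pieces.

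First I would establish that $V$ has a well-defined density $\theta(x,V)$ at every $x \in U$, including $x \in U \cap \partial M$: in the interior this is the standard Allard monotonicity, and at the boundary it is (3.2), the Gr\"uter--Jost monotonicity formula for freely stationary varifolds; so $\theta$ is upper semi-continuous and bounded. Next, using replacement $V'$ on an annulus $\mathrm{An}_1 = \mathrm{An}(x,s,t)$, one shows that $V$ equals a smooth minimal surface on the annulus, and that as the inner radius $s \to 0$ the surfaces $\Sigma'_s$ converge (by Lemma 3.5, with uniform area bounds coming from monotonicity) to a minimal surface $\Sigma'$ which is smooth in the punctured ball $B_t(x) \setminus \{x\}$ and which agrees with $V$ outside $\mathrm{An}_1$; a removable singularity argument (Allard's theorem interiorly, and the Gr\"uter--Jost version of Allard's boundary regularity for free-boundary stationary varifolds when $x \in \partial M$) then shows $\Sigma'$ extends smoothly across $x$. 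This produces a replacement $V'$ that is itself a \emph{smooth} stable minimal surface in all of $B_t(x)$ (respectively $B_t(x) \cap M$ with free boundary), of constant integer multiplicity near $x$.

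The heart of the argument is then the tangent-cone / gluing step. One shows that if $V'$ is a replacement of $V$ on an annulus and $V''$ a replacement of $V'$ on a concentric annulus, then $V'$ and $V''$ glue together smoothly across the overlap: both are smooth minimal surfaces there, they have the same area and the same boundary data, so by unique continuation for minimal surfaces they coincide. Iterating, $V$ itself is realized, on a small ball around each point, as a smooth minimal surface; at a boundary point the corresponding minimal surface meets $\partial M$ orthogonally because each replacement does and orthogonality is preserved under the smooth convergence of Lemma 3.5. Finally one shows that $V$ has multiplicity locally constant and hence, by connectedness, that supp$\|V\| \cap U$ is an embedded smooth surface (with free boundary); this is exactly the multiplicity-one-near-touching-points argument of \cite{Colding-DeLellis03}, again using Lemma 3.5 to rule out transverse intersections or tangential touching of sheets, now also allowing sheets meeting $\partial M$.

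The main obstacle I expect is the boundary removable-singularity and boundary-regularity step: one must verify that the Gr\"uter--Jost monotonicity (3.2) and their free-boundary version of Allard's regularity theorem apply to the limit varifolds arising here (which are stationary for the $M$-mass under $\chi_{\mathrm{tan}}$-deformations, i.e. freely stationary, but whose support may a priori touch $\partial M$ from the interior at an isolated point), and that the smooth convergence in Lemma 3.5 is genuinely up to and across the free boundary so that orthogonality passes to the limit. A secondary subtlety is bookkeeping the three radius functions $r_1, r_2, r_3$ of Definition 8.2 so that the nested replacements $V \rightsquigarrow V' \rightsquigarrow V'' \rightsquigarrow V'''$ can all be performed on overlapping annuli centered at the relevant points; but this is organizational rather than substantive, and follows \cite{Colding-DeLellis03} once the boundary analogues of the monotonicity and Allard theorems are in hand via \cite{Gruter-Jost86}.
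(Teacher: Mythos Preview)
Your overall strategy---iterated replacements \`a la Colding--De Lellis, with the free-boundary tools of Gr\"uter--Jost (monotonicity, Allard-type regularity, and the stable-surface compactness of Lemma 3.5) substituted in at boundary points---is indeed the paper's approach. However, two of your key steps are not correctly described, and one of them hides the real reason the \emph{third} level of replacement in Definition 8.2 is needed.

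\textbf{The gluing step.} You write that $V'$ and $V''$ ``glue together smoothly across the overlap'' because ``both are smooth minimal surfaces there, they have the same area and the same boundary data, so by unique continuation \dots they coincide.'' But there is no overlap on which both are a priori smooth: the second replacement $V''$ is performed on $\mathrm{An}(x,s,t)$ with $t\in(\rho,2\rho)$, so $V''=\Sigma''$ is smooth on $\mathrm{An}(x,s,t)$ while on $\mathrm{An}(x,t,2\rho)$ one only knows $V''=V'=\Sigma'$. The gluing is therefore \emph{across the sphere} $\partial B_t(x)$, and the argument is a tangent-cone analysis at each $y\in\Sigma'\cap\partial B_t(x)$: one must show that every tangent cone of $V''$ at $y$ is a (half-)plane, and that it coincides with $T_y\Sigma'$, so that $\Sigma''$ is a $C^1$ graph over $T_y\Sigma'$ near $y$; elliptic regularity (with a reflection across $\partial M$ when $y\in\partial M$) then upgrades this to $C^\infty$. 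Knowing that the tangent cones of $V''$ are (half-)planes is exactly the content of Proposition 8.6, and applying it to $V''$ requires $V''$ itself to admit replacements in small annuli---this is property (c) of Definition 8.2. You mention ``replace $V''$ once more'' in passing, but never use it; without it the gluing step has no justification.

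\textbf{The single-replacement limit.} Your second paragraph proposes to let the inner radius $s\to 0$ in a \emph{single} replacement $V'_s$ on $\mathrm{An}(x,s,t)$ and take a stable limit via Lemma 3.5. Even if this produces a smooth surface $\Sigma'$ in $B_t(x)\setminus\{x\}$, it does not show $\Sigma'=V$ there: the identity $V'_s=V$ holds only on $B_s(x)$, and this information evaporates as $s\to 0$. The paper instead fixes the first replacement $V'$ on $\mathrm{An}(x,\rho,2\rho)$ once and for all, and for each $s<\rho$ takes a \emph{second} replacement $V''_s$ of $V'$ on $\mathrm{An}(x,s,t)$; the gluing across $\partial B_t(x)$ shows $\Sigma''_s$ extends $\Sigma'$, and unique continuation from $\Sigma'$ forces $\Sigma''_{s_1}=\Sigma''_{s_2}$ on their common domain, so the union $\Sigma=\bigcup_s\Sigma_s$ is well-defined. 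Crucially, $V''_s=V'=V$ on $B_s(x)$ for every $s$, which is what lets one conclude $V=\Sigma$ on $B_\rho(x)\setminus\{x\}$ (via Lemma 8.5).

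\textbf{Removable singularity at $x\in\partial M$.} Beyond the Gr\"uter--Jost Allard theorem, the paper uses Proposition 8.6 once more to see that tangent cones at $x$ are half-planes orthogonal to $T_x\partial M$, decomposes $\Sigma$ near $x$ into finitely many Lipschitz half-disk graphs, removes the singularity in each, and then invokes the Hopf boundary lemma to force $N=1$. Your proposal does not account for this last step.
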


The proof of interior regularity can be found in \cite{Colding-DeLellis03}. Therefore, we will only prove regularity at the free boundary. To prove Theorem 8.3, we first state a generalization of two lemmas from \cite{Colding-DeLellis03} adapted to the free boundary setting.

\begin{lemma}
Let $U$ be an open set of $\tilde{M}$ and $x \in \partial M \cap U$. Then, there exists some $R>0$ such that $B_R(x) \cap \partial U =\emptyset$ and there does not exist any $W \in \mathcal{V}(M)$ with support contained in $B_R(x)$ and freely stationary in $U$.
\end{lemma}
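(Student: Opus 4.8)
The plan is to mimic the interior non-existence statement, Lemma 6.3 of \cite{Colding-DeLellis03}, whose proof tests a stationary varifold against the radial vector field centered at $x$, on which $\text{div}_S$ is close to $2$. The new difficulty in the free boundary setting is that the radial field based at a boundary point $x\in\partial M$ is \emph{not} tangent to $\partial M$, so it does not lie in $\chi_{\text{tan}}$ and cannot be fed into the first variation identity (3.1) for a freely stationary varifold. The heart of the argument will therefore be to produce a single vector field near $x$ that is tangent to $\partial M$ \emph{and} has strictly positive generalized divergence on every $2$-plane.

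First I would straighten the boundary: since $U$ is open and $\partial M$ is a smooth embedded hypersurface of $\tilde M$ with $M$ locally one of its two sides, choose coordinates $(u^1,u^2,u^3)$ on a neighborhood $\Omega\subset U$ of $x$ with $x=0$, $\partial M\cap\Omega=\{u^3=0\}$ and $M\cap\Omega=\{u^3\ge 0\}$, and set $X_0=\sum_{i=1}^3 u^i\,\partial_{u^i}$ on $\Omega$. Two facts make $X_0$ the right object. Along $\{u^3=0\}$ its $\partial_{u^3}$-component $u^3\partial_{u^3}$ vanishes, so $X_0$ is tangent to $\partial M$. And since $X_0(x)=0$ while $\overline\nabla_v X_0|_x=v$ for every $v\in T_x\tilde M$, one gets $\text{div}_S X_0=2$ at $x$ for \emph{every} $2$-plane $S\subset T_x\tilde M$. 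Since $(y,S)\mapsto\text{div}_S X_0$ is continuous on $G(\tilde M)$ over $\Omega$ and the fiber of $G(\tilde M)$ over $x$ is compact, I can pick $R>0$ with $\overline{B_R(x)}\subset\Omega$ and $R<\mathrm{dist}(x,\partial U)$ — so automatically $B_R(x)\cap\partial U=\emptyset$ — and with $\text{div}_S X_0\ge 1$ for every $2$-plane over every point of $B_R(x)$.

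Next I would cut off: fix $\phi\in C_c^\infty(\Omega)$ with $\phi\equiv 1$ on $B_R(x)$ and put $X=\phi X_0$, a smooth vector field on all of $\tilde M$ that is still tangent to $\partial M$ (tangency survives multiplication by a scalar, and $X\equiv 0$ off $\Omega$), hence $X\in\chi_{\text{tan}}$, and is supported in $\Omega\subset U$. Suppose now, toward a contradiction, that there is a nonzero $W\in\mathcal V(M)$ with $\mathrm{supp}\,W\subset B_R(x)$ that is freely stationary in $U$. Because $X\in\chi_{\text{tan}}$ is supported in $U$, $\delta_M W(X)=0$; but $X=X_0$ on $\mathrm{supp}\,W$, so (3.1) yields $0=\delta_M W(X)=\int_{G(M)}\text{div}_S X_0\,dW\ge\|W\|(M)>0$, a contradiction. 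Hence no such $W$ exists.

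The only genuine obstacle is the construction in the second paragraph: verifying that the dilation field in boundary-adapted coordinates is simultaneously tangent to $\partial M$ and has divergence bounded below by a positive constant on a whole ball $B_R(x)$ (not merely at its center). The cutoff, the use of the first variation formula, and the final contradiction are routine and exactly parallel \cite{Colding-DeLellis03}. As there, the statement is to be read for \emph{nontrivial} $W$, the zero varifold being trivially freely stationary.
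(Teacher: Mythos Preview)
Your proof is correct, and it takes a cleaner route than the paper's. The paper argues by the maximum principle: it takes the smallest ball $B_r(x)$ containing $\mathrm{supp}\,W$, notes that the support then touches $\partial B_r(x)\cap M$ at some point $y$, and invokes the varifold maximum principle (Lemma~B.1 of \cite{Colding-DeLellis03} or Theorem~1 of \cite{White10}) at $y$, remarking that in the general Riemannian case the test field must be perturbed (via the nearest-point projection onto $\partial M$) to become tangential. Your approach bypasses this two-step structure entirely: by writing the dilation field $X_0=\sum u^i\partial_{u^i}$ in boundary-adapted coordinates, you obtain a field that is \emph{automatically} in $\chi_{\text{tan}}$ (since its normal component $u^3\partial_{u^3}$ vanishes on $\{u^3=0\}$) and whose covariant derivative at $x$ is the identity, so $\mathrm{div}_S X_0=2$ at $x$ for every $2$-plane $S$; continuity over the compact Grassmannian fiber then gives the uniform lower bound $\mathrm{div}_S X_0\ge 1$ on a ball. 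This is more self-contained---no black-box maximum principle, no ad hoc perturbation---and handles the tangency requirement in one stroke rather than as an afterthought. Both arguments are short, but yours makes the mechanism (positive $S$-divergence forcing $\delta_M W(X)>0$) fully explicit.
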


\begin{proof}
We prove the lemma in the case $U=B_1(0) \subset \mathbb{R}^3$ and $M \cap U=B_1(0) \cap \{z \geq 0\}$. The proof for the general case is similar. Fix $x \in \partial M \cap U$, let $R>0$ be small enough (smaller than the convexity radius of $x$ in the general case) such that $B_R(x) \subset \subset B_1(0)$. If the lemma is false, then there exists a freely stationary varifold $W$ in $U \cap M$ which is supported in $B_R(x)$. Choose $r>0$ to be the smallest radius such that $B_r(x)$ contains the support of $W$, then the support of $W$ either touches $\partial B_r(x) \cap M$ at a point $y$ in the interior of $M$ or on the boundary $\partial M$. Either case cannot happen by the maximum principle (see Lemma B.1 of \cite{Colding-DeLellis03} or Theorem 1 of \cite{White10}). Note that in the general case, we have to perturb by a projection (see \cite{Gruter-Jost86}) the variation field to make it tangent to $\partial M$, but the perturbation would be small if we choose $R>0$ small enough, so we would still arrive at a contradiction.
\end{proof}

Given a varifold $V$ and a point $y \in \tilde{M}$, we let $T(y,V)$ be the set of varifold tangents of $V$ at $y$ (see section 42 of \cite{Simon83}).

\begin{lemma}
Let $x \in \tilde{M}$, and $V \in \mathcal{V}(M)$ be a freely stationary integer rectifiable varifold. Assume $T$ is the subset of supp$(\|V\|)$ defined by
\begin{equation*}
T=\{ y \in \text{supp}(\|V\|) : T(y,V) \text{ consists of a plane transversal to } \partial B_{d(x,y)}(x) \}.
\end{equation*}
If $\rho$ is less than the injectivity radius inj$(\tilde{M})$ of $\tilde{M}$, then $T$ is dense in supp$(\|V\|) \cap B_\rho(x) \setminus \partial M$.
\end{lemma}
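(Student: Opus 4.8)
The plan is to adapt the classical density argument for stationary integer rectifiable varifolds (see Section 42 of \cite{Simon83} and the corresponding step in \cite{Colding-DeLellis03}) to the free boundary setting, using the Gr\"uter-Jost monotonicity formula (3.2) to handle points on $\partial M$. Fix $x$ and $\rho < \text{inj}(\tilde M)$, and suppose for contradiction that $T$ is not dense in $\text{supp}(\|V\|) \cap B_\rho(x) \setminus \partial M$. Then there is a point $y_0 \in \text{supp}(\|V\|) \cap B_\rho(x) \setminus \partial M$ and a radius $s>0$ with $\overline{B_s(y_0)} \subset B_\rho(x) \setminus \partial M$ such that $B_s(y_0) \cap T = \emptyset$. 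The first step is to note that since $V$ is freely stationary and integer rectifiable, it is (interior-)stationary in a neighborhood of any point away from $\partial M$; in particular $V \llcorner_{B_s(y_0)}$ is a stationary integer rectifiable varifold in $B_s(y_0) \subset \text{int}(M)$, so by Allard's rectifiability and the existence of varifold tangents (Section 42 of \cite{Simon83}), at $\|V\|$-a.e. point $y \in B_s(y_0)$ the tangent cone $T(y,V)$ exists, is unique, and is a plane (with integer multiplicity).

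The heart of the argument is then the geometric observation that, for $\|V\|$-a.e. $y$, the approximate tangent plane at $y$ must be transversal to the sphere $\partial B_{d(x,y)}(x)$ through $y$ — otherwise it would be tangent to that sphere, and a measure-theoretic coarea/Sard-type argument shows this can happen only on a set of measure zero. Concretely, I would use the coarea formula applied to the distance function $r(y) = d(x,y)$ restricted to the rectifiable set $\text{supp}(\|V\|)$: the set of points where the approximate tangent plane is tangent to the level set of $r$ is exactly the set where the tangential gradient $\nabla^{\text{tan}} r$ vanishes, and this set maps under $r$ to a Lebesgue-null set in $\mathbb{R}$ (by the coarea formula, $\int |\nabla^{\text{tan}} r| \, d\|V\| = \int \mathcal{H}^1(\text{supp}\|V\| \cap r^{-1}(t)) \, dt$, which forces $\nabla^{\text{tan}} r = 0$ only on a null set since $\|V\|$ has no atoms in this region and the slices are generically $1$-dimensional). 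Hence the set where $T(y,V)$ is a plane transversal to $\partial B_{d(x,y)}(x)$ has full $\|V\|$-measure in $B_s(y_0)$, so it is dense there; since $y_0 \in \text{supp}(\|V\|)$ and $B_s(y_0)$ is open, $T \cap B_s(y_0) \ne \emptyset$, contradicting our assumption.

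The main obstacle is making the "tangent plane is generically transversal to concentric spheres" claim fully rigorous in the varifold setting: one must be careful that the exceptional set $\{y : \nabla^{\text{tan}} r(y) = 0\}$ — where "tan" means with respect to the approximate tangent plane $T_y V$ — really is $\|V\|$-negligible, and not merely that its image under $r$ is Lebesgue-null. This requires the coarea formula for rectifiable varifolds together with the fact that $\|V\|$ is absolutely continuous with respect to $\mathcal{H}^2$ restricted to the rectifiable carrier, so a set on which the slicing function degenerates on a positive-measure set would violate the coarea identity unless that set itself is null. A secondary (but in this lemma minor) point is that we only need to work away from $\partial M$, so the free boundary monotonicity (3.2) is used only to guarantee that densities exist everywhere and hence that $\text{supp}(\|V\|)$ behaves well up to the boundary — but since the statement explicitly excludes $\partial M$ from the density conclusion, the argument reduces cleanly to the interior case and no new boundary analysis is needed beyond what is already recorded in Section 3.
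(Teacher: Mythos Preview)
Your approach is correct and matches the paper's, which simply defers to Lemma~B.2 of \cite{Colding-DeLellis03}; you rightly observe that since the conclusion concerns only points away from $\partial M$, the varifold is interior-stationary there and the free boundary structure plays no essential role.

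One correction to your handling of the ``main obstacle'': the coarea identity is \emph{not} violated by the set $\{\nabla^{\text{tan}} r = 0\}$ having positive $\|V\|$-measure --- a round sphere centered at $x$ shows both sides of the coarea formula can simply vanish --- so your stated resolution does not work as written. The actual argument (as in \cite{Colding-DeLellis03}) uses, in addition, that if $\nabla^{\text{tan}} r \equiv 0$ $\|V\|$-a.e.\ on an open set, then (via the $C^1$-approximation of rectifiable sets) the support of $V$ there lies in a single geodesic sphere $\partial B_t(x)$; but a nontrivial stationary varifold cannot be supported in a strictly convex hypersurface, by the maximum principle already recorded as Lemma~8.4 (or Lemma~B.1 of \cite{Colding-DeLellis03}). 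This is where stationarity genuinely enters and closes the density claim.
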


\begin{proof}
The proof is similar to the proof of Lemma B.2 in \cite{Colding-DeLellis03}.
\end{proof}

The next proposition tells us what we can say about the freely stationary varifold if there exists one replacement.

\begin{proposition}
Let $U \subset \tilde{M}$ be open and $V \in \mathcal{V}(M)$ be a freely stationary varifold in $U$. If there exists a positive function $r:U \to \mathbb{R}$ such that $V$ has a replacement in any annulus An$\in \mathcal{A}_{r(x)}(x)$. 

Then, $V$ is integer rectifiable in $U \cap M$. Moreover, if $x \in $supp$(\|V\|) \cap (\text{int}(M)\cap U)$, then $\theta(x,V) \geq 1$ and any tangent cone to $V$ at $x$ is an integer multiple of a plane; if $x \in $supp$(V) \cap (\partial M \cap U)$, then $\theta(x,V) \geq \frac{1}{2}$ and any tangent cone to $V$ at $x$ is an integer multiple of a half-plane orthogonal to $T_x \partial M$.
\end{proposition}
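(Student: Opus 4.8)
The plan is to follow the proof of Proposition~6.1 of \cite{Colding-DeLellis03}, replacing its interior inputs by the free--boundary monotonicity formula (3.2) and curvature estimates (Lemma 3.5) of Gr\"uter--Jost, the maximum principle (Lemma 8.4), and the boundary regularity theory of \cite{Gruter-Jost86}; when $x\in\partial M$ it is convenient to reflect across $\partial M$ so as to use ordinary monotonicity and Allard theory for the reflected varifold. Throughout one fixes $x\in U\cap M$ and works at scales below $r(x)$ and below the radius supplied by Lemma 8.4, and one uses freely that the density $\theta(x,V)$ exists by (3.2) and its interior analogue.

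\emph{Density lower bound.} Let $x\in\operatorname{supp}\|V\|\cap U\cap M$ and fix a small $t$. For $s\in(0,t)$ choose a replacement $V'_s$ for $V$ in $\mathrm{An}(x,s,t)$; so $V'_s$ agrees with $V$ on $\overline{B_s(x)}\cap M$ and outside $B_t(x)$, $\|V'_s\|(M)=\|V\|(M)$, and $V'_s$ restricted to $\mathrm{An}(x,s,t)\cap M$ is $n_s$ times a smooth stable embedded minimal surface $\Sigma_s$ meeting $\partial M$ orthogonally. I would first invoke Lemma 8.4 to see that $\Sigma_s$ meets $B_r(x)$ for every $r\in(s,t)$: otherwise $V'_s\llcorner B_r(x)$ would be freely stationary in $B_r(x)$, supported in the small ball $\overline{B_s(x)}$, and nonzero since $\|V\|(B_s(x))>0$, contradicting Lemma 8.4. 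Letting $s\downarrow 0$, compactness of mass--bounded varifolds and Lemma 3.5 yield a subsequential limit $\widehat V$ which equals $V$ outside $B_t(x)$, has $\|\widehat V\|(M)=\|V\|(M)$, is a smooth stable embedded minimal surface in $B_t(x)\setminus\{x\}$ meeting $\partial M$ orthogonally, and has $x\in\operatorname{supp}\|\widehat V\|$ by the previous step. By the removable singularity theorem (classical in the interior, and \cite{Gruter-Jost86} at a free boundary point, where finiteness of $\theta(x,\widehat V)$ is used) $\widehat V$ extends to $n\bar\Sigma$, with $\bar\Sigma\ni x$ a smooth embedded minimal surface, meeting $\partial M$ orthogonally if $x\in\partial M$, and $n\ge1$. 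Then $\|V\|(B_t(x)\cap M)=\|\widehat V\|(B_t(x)\cap M)=n\,\|\bar\Sigma\|(B_t(x)\cap M)$, which by monotonicity for $\bar\Sigma$ is at least $n\,\theta(x,\bar\Sigma)\,\pi t^2/C(t)$ with $\theta(x,\bar\Sigma)\ge1$ (interior) or $\ge\tfrac12$ (boundary). Dividing by $t^2$ and sending $t\downarrow 0$ gives $\theta(x,V)\ge1$, resp. $\theta(x,V)\ge\tfrac12$.

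\emph{Tangent cones and rectifiability.} Let $x\in\operatorname{supp}\|V\|$ and $C=\lim_j(\eta_{x,r_j})_\sharp V\in T(x,V)$, where $\eta_{x,r}$ is the dilation by $1/r$ centered at $x$. Blowing up the replacements of $V$ at the scales $r_j$ and applying Lemma 3.5 again (for $C^\infty$--converging metrics), one checks that $C$ admits a replacement in every annulus $\mathrm{An}(0,\sigma,\tau)$ — at every scale, since $C$ is a cone — and that when $x\in\partial M$ the blown--up ambient is a flat half--space in which $C$ is freely stationary. Given such a replacement $C'$ in $\mathrm{An}(0,\sigma,\tau)$: since $C$ is a cone with $\theta(0,C)=\theta(x,V)$, the monotone ratio $r\mapsto\|C'\|(B_r)/r^2$ takes the common value $\theta(x,V)\,\pi$ at $r=\sigma$ and $r=\tau$ — reflecting across the boundary plane in the boundary case — hence is constant on $[\sigma,\tau]$, so $C'$ is conical there. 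A smooth embedded minimal cone is a piece of a plane; in the boundary case orthogonality of the replacement surfaces to the boundary plane forces the link to be a geodesic arc meeting the equator orthogonally, i.e. a single great semicircle, so $C'$ is a piece of a half--plane orthogonal to $T_x\partial M$. Comparing masses identifies the multiplicity as $\theta(x,V)$, so $\theta(x,V)$ is an integer (interior) or a half--integer (boundary); an exhaustion $\sigma\downarrow 0$, $\tau\uparrow\infty$ as in \cite{Colding-DeLellis03}, using $C'=C$ on $B_\sigma$, then identifies $C$ itself with an integer multiple of that plane, resp. half--plane, the multiplicity being positive by the density bound. Finally, since $\theta(\cdot,V)$ is a positive integer on $\operatorname{supp}\|V\|\cap\operatorname{int}(M)$ and $V$ has no piece lying in $\partial M$ (such a piece would have tangent cone a multiple of $T_x\partial M$, excluded above), Allard's rectifiability theorem applied to the reflection of $V$ shows $V$ is rectifiable in $U\cap M$, and the integer densities make it integer rectifiable.

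\emph{Main obstacle.} The substantive new difficulties are all at the free boundary: obtaining \emph{smooth} subsequential convergence of the replacement minimal surfaces up to $\partial M$, so that the limit is again a free--boundary minimal surface and the removable singularity theorem applies; and justifying that equality in the monotonicity formula forces a conical structure at a boundary point — both handled by the Gr\"uter--Jost theory together with reflection across $\partial M$. A further point requiring care is carrying the orthogonality of the replacement surfaces through the blow--up, so that the tangent cone at a boundary point is pinned down as a half--plane \emph{orthogonal} to $T_x\partial M$, and not an arbitrary half--plane or the plane $T_x\partial M$ itself.
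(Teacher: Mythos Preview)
Your proof is correct and the tangent-cone portion essentially matches the paper's, but for the density lower bound you take a genuinely different and more elaborate route.

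\textbf{Density bound.} The paper does \emph{not} pass to a limit of replacements. It fixes a single annulus $\mathrm{An}(x,\rho,2\rho)$, takes one replacement $V'$ there, uses Lemma~8.4 to see $V'\neq 0$ on the annulus, picks any interior point $y\in\Sigma'\setminus\partial M$ (where automatically $\theta(y,V')\ge 1$), and then chains two applications of monotonicity:
\[
\frac{\|V\|(B_{4\rho}(x))}{16\rho^2}=\frac{\|V'\|(B_{4\rho}(x))}{16\rho^2}\ge\frac{\|V'\|(B_{2\rho}(y))}{16\rho^2}\ge\frac{\pi}{4C}.
\]
No family $V'_s$, no diagonal compactness, no removable-singularity theorem. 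Your argument instead sends $s\downarrow 0$, extracts via Lemma~3.5 a smooth stable free-boundary minimal surface in the punctured ball, and invokes the Gr\"uter--Jost Allard theorem to remove the singularity at $x$. This is valid, but it front-loads machinery that the paper postpones to the proof of Theorem~8.3 (where exactly this limiting and removable-singularity step is carried out). The paper's route is shorter and uses only monotonicity; yours gives more (a smooth model surface through $x$), which is not needed here but makes the later regularity proof feel like a repetition of the same idea.

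\textbf{Tangent cones.} Here you and the paper do the same thing: blow up replacements of $V$ in annuli $\mathrm{An}(x,\rho_n/4,3\rho_n/4)$, pass to a limit $C'$, show $C'$ is a cone by comparing the mass ratio with that of $C$ at the inner and outer radii, identify $C'$ in the annulus as a piece of a (half-)plane via Lemma~3.5, and conclude $C'=C$. Two minor stylistic differences: the paper invokes the free-boundary monotonicity formula directly rather than reflecting across $\partial M$; and since $C$ and $C'$ are both cones with vertex $0$ and agree on $\mathcal{B}_{1/4}$, they are equal immediately, so your exhaustion $\sigma\downarrow 0$, $\tau\uparrow\infty$ is unnecessary. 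Your explicit link argument (geodesic arc meeting the equator orthogonally) is a nice way to say what the paper compresses into ``$C'$ is an embedded minimal cone \dots hence supported on a half disk containing the origin.''
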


\begin{proof}
Fix an $x \in $ supp$(\|V\|)\cap(\partial M \cap U)$. Since $V$ is freely stationary, the monotonicity formula (3.2) from \cite{Gruter-Jost86} gives $R>0$ and a constant $C>0$ (depending only on $M$ and $\tilde{M}$) such that for all $y$ in some neighborhood of $\partial M$ in $M$, and $0<\sigma<\rho<R$,
\begin{equation*} 
\frac{\|V\|(B_\sigma(y))}{\sigma^2} \leq C \frac{\|V\|(B_\rho(y))}{\rho^2} .
\end{equation*}
We can assume that $R$ is small enough so that Lemma 8.4 is satisfied. Choose $\rho<r(x)/2$ and so that $4\rho<R$ is smaller than the convexity radius of $\tilde{M}$. Since $2\rho<r(x)$, there is a replacement $V' \in \mathcal{V}(M)$ for $V$ in the annulus An$(x,\rho,2\rho)$. First of all, $V' \neq 0$ on An$(x,\rho,2\rho)$. Otherwise, since $V=V'$ in $B_\rho(x)$, we have $x \in$supp$(\|V'\|)$ and there would be a $\sigma \leq \rho$ such that $V'$ touches $\partial B_\sigma(x)$ from the interior, i.e. $\sigma=\max_{y \in \text{supp}(\|V'\|)} d(y,x)$. This would contradict Lemma 8.4. Therefore, $V'$ is a non-empty smooth surface in An$(x,\rho,2\rho)$ which meets $\partial M$ orthogonally, and so there is some $y \in $An$(x,\rho,2\rho) \setminus \partial M$ with $\theta(V',y) \geq 1$. By the monotonicity formula, and notice that $y \notin \partial M$, 
\begin{equation*} 
\frac{\|V\|(B_{4\rho}(x))}{16 \rho^2} =  \frac{\|V'\|(B_{4\rho}(x))}{16 \rho^2}  \geq  \frac{\|V'\|(B_{2\rho}(y))}{16 \rho^2} \geq \frac{\pi}{4C}. 
\end{equation*}
For $x \in $supp$(\|V\|) \cap $int$(M) \cap U$, the usual monotonicity formula for stationary varifold gives a similar lower bound. Hence, $\theta(x,V)$ is bounded uniformly from below on supp$(\|V\|)$, applying the rectifiability theorem, we conclude that $V$ is rectifiable.

The interior case was handled in \cite{Colding-DeLellis03}, and we know that $V$ is integer rectifiable. So it remains to prove the free boundary case in the proposition. Fix $x \in $supp$(\|V\|)\cap(\partial M \cap U)$, and a sequence $\rho_n \downarrow 0$ such that $V^x_{\rho_n}$ converges weakly to a tangent cone $C \in TV(x,V)$ which is stationary with respect to all variations tangential to $T_x \partial M$. By a change of coordinate, we can assume that $T_x \partial M$ has inward pointing normal $(0,0,1)$.  We will show that $C$ is an integer multiple of a half plane $H$. Since $C$ is freely stationary, $H$ must be orthogonal to $T_x\partial M$, hence contain $(0,0,1)$.

First, we place $V$ by $V'_n$ in the annulus An$(x,\rho_n/4, 3\rho_n/4)$ and set $W'_n=(T^x_{\rho_n})_\sharp V'_n$. After possibly passing to a subsequence, we can assume that $W'_n \to C'$ weakly, where $C'$ is a stationary varifold with respect to tangential variations. By the definition of replacements, we have 
\begin{equation*}
C'=C \hspace{1cm} \text{ in } \mathcal{B}_{\frac{1}{4}} \cup \text{An}(0,\frac{3}{4},1), 
\end{equation*}
and
\begin{equation} 
\|C'\|(\mathcal{B}_\rho)=\|C\|(\mathcal{B}_\rho) \hspace{1cm} \text{ for } \rho \in (0,\frac{1}{4}) \cup (\frac{3}{4},1),
\end{equation}
where $\mathcal{B}_s$ is the ball the radius $s$ in $\mathbb{R}^3$ centered at the origin. Since $C$ is a cone, using (8.1), we have 
\begin{equation*} 
\frac{ \|C'\|(\mathcal{B}_\sigma)}{\sigma^2}=\frac{\|C'\|(\mathcal{B}_\rho)}{\rho^2} \text{ for all } \sigma, \rho \in (0,\frac{1}{4}) \cup (\frac{3}{4},1). 
\end{equation*}
Hence, the stationarity of $C'$ and the monotonicity formula imply that $C'$ is also a cone. By Theorem 3.5, $W'_n$ converges to a stable properly embedded minimal surface in An$(x,1/4,3/4)$, with respect to variation fields in $\chi_{\text{tan}}$. This means that $C' \llcorner $An$(x,1/4,3/4)$ is an embedded minimal cone in the classical sense and hence is supported on a half disk containing the origin. The minimal cone is not the $x$-$y$ plane since each $W'_n$ meets $\partial M$ orthogonally. This forces $C'$ and $C$ to coincide and be an integer multiple of the same half plane perpendicular to $T_x \partial M$.
\end{proof}

We now give the proof of the main result (Theorem 8.3) in this section.

\begin{proof}[Proof of Theorem 8.3] 
Again, the interior regularity is covered in section 6 of \cite{Colding-DeLellis03}, so we only prove the free boundary regularity. 
Fix $x \in $supp$(\|V\|) \cap \partial M \cap U$. Choose $\rho$ small such that $\rho<r(x)/2$ and $2\rho$ is less than $R$ given in Lemma 8.4 and the convexity radius of $\tilde{M}$, by the good replacement property (a), we can find a replacement $V'$ for $V$ in the annulus An$(x,\rho,2\rho)$. Let $\Sigma'$ be the stable minimal surface given by $V'$ in An$(x,\rho,2\rho)$. For any $t \in (\rho, 2\rho)$ and $s \in (0,\rho)$, by the good replacement property (b)(i), we can find a replacement $V''$ of $V'$ in An$(x,s,t)$. Let $\Sigma''$ be the stable minimal surface given by $V''$ in An$(x,s,t)$. 

First, we choose some $t \in (\rho,2\rho)$ such that $\Sigma'$ intersects $\partial B_t(x)$ transversally. Such a $t$ exists because $\Sigma'$ is a smooth surface in the annulus An$(x,\rho,2\rho)$. Next, we show that $\Sigma' \cap $An$(x,t,2\rho)$ can be glued to $\Sigma'' \subset $An$(x,s,t)$ smoothly. It has already been shown that they glue together smooth in the interior in \cite{Colding-DeLellis03}, so it suffices to show that they also glue together smoothly along the free boundary.

Fix a point $y \in \Sigma' \cap \partial B_t(x) \cap \partial M$, and a sufficiently small radius $r$ so that $\Sigma' \cap B_r(y)$ is a half-disk orthogonal to $\partial M$ and $\gamma=\Sigma' \cap \partial B_t(x) \cap B_r(y)$ is a smooth arc perpendicular to $\partial M$. As in \cite{Colding-DeLellis03}, we can assume, without loss of generality, that $B_r(y)$ is the unit ball $\mathcal{B}$ in $\mathbb{R}^3$ centered at origin, $\partial M \cap B_r(y)=\{z_2=0\} \cap \mathcal{B}$, $M \cap B_r(y)=\{z_2 \geq 0\} \cap \mathcal{B}$. Moreover, $\partial B_t(x) \cap B_r(y)=\{z_1=0\} \cap \mathcal{B}$. Suppose $\Sigma' \cap B_r(y)$  is the graph of a smooth function $g(z_1,z_2)$ defined on $\{z_2 \geq 0, z_3=0\} \cap \mathcal{B}$. Hence, $\gamma=\{(0,z_2,g(0,z_2):z_2 \geq 0\}$.

The replacement $V''$ consists of $\Sigma'' \cup (\Sigma' \setminus B_t(x))$ in $B_r(y)$. By Proposition 8.6, using the fact that $V''$ satisfies (c) in Definition 8.2, $T(y,V'')$ consists of a family of (integer multiples) of half-planes orthogonal to $\{z_1=0\} \cap \mathcal{B}$ (in other words, they contain the vector $(0,1,0)$). Since $\Sigma'$ is regular and transversal to $\{z_1=0\}$, each half plane $P \in T(y,V'')$ coincides with the half plane $T_y\Sigma'$ in $\{z_1<0\}$. Therefore, $T(y,V'')=\{T_y\Sigma'\}$. Now,following the argument in \cite{Colding-DeLellis03}, we obtain a function $g''(z_1,z_2) \in C^1(\{z_1 \geq 0,z_2 \geq 0\})$ such that 
\begin{equation*}
\Sigma'' \cap B_r(y)=\{(z_1,z_2,g''(z_1,z_2)):z_1 >0, z_2 \geq 0\},
\end{equation*}
\begin{equation*}
g''(0,z_2)=g'(0,z_2) \text{ and } \nabla g''(0,z_2)=\nabla g'(0,z_2) \hspace{1cm} \text{for all }z_2 \geq 0.
\end{equation*}
Since $\Sigma'$ and $\Sigma''$ meets $\partial M$ orthogonally, we have the free boundary condition $\nabla g'(z_1,0)=(0,0)$ for all $z_1 \leq 0$ and $\nabla g''(z_1,0)=(0,0)$ for all $z_1>0$. By the reflection principle, one obtain a continuous function $G$ defined on the unit disk $\mathcal{D}=\{z_3=0\} \cap \mathcal{B}$ such that $G$ is smooth and satisfies the minimal surface equation on the punctured disk $\mathcal{D} \setminus 0$. Hence by standard interior regularity for second order uniformly elliptic PDE, $G$ is smooth across the origin.

By the maximum principle Lemma 8.4, we have shown that for any $s<\rho$, $\Sigma'$ can be extended to a surface $\Sigma_s$ in An$(x,s,2\rho)$ such that if $s_1<s_2<\rho$, then $\Sigma_{s_1}=\Sigma_{s_2}$ in An$(x,s_2,2\rho)$. Thus, $\Sigma=\cup_{0<s<\rho} \Sigma_s$ is a stable minimal surface with free boundary on $\partial M$ and $\overline{\Sigma} \setminus \Sigma \subset \partial B_{2\rho}(x) \cup \partial M \cup \{x\}$. Next, we show that $V$ coincides with $\Sigma$ in $B_\rho(x) \setminus \{x\}$. Recall that $V=V'$ in $B_\rho(x)$. Fix any $y \in $ supp$(\|V\|) \cap B_\rho(x) \setminus \{x\}$ and set $s=d(x,y)$. Since $\Sigma$ meets $\partial M$ orthogonally, $\mathcal{H}^2(\Sigma \cap \partial M)=0$, so we can assume $y \in $int$(M)$ and $T(y,V)$ consists of a multiple of a plane $\pi$ transversal to $\partial B_s(x)$ (by Lemma 8.5), then we know that $y \in \Sigma$ as in \cite{Colding-DeLellis03}. Therefore, (2) in the definition of replacement implies that $V=\Sigma$ on $B_\rho(x)$. 

It remains to show that $x$ is a removable singularity for $\Sigma$. By Proposition 8.6, every $C \in T(x,V)$ is a multiple of a half-plane orthogonal to $T_x \partial M$. Following \cite{Colding-DeLellis03}, for $\rho$ sufficiently small, there exists natural numbers $N(\rho)$ and $m_i(\rho)$ such that 
\begin{equation*}
\Sigma \cap \text{An}(x,\rho/2,\rho) = \cup_{i=1}^{N(\rho)} m_i(\rho) \Sigma^i_\rho
\end{equation*}
where each $\Sigma^i_\rho$ is a Lipschitz graph over a planar half-annulus, with the Lipschitz constants uniformly bounded independent of $\rho$. Hence, we get $N$ minimal punctured half-disks $\Sigma^i$ with 
\begin{equation*}
\Sigma \cap B_\rho(x) \setminus \{x\} = \cup_{i=1}^N m_i \Sigma^i.
\end{equation*}
By Allard regularity for stationary  varifolds with free bounday (\cite{Gruter-Jost86}), we see that $x$ is a removable singularity for each $\Sigma^i$. Finally, by the Hopf boundary lemma for uniformly elliptic second order PDE, $N$ must be one. This completes the proof of Theorem 8.3.
\end{proof}

To finish the proof of Proposition 4.11, it remains to construct replacements for limits of outer almost minimizing min-max sequences. Let $V$ be as in Proposition 4.10 and fix an annulus An$\in \mathcal{A}_{r(x)}(x)$. Set 
\begin{equation*}
\mathfrak{Is}_j(\textrm{An})=\{ \{\varphi_s\} \in \mathfrak{Is}_{\text{out}}(\textrm{An})  :\mathcal{H}^2(\varphi_s(\Sigma^j) \cap M) \leq \mathcal{H}^2(\Sigma_j \cap M)+\frac{1}{8j} \; \forall s \in [0,1]\}
\end{equation*} 

\begin{lemma}
For each $j$, suppose we have a minimizing sequence $\{\Sigma^{j,k}\}_{k \in \mathbb{N}}$ for the problem $(\Sigma^j,\mathfrak{Is}_j(\textrm{An}))$ that converges weakly to a varifold $V^j$.

Then, $V_j$ is a stable minimal surface in $\textrm{An}$ with free boundary on $\partial M$. Moreover, any $V^*$ which is the limit of a subsequence of $\{V^j\}$ is a replacement for $V$ (in the sense of Definition 8.1).
\end{lemma}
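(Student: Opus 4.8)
The plan is to run the replacement construction of \cite{Colding-DeLellis03}, making the two systematic substitutions that pervade this paper: area is everywhere replaced by area in $M$, and ambient isotopies by the outward/tangential isotopies of Section 3, so that the local minimization step is governed by Theorem 7.3 rather than by \cite{Meeks-Simon-Yau82}. I would carry this out in three stages: (1) for each fixed $j$, show that $V^j$ restricted to $\mathrm{An}$ is a smooth, stable, properly embedded minimal surface meeting $\partial M$ orthogonally; (2) pass to the limit in $j$, using the Gr\"uter--Jost compactness (Lemma 3.5), to get the regularity and stability of $V^*\llcorner(\mathrm{An}\cap M)$; and (3) check the remaining clauses of Definition 8.1 for $V^*$.

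For stage (1) I would first record the consequence of outer almost minimizing. Since $\Sigma^j$ is $\tfrac1j$-outer almost minimizing in $\mathrm{An}$ and every $\{\varphi_s\}\in\mathfrak{Is}_j(\mathrm{An})$ satisfies condition (1) of Definition 3.5 with $\epsilon=\tfrac1j$, condition (2) must fail, so the infimum $\alpha_j$ of the problem $(\Sigma^j,\mathfrak{Is}_j(\mathrm{An}))$ obeys
\begin{equation*}
\mathcal{H}^2(\Sigma^j\cap M)-\tfrac1j\le\alpha_j\le\mathcal{H}^2(\Sigma^j\cap M);
\end{equation*}
since $\mathcal{H}^2(\Sigma^j\cap M)\to m_0(\Lambda)=\|V\|(M)$ and $\mathcal{H}^2(\Sigma^{j,k}\cap M)\to\alpha_j$ with $V^j\in\mathcal V(M)$, we get $\|V^j\|(M)=\alpha_j\to\|V\|(M)$ (here $\|V^j\|(M)=\lim_k\mathcal H^2(\Sigma^{j,k}\cap M)$ because total mass is continuous under weak convergence in the compact manifold $\tilde M$). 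Now fix $p\in\mathrm{An}\cap M$ and a small geodesic ball $B$ about $p$ --- centered at $p$ if $p\in\mathrm{int}(M)$, centered at a nearby point slightly outside $M$ as in the proof of Theorem 7.3 if $p\in\partial M$ --- chosen small enough to be admissible, compactly contained in $\mathrm{An}$, with no $\|V^j\|$-mass on its boundary sphere and $\|V^j\|(B)$ as small as desired. Following \cite{Colding-DeLellis03}, the $+\tfrac1{8j}$ constraint defining $\mathfrak{Is}_j(\mathrm{An})$ is inactive for deformations supported in such a $B$: a near-minimizer for $(\Sigma^{j,k},\mathfrak{Is}_{\text{out}}(B))$ is reached from $\Sigma^{j,k}$ by an outward isotopy supported in $B$ whose intermediate area in $B\cap M$ exceeds $\mathcal H^2(\Sigma^{j,k}\cap B\cap M)$ by an arbitrarily small amount, so, composed with the isotopy producing $\Sigma^{j,k}$, the total area in $M$ stays below $\mathcal H^2(\Sigma^j\cap M)+\tfrac1{8j}$ for $k$ large (using $\alpha_j\le\mathcal H^2(\Sigma^j\cap M)$); hence $\mathcal H^2(\Sigma^{j,k}\cap M)-\inf_{J_B(\Sigma^{j,k})}\mathcal H^2(\,\cdot\,\cap M)\to0$. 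After the routine transversality perturbation, Theorem 7.3 then yields that $V^j\llcorner B$ is a smooth, stable, properly embedded minimal surface meeting $\partial M$ orthogonally. Covering $\mathrm{An}\cap M$ by such balls shows $V^j\llcorner\mathrm{An}$ is a smooth embedded minimal surface with free boundary on $\partial M\cap\mathrm{An}$; and since every sufficiently small $\{\varphi_s\}\in\mathfrak{Is}_{\text{tan}}(\mathrm{An})\subset\mathfrak{Is}_j(\mathrm{An})$ (for $k$ large) gives $\mathcal H^2(\varphi_s(\Sigma^{j,k})\cap M)\to\mathcal H^2(\varphi_s(V^j)\cap M)$, minimality of $\alpha_j$ forces $\mathcal H^2(\varphi_1(V^j)\cap M)\ge\|V^j\|(M)$, i.e.\ $V^j\llcorner\mathrm{An}$ is stable with respect to $\mathfrak{Is}_{\text{tan}}(\mathrm{An})$. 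This is the first assertion of the lemma.

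For stages (2)--(3): the areas $\|V^j\|(\mathrm{An})\le\alpha_j$ are uniformly bounded and the $V^j\llcorner\mathrm{An}$ are stable minimal surfaces with free boundary, so by Lemma 3.5 a subsequence converges smoothly on compact subsets of $\mathrm{An}$ to a stable, properly embedded minimal surface $\Sigma^*$ with free boundary; hence $V^*\llcorner(\mathrm{An}\cap M)$ is an integer multiple of $\Sigma^*$ and is stable with respect to $\mathfrak{Is}_{\text{tan}}(\mathrm{An})$, which is clause (3) of Definition 8.1. For clause (2): each $\Sigma^{j,k}$ agrees with $\Sigma^j$ off $\mathrm{An}$, so $V^j=\Sigma^j\cap M$ on $M\setminus\overline{\mathrm{An}}$, and letting $j\to\infty$ with $\Sigma^j\cap M\rightharpoonup V$ gives $V^*=V$ on $G(M\setminus\mathrm{An})$ (the interior equality being immediate and the equality on $\partial\mathrm{An}$ following as in \cite{Colding-DeLellis03}); and $\|V^*\|(M)=\|V^*\|(\tilde M)=\lim_j\|V^j\|(\tilde M)=\lim_j\|V^j\|(M)=\|V\|(M)$. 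For clause (1): $V^*$ is freely stationary because $\delta_M V^*$ vanishes against fields supported in $\mathrm{An}$ (there $V^*$ is minimal and meets $\partial M$ orthogonally) and against fields supported in $M\setminus\overline{\mathrm{An}}$ (there $V^*=V$), while the only remaining contribution, carried by $\partial\mathrm{An}$, is eliminated exactly as in the corresponding step of \cite{Colding-DeLellis03}, using the smooth convergence $V^j\llcorner\mathrm{An}\to V^*\llcorner\mathrm{An}$ together with $V^j=\Sigma^j\cap M\to V$ off $\mathrm{An}$. Thus $V^*$ is a replacement for $V$.

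The main obstacle is stage (1), specifically isolating the regularity at the \emph{free} boundary: the rest is a transcription of \cite{Colding-DeLellis03} with $\mathcal H^2(\cdot\cap M)$ in place of $\mathcal H^2$, but the statement that $V^j\llcorner B$ is a \emph{smooth} minimal surface meeting $\partial M$ orthogonally near a free-boundary point is precisely what forced the development of Section 7 --- local $\gamma$-reductions allowing boundary surgeries, together with Jost's regularity theorem for genus-zero minimizers with partially free boundary --- culminating in Theorem 7.3. Applying it also requires the bookkeeping that the $+\tfrac1{8j}$ constraint remains inactive on small balls and that these balls be taken admissible, which near $\partial M$ means recentering them slightly outside $M$.
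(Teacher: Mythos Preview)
Your proposal is correct and follows essentially the same route as the paper's proof. The paper's argument is very terse: it fixes $j=1$, observes that stationarity and stability of $V^j$ in $\mathrm{An}$ are immediate from the minimizing property, and says regularity follows exactly as in \cite{Colding-DeLellis03} once one substitutes Theorem 3.5, Theorem 7.3, and the analog of their Lemma 7.6 (the rescaling/squeezing ``Fact'' (Cl) showing the $+\tfrac{1}{8j}$ constraint is inactive on small balls); the second assertion is deferred entirely to Proposition 7.5 of \cite{Colding-DeLellis03}. You have unpacked exactly these ingredients --- the inactive-constraint step, the local application of Theorem 7.3 on admissible balls (with the recentering trick near $\partial M$, which the paper leaves implicit here), the Gr\"uter--Jost compactness for the limit in $j$, and the clause-by-clause verification of Definition 8.1 --- so there is no substantive difference.
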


\begin{proof}
The proof of the second assertion is exactly as that in Proposition 7.5 in \cite{Colding-DeLellis03}. So we only prove the first assertion here. Without loss of generality, we assume that $j=1$, and we write $V'$, $\Sigma^k$ and $\Sigma$ in place of $V^j$, $\Sigma^{j,k}$ and $\Sigma^j$ respectively. Clearly $V'$ is stationary and stable in $\textrm{An}$, by its minimizing property. Thus, we only need to prove regularity. The proof follows exactly as in \cite{Colding-DeLellis03} except that we are using Theorem 3.5, Theorem 7.3 and the following result (see Lemma 7.6 in \cite{Colding-DeLellis03}), which can be proved by a rescaling argument as in \cite{Colding-DeLellis03}:

\textit{Fact:} Let $x \in \textrm{An}$, and assume that $\{\Sigma^k\}$ is minimizing for the problem $(\Sigma,\mathfrak{Is}_1(\textrm{An}))$. Then, there exists $\epsilon>0$ such that for $k$ sufficiently large, the following holds:
\begin{itemize}
	\item[(Cl)] For any $\{\varphi_s\} \in \mathfrak{Is}_{\text{out}}(B_\epsilon(x))$ with $\mathcal{H}^2(\varphi_1(\Sigma^k) \cap M) \leq \mathcal{H}^2(\Sigma^k \cap M)$, there exists another isotopy $\{\phi_s\} \in \mathfrak{Is}_{\text{out}}(B_\epsilon(x))$ such that $\varphi_1=\phi_1$ and 
	\begin{equation*}
		\mathcal{H}^2(\phi_s(\Sigma^k) \cap M) \leq \mathcal{H}^2(\Sigma^k \cap M) + \frac{1}{8} \text{ for all } s\in [0,1].
	\end{equation*}
\end{itemize}
Moreover, $\epsilon$ can be chosen so that (Cl) holds for any sequence $\{\tilde{\Sigma}^k\}$ which is minimizing for the problem $(\Sigma,\mathfrak{Is}_1(\textrm{An}))$ and with $\Sigma^j=\tilde{\Sigma}^j$ on $\tilde{M} \setminus \overline{B}_\epsilon(x)$.

\end{proof}

We end this section with a proof of Proposition 4.11.

\begin{proof}[Proof of Proposition 4.11]
We will apply Theorem 8.3. From Lemma 8.7 above, We know that  in every annulus An $\in \mathcal{A}_{r(x)}(x)$ there is a replacement $V^*$ for $V$. We need to show that $V$ satisfies (a), (b) and (c) in Definition 8.2, with $r=r_1$. 

\end{proof}


\section{Genus bound}

In this section, we observe that a result in De Lellis-Pellandini \cite{DeLellis-Pellandini10} which controls the topological type of the minimal surface constructed by min-max arguments continue to hold in the case of free boundary. (We will assume that all surfaces in a sweepout is orientable as in De Lellis-Pellandini \cite{DeLellis-Pellandini10}.) The proof is exactly the same as in De Lellis-Pellandini \cite{DeLellis-Pellandini10}. One only has to note that a compact smooth surface $\Gamma$ has genus $g$ if and only if the image of the map $r:H_1(\Gamma ; \mathbb{Z}) \to H_1(\Gamma,\partial \Gamma ; \mathbb{Z})$ is $\mathbb{Z}^{2g}$ when $\Gamma$ is orientable, or the image is $\mathbb{Z}^{g-1} \times \mathbb{Z}_2$ if $\Gamma$ is non-orientable. The lifting lemma (Proposition 2.1 in De Lellis-Pellandini \cite{DeLellis-Pellandini10}) is still valid and hence the proof goes through.

\begin{theorem}
Let $\Sigma^j=\Sigma^j_{t_j} \cap M$ be a sequence which is almost minimizing in sufficiently small annuli which intersects $\partial M$ transversally for all $j$ and $V$ be the varifold limit of   $\Sigma^j$ as $j \to \infty$. Write $V=\sum_{i=1}^N n_i \Gamma^i$ where $\Gamma^i$ are connected components of $\Sigma$, counted without multiplicity and $n_i$ are positive. Let $\mathcal{O}$ be the set of those $\Gamma^i$ which are orientable and $\mathcal{N}$ be those which are non-orientable. Then 
\begin{equation}
\sum_{\Gamma^i \in \mathcal{O}} \mathfrak{g}(\Gamma^i) + \frac{1}{2} \sum_{\Gamma^i \in \mathcal{N}} (\mathfrak{g}(\Gamma^i)-1) \leq \mathfrak{g}_0 = \liminf_{j \uparrow \infty} \liminf_{\tau \to t_j} \mathfrak{g}(\Sigma^j_\tau).
\end{equation}
where $\mathfrak{g}(\Gamma)$ denotes the genus of a smooth compact surface $\Gamma$ (possibly with boundary).
\end{theorem}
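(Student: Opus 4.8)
The plan is to transplant the argument of De~Lellis--Pellandini \cite{DeLellis-Pellandini10} to the free boundary setting, verifying that its two ingredients --- the homological characterization of genus and the lifting lemma --- survive intact. Recall the characterization: a compact smooth surface $\Gamma$ (with possibly empty boundary) has genus $g$ exactly when the image of the natural map $r:H_1(\Gamma;\mathbb{Z})\to H_1(\Gamma,\partial\Gamma;\mathbb{Z})$ is $\mathbb{Z}^{2g}$ if $\Gamma$ is orientable and $\mathbb{Z}^{g-1}\times\mathbb{Z}_2$ if $\Gamma$ is non-orientable. Since every slice $\Sigma^j_\tau$, hence $\Sigma^j_\tau\cap M$, is assumed orientable, it suffices to exhibit, for infinitely many $j$ and for $\tau$ arbitrarily close to $t_j$, a collection of $m:=2\sum_{\Gamma^i\in\mathcal{O}}\mathfrak{g}(\Gamma^i)+\sum_{\Gamma^i\in\mathcal{N}}(\mathfrak{g}(\Gamma^i)-1)$ embedded curves in $\Sigma^j_\tau\cap M$ whose classes are linearly independent in $H_1(\Sigma^j_\tau\cap M,\partial(\Sigma^j_\tau\cap M);\mathbb{Z})$; this forces $2\,\mathfrak{g}(\Sigma^j_\tau\cap M)\ge m$, and taking $\liminf_{\tau\to t_j}$ and then $\liminf_{j\uparrow\infty}$ yields the asserted inequality.

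On each component $\Gamma^i$ I would first select, away from the finite set $P$ of points near which the convergence $\Sigma^j\to V$ degenerates and away from $\overline{\Gamma^i}\setminus\Gamma^i$, a system of embedded closed curves $\gamma^i_1,\dots,\gamma^i_{m_i}$ representing classes in $H_1(\Gamma^i;\mathbb{Z})$ whose images under $r$ form a basis of $\mathrm{Im}(r)$ --- so $m_i=2\,\mathfrak{g}(\Gamma^i)$ if $\Gamma^i\in\mathcal{O}$ and $m_i=\mathfrak{g}(\Gamma^i)-1$ if $\Gamma^i\in\mathcal{N}$ (one of the latter carrying the torsion class), with $\sum_i m_i=m$. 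By Proposition 4.10 and the regularity theory of Sections 7--8, the min-max sequence converges to $V=\sum_i n_i\Gamma^i$ smoothly with multiplicity $n_i$ over every compact subset of $\Gamma^i\setminus P$, and uniformly up to $\partial M$ since the $\Sigma^j_\tau$ meet $\partial M$ transversally; hence over a tubular neighbourhood of each $\gamma^i_\ell$ the surface $\Sigma^j_\tau$ (for $j$ large and $\tau$ near $t_j$) consists of $n_i$ nearly parallel graphical sheets. I would then apply the lifting lemma (Proposition 2.1 of \cite{DeLellis-Pellandini10}) verbatim: it produces, for each $\gamma^i_\ell$, an embedded curve $\tilde\gamma^i_\ell\subset\Sigma^j_\tau\cap M$ supported in this graphical region together with the necks near $P$, and it guarantees that linear independence of the classes $r(\gamma^i_\ell)$ in $\bigoplus_i H_1(\Gamma^i,\partial\Gamma^i;\mathbb{Z})$ implies linear independence of the classes $[\tilde\gamma^i_\ell]$ in $H_1(\Sigma^j_\tau\cap M,\partial(\Sigma^j_\tau\cap M);\mathbb{Z})$. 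This furnishes the required $m$ curves.

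The one point that requires checking, and the place I expect the only real (if mild) work to lie, is that the lifting lemma of \cite{DeLellis-Pellandini10} still holds near the free boundary. Its proof is local: over an interior annulus the lifted curve is obtained by descending onto one graphical sheet and, where sheets must be switched to close a cycle, routing through a neck near a point of $P$; near $\partial M$ the relevant model is a half-annulus and the descent onto a sheet is again a trivial graphical construction because the $\gamma^i_\ell$ were chosen disjoint from $\overline{\Gamma^i}\setminus\Gamma^i$. The independence step uses only the intersection pairing between $H_1$ and $H_1(\cdot,\partial\cdot)$ together with the multiplicity structure of the sheets, and this algebraic input does not see whether the boundary lies on $\partial M$ or is empty. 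Lastly the non-orientable bookkeeping --- the factor $\frac{1}{2}$ and the shift by $-1$ --- carries over unchanged: a non-orientable $\Gamma^i$ occurs as a limit only with even multiplicity, the associated part of $\Sigma^j_\tau\cap M$ is forced to be (a piece of) its orientation double cover, which has genus $\mathfrak{g}(\Gamma^i)-1$, and one extracts that many independent lifted cycles from it exactly as in \cite{DeLellis-Pellandini10}. No analytic input beyond the smooth convergence already established is needed.
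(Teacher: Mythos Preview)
Your proposal is correct and follows essentially the same approach as the paper: the paper's proof consists of the observation that the De~Lellis--Pellandini argument goes through verbatim once one replaces the closed-surface genus characterization by the relative homology characterization via $r:H_1(\Gamma;\mathbb{Z})\to H_1(\Gamma,\partial\Gamma;\mathbb{Z})$ and notes that the lifting lemma (Proposition~2.1 of \cite{DeLellis-Pellandini10}) remains valid. Your write-up is in fact considerably more detailed than the paper's one-paragraph sketch, but the strategy and the two key ingredients are identical.
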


On the other hand, we note that it is impossible to get a similar bound on the connectivity  (i.e. number of free boundary components) of the minimal surface. Corollary 1.3 is then a direct consequence of Theorem 1.1 and Theorem 9.1 is the following. Noting that there is no closed minimal surface in $\mathbb{R}^3$.

\begin{corollary}
Any smooth compact domain in $\mathbb{R}^3$ contains a non-trivial properly embedded minimal surface $\Sigma$ with non-empty boundary which is a free boundary solution and such that 
\begin{itemize}
	\item[(i)] either $\Sigma$ is an orientable genus zero surface, i.e. a disk with holes;
	\item[(ii)] or $\Sigma$ is a non-orientable genus one surface, i.e. a M\"{o}bius band with holes.
\end{itemize}
\end{corollary}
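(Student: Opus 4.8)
The plan is to deduce Corollary 9.2 from Theorem 1.1 and Theorem 9.1, using only two elementary features of Euclidean space: $\mathbb{R}^3$ contains no closed minimal surface, and a compact domain $M\subset\mathbb{R}^3$ carries a genus zero sweepout.

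First I would apply Theorem 1.1 to $M$, embedded in a closed $\tilde M$ as in Section 3. Since the coordinate functions of $\mathbb{R}^3$ restrict to harmonic functions on any minimal surface, the maximum principle (equivalently, the convex hull property) excludes closed embedded minimal surfaces in $\mathbb{R}^3$; hence alternative (ii) of Theorem 1.1 cannot occur, and we obtain a properly embedded minimal surface $\Sigma\subset M$ with $\partial\Sigma\subset\partial M$ meeting $\partial M$ orthogonally. Moreover $\partial\Sigma\neq\emptyset$, since a minimal $\Sigma$ with empty boundary would be a closed minimal surface in the interior of $M\subset\mathbb{R}^3$, which we have just ruled out. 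Non-triviality is automatic: by Proposition 4.6 the saturated collection can be chosen with $m_0(\Lambda)>0$, so $\|V\|(M)=m_0(\Lambda)>0$ and $\Sigma$ is not a point.

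Second, I would pin down the topological invariant. Viewing $M$ inside a large closed round ball, or inside $\mathbb{S}^3$, one sweeps out the ambient manifold by round $2$-spheres (the level sets of a distance function, a generalized family in the sense of Definition 4.1 beginning and ending at points) and restricts each slice to $M$. Every slice $\Sigma_t\cap M$ lies on a round sphere, hence is a compact planar surface with boundary and has genus zero. Taking $\Lambda$ to be the saturation of this family, Definition 9.1 gives filling genus $h(M)=0$; equivalently, since every operation used in the construction (the deformations defining $\Lambda$, the tightening of Section 5, the perturbation Lemma 4.9) does not raise the genus of slices, the quantity $\mathfrak g_0$ appearing in Theorem 9.1 satisfies $\mathfrak g_0=0$.

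Finally I would feed this into Theorem 9.1. Writing the limiting varifold as $V=\sum_{i=1}^N n_i\Gamma^i$ with the $\Gamma^i$ connected (Theorem 4.7), each $\Gamma^i$ is a smooth compact properly embedded minimal surface that is either closed or meets $\partial M$ orthogonally along its free boundary; the closed case is impossible in $\mathbb{R}^3$, so every $\Gamma^i$ has non-empty free boundary. With $\mathfrak g_0=0$, the inequality of Theorem 9.1 becomes
\begin{equation*}
\sum_{\Gamma^i\in\mathcal O}\mathfrak g(\Gamma^i)+\frac12\sum_{\Gamma^i\in\mathcal N}\bigl(\mathfrak g(\Gamma^i)-1\bigr)\leq 0,
\end{equation*}
and since a non-orientable surface has genus $\geq 1$, every summand is non-negative; hence each orientable component has genus $0$ and each non-orientable component has genus exactly $1$. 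Choosing $\Sigma$ to be any single component $\Gamma^i$ then gives precisely alternative (i) (an orientable disk with holes) or alternative (ii) (a non-orientable M\"{o}bius band with holes). The one point needing care — and the only real obstacle — is checking that the genus zero spherical sweepout, restricted to $M$, fits the hypotheses of Theorem 9.1 with $\mathfrak g_0=0$: that it generates an admissible saturated collection with $m_0(\Lambda)>0$ and that the genus bound survives passage to a continuous minimizing sequence via Lemma 4.9. The geometric ingredients (absence of closed minimal surfaces, orthogonality at the free boundary) are immediate.
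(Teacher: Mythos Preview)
Your proposal is correct and follows essentially the same approach as the paper: the paper's proof is the single sentence ``This follows from the observation that any such domain can be swept out by surfaces with genus zero,'' together with the earlier remark that there is no closed minimal surface in $\mathbb{R}^3$, and you have simply unpacked these two observations (via Theorem 1.1, Proposition 4.6, Theorem 9.1, and the filling genus $h(M)=0$ from embedding into $\mathbb{S}^3$) in the expected way. Your closing caveat about Lemma 4.9 preserving $\mathfrak g_0=0$ is a fair point of care, but the paper does not address it either and implicitly folds it into the machinery of Section 9 and Theorem 1.2.
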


This follows from the observation that any such domain can be swept out by surfaces with genus zero. In fact, we can generalize the result to arbitrary 3-manifold. Recall that for any orientable closed 3-manifold $M$, the \emph{Heegaard genus} of $M$ is the smallest integer $g$ such that $M=\Sigma_1 \cup \Sigma_2$, where $\Sigma_1 \cap \Sigma_2$ is an orientable surface of genus $g$ and each $\Sigma_i$, $i=1,2$, is a handlebody of genus $g$. For manifolds with boundary, we make the following definition.

\begin{definition}
Let $M$ be a compact 3-manifold with boundary. We define the \emph{filling genus} of $M$ to be the smallest integer $g$ such that there exists a smooth embedding of $M$ into a closed orientable 3-manifold $\tilde{M}$ with Heegaard genus $g$.
\end{definition}

Since any closed 3-manifold with Heegaard genus $g$ has a non-trivial sweepout by surfaces with genus as most $g$. The min-max construction on the saturation of such a sweepout together with the genus bound (Theorem 9.1) above give Theorem 1.2 as a corollary.

\begin{corollary}[Theorem 1.2]
Any smooth compact orientable 3-manifold $M$ with boundary $\partial M$ with filling genus $g$ contains a nonempty properly embedded smooth minimal surface $\Sigma$ with free boundary and the genus of $\Sigma$ is at most $g$ if it is orientable; and at most $2g+1$ if it is not orientable.
\end{corollary}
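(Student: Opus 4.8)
The plan is to run the min-max machinery of Theorem 4.7 on a saturated family whose slices are topologically controlled, and then feed the resulting minimal surface into the genus bound of Theorem 9.1. By Definition 9.1 I would fix a smooth embedding $M \hookrightarrow \tilde{M}$ into a closed orientable $3$-manifold $\tilde M$ of Heegaard genus $g$ and extend the metric of $M$ over $\tilde M$; since Heegaard genus is purely topological this is harmless. A genus-$g$ Heegaard splitting gives a ``standard'' sweepout $\{\Sigma_t = \tilde f^{-1}(t)\}_{t\in[0,1]}$ of $\tilde M$ by the level sets of a self-indexing Morse function $\tilde f$, whose regular slices are closed orientable surfaces of genus at most $g$ (this is exactly the sweepout invoked in the discussion preceding Definition 9.1). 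This $\{\Sigma_t\}$ is a sweepout in the sense of Definition 4.1; let $\Lambda$ be its saturation. Running the argument of Proposition 4.6 with this particular Morse function $\tilde f$ gives $m_0(\Lambda) > 0$.

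Next I would apply Theorem 4.7 to $\Lambda$, producing a min-max sequence $\{\Sigma^n_{t_n}\cap M\}$ converging as varifolds to $V=\sum_{i=1}^k n_i\Gamma_i$ with $\|V\|(M)=m_0(\Lambda)>0$, each $\Gamma_i$ a smooth compact properly embedded minimal surface that is either closed or meets $\partial M$ orthogonally along $\partial\Gamma_i$; after a small perturbation by outward isotopies (which changes neither the saturated class nor the varifold limit) we may assume each $\Sigma^n_t$ meets $\partial M$ transversally, as required to apply Theorem 9.1. The key bookkeeping point is that every sweepout in $\Lambda$ is obtained from $\{\tilde f^{-1}(t)\}$ by applying ambient diffeomorphisms of $\tilde M$ slicewise (the isotopies in $\mathfrak{Is}_{\mathrm{out}}$ are, in particular, ambient diffeomorphisms), so every regular slice $\Sigma^n_\tau$ is diffeomorphic to some $\tilde f^{-1}(t)$ and hence is an orientable closed surface of genus at most $g$. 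Consequently $\mathfrak{g}_0 = \liminf_n \liminf_{\tau\to t_n}\mathfrak{g}(\Sigma^n_\tau) \le g$, and Theorem 9.1 yields
\[
\sum_{\Gamma^i\in\mathcal{O}}\mathfrak{g}(\Gamma^i) + \tfrac12\sum_{\Gamma^i\in\mathcal{N}}\bigl(\mathfrak{g}(\Gamma^i)-1\bigr) \;\le\; \mathfrak{g}_0 \;\le\; g .
\]

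To extract the stated bound, note that an orientable component has $\mathfrak{g}(\Gamma^i)\ge 0$ and a non-orientable (possibly bordered) component has $\mathfrak{g}(\Gamma^i)\ge 1$, so every summand on the left is nonnegative. By Theorem 1.1, $M$ contains the minimal surface; in the free-boundary alternative we take $\Sigma$ to be a component $\Gamma_{i_0}$ with $\partial\Gamma_{i_0}\neq\emptyset$. If $\Sigma$ is orientable the displayed inequality forces $\mathfrak{g}(\Sigma)\le g$; if $\Sigma$ is non-orientable it forces $\tfrac12(\mathfrak{g}(\Sigma)-1)\le g$, i.e. $\mathfrak{g}(\Sigma)\le 2g+1$. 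This gives the corollary, with Corollary 1.3 the special case $g=0$ (and $\tilde M=S^3$) once one recalls there is no closed minimal surface in $\mathbb{R}^3$.

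The main obstacle is not in this assembly but inside Theorem 9.1, whose proof must be verified to survive the presence of a free boundary: one replaces the topological criterion ``$\mathfrak{g}(\Gamma)=g$ iff the image of $H_1(\Gamma;\mathbb{Z})\to H_1(\Gamma,\partial\Gamma;\mathbb{Z})$ has the prescribed form'' and must check that the curve-lifting argument of De Lellis--Pellandini still produces the required $1$-cycles on the nearby sweepout slices when those slices (and the limit) have boundary on $\partial M$ and meet it transversally. The excerpt asserts this goes through essentially verbatim, together with the lifting lemma. The remaining points --- that the genus-$\le g$ property of slices is genuinely preserved under passing to the saturation (which holds because the defining deformations act by ambient diffeomorphisms), and that the transversality-to-$\partial M$ hypothesis can be arranged without raising $\mathfrak{g}_0$ or altering $m_0(\Lambda)$ --- are routine once Theorem 9.1 is in hand.
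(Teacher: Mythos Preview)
Your proposal is correct and follows essentially the same route as the paper: embed $M$ into a closed $\tilde M$ of Heegaard genus $g$, sweep out by level sets of a Morse function adapted to the Heegaard splitting so that regular slices have genus at most $g$, run the min-max construction of Theorem~4.7 on the saturation, and then read off the genus bounds from Theorem~9.1. The paper's own proof is a one-line remark to this effect; you have simply filled in the bookkeeping (why $\mathfrak g_0\le g$ survives saturation, and how the inequality in Theorem~9.1 unpacks into the two cases $\mathfrak g(\Sigma)\le g$ and $\mathfrak g(\Sigma)\le 2g+1$), which is fine. One small point: there is no need to single out ``the free-boundary alternative'' when choosing $\Sigma$---the genus estimate from Theorem~9.1 applies to every component $\Gamma_i$ regardless of whether $\partial\Gamma_i$ is empty, so the stated bound holds for the minimal surface produced by Theorem~1.1 in either case.
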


        

\appendix


\section{Proof of Lemma 3.1}

We now present a proof of Lemma 3.1.

\begin{proof}
Since $V_i$ is a weakly convergent sequence, $\|V_i\|(\tilde{M})$ is bounded. Hence $\|V_i \llcorner_M \|(\tilde{M})$ is also bounded. After passing to a subsequence (we use the same index $i$ for our convenience), $V_i \llcorner_M $ converges weakly to some $W \in \mathcal{V}(\tilde{M})$. Since $\mathcal{V}(M)$ is closed in $\mathcal{V}(\tilde{M})$, we have $W \in \mathcal{V}(M)$. We will show that $W=V \llcorner_M$. This clearly proves our lemma, since any subsequence of $V_i \llcorner_M $ has another subsequence converging weakly to $V \llcorner_M$.

First, we claim that $W \leq V \llcorner_M$, i.e. $W(f) \leq V \llcorner_M(f)$ for any nonnegative continuous function $f$ on $G(\tilde{M})$. Since $G(M)$ is a closed subset of $G(\tilde{M})$, there exists a decreasing sequence of continuous functions $\phi_k$ on $G(\tilde{M})$ with $0 \leq \phi_k \leq 1$, $\phi_k = 1$ on $G(M)$ and $\phi_k$ converges pointwise to the characteristic function $\chi_{G(M)}$ of $G(M)$. As $V_i$ converges weakly to $V$ in $\tilde{M}$, we have for each $k$, 
\begin{equation}
\lim_{i \to \infty} \int_{G(\tilde{M})} \phi_k f dV_i = \int_{G(\tilde{M})} \phi_k f dV. 
\end{equation}
Since $\phi_k$ and $f$ are nonnegative, for each $i$ and $k$,
\begin{equation} 
\int_{G(\tilde{M})} \phi_k f dV_i \llcorner_M \leq  \int_{G(\tilde{M})} \phi_k f dV_i 
\end{equation}
Holding $k$ fixed and taking $i \to \infty$ in (A.2), by (A.1), we have
\begin{equation*}
\int_{G(\tilde{M})} \phi_k f dW \leq \int_{G(\tilde{M})} \phi_k f dV. 
\end{equation*}
Since $W$ is supported in $M$ and $\phi_k=1$ on $M$ for each $k$, we get
\begin{equation*}
W(f)=\int_{G(M)} f dW=\int_{G(\tilde{M})} \phi_k f dW \leq \int_{G(\tilde{M})} \phi_k f dV. 
\end{equation*}
Now, since $\phi_k f$ converges pointwise to $\chi_{G(M)} f$ monotonically, by the monotone convergence theorem, 
\begin{equation*} 
\lim_{k \to \infty}  \int_{G(\tilde{M})} \phi_k f dV=\int_{G(\tilde{M})} \chi_M f dV=\int_{G(M)} f dV=V\llcorner_M(f). 
\end{equation*}
This proves our claim that $W \leq V \llcorner_M$.

Now, we want to show that $W=V \llcorner_M$. Since we already have the inequality $W \leq V \llcorner_M$. It suffices to show that $\|W\|(M)=\|V\llcorner_M\|(M)$. It follows from the assumption that $\|V_i\|(M)$ converges to $\|V\|(M)$ as $i \to \infty$,
\begin{equation*} 
\|W\|(M)=\lim_{i \to \infty} \|V_i \llcorner_M\|(M)=\lim_{i \to \infty} \|V_i\|(M) =\|V\|(M)= \|V\llcorner_M\|(M).
\end{equation*}
Therefore, the proof of Lemma 3.1 is completed.
\end{proof}


\section{The perturbation lemma}

We end this section by proving a technical perturbation lemma (Lemma 4.9). First, we prove a lemma which says that if we use a ``small'' isotopy to deform a surface, its area would not increase by too much.

\begin{lemma} Let $V \in \mathcal{V}(\tilde{M})$ be a varifold in $\tilde{M}$. Suppose we have a smooth vector field $X \in \chi_{\text{out}}$, and let $\{\varphi_s\}_{s \in [0,1]}$ be the outward isotopy in $\mathfrak{Is}_{\text{out}}$ generated by $X$. Then,
\begin{equation*}
\|(\varphi_1)_\sharp V \|(M) \leq \|V\|(M) \; e^{\|X\|_{C^1}}.
\end{equation*}
Here, $\|X\|_{C^1}$ denotes the $C^1$-norm of the vector field $X$ as a smooth map $X: \tilde{M} \to T\tilde{M}$.
\end{lemma}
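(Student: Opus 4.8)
The plan is to reduce the estimate to a pointwise bound on the tangential Jacobian of $\varphi_1$, exploiting the fact that an outward isotopy can push mass out of $M$ but never into it. First I would invoke the change-of-variables formula for pushforwards of varifolds: since $\varphi_1$ is a diffeomorphism of $\tilde M$,
\[
\|(\varphi_1)_\sharp V\|(M)=\int_{G(\tilde M)}\mathbf 1_M(\varphi_1(x))\,J_S\varphi_1(x)\,dV(x,S),
\]
where $J_S\varphi_1(x)$ denotes the Jacobian of the linear map $d(\varphi_1)_x|_S\colon S\to d(\varphi_1)_x(S)$ computed with the metric $g$. Because $\{\varphi_s\}\in\mathfrak{Is}_{\text{out}}$, we have $M\subset\varphi_1(M)$, equivalently $\varphi_1^{-1}(M)\subset M$; hence $\varphi_1(x)\in M$ forces $x\in M$, so $\mathbf 1_M(\varphi_1(x))\le\mathbf 1_M(x)$ for every $x\in\tilde M$. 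Substituting this pointwise inequality yields
\[
\|(\varphi_1)_\sharp V\|(M)\le\int_{G(M)}J_S\varphi_1(x)\,dV(x,S).
\]

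Next I would bound $J_S\varphi_1(x)$ uniformly. Writing $S_s:=d(\varphi_s)_x(S)$ for the transported $2$-plane, the flow equation $\partial_s\varphi_s=X\circ\varphi_s$ gives the standard evolution identity $\partial_s\log J_S\varphi_s(x)=(\mathrm{div}_{S_s}X)(\varphi_s(x))$, obtained exactly as in the first variation computation by differentiating the area element of $d(\varphi_s)_x(S)$ (cf. \cite{Simon83}). Since $|\mathrm{div}_P X(y)|\le\|X\|_{C^1}$ for every point $y$ and every $2$-plane $P\subset T_y\tilde M$, integrating in $s$ over $[0,1]$ and using $J_S\varphi_0\equiv 1$ gives $J_S\varphi_1(x)\le e^{\|X\|_{C^1}}$. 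Combining with the previous display,
\[
\|(\varphi_1)_\sharp V\|(M)\le e^{\|X\|_{C^1}}\int_{G(M)}dV(x,S)=e^{\|X\|_{C^1}}\|V\|(M),
\]
which is the asserted bound.

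The routine-but-delicate point is the evolution identity for $J_S\varphi_s$ together with the justification of differentiating under the integral sign; both are classical in geometric measure theory and I would simply cite the first variation computation. The only place where the hypothesis $X\in\chi_{\text{out}}$ is genuinely needed is the inclusion $\varphi_1^{-1}(M)\subset M$: without it $\varphi_1$ could transport mass lying just outside $\partial M$ into $M$, and no estimate of this form could hold.
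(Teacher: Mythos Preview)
Your proof is correct and follows essentially the same route as the paper: the paper applies Gr\"onwall to the total mass $f(s)=\|(\varphi_s)_\sharp V\|(\tilde M)$ via the first variation bound $|f'(s)|\le\|X\|_{C^1}f(s)$ and then invokes the outward property through the identity $\|(\varphi_1)_\sharp V\|(M)=\|(\varphi_1)_\sharp(V\llcorner_M)\|(M)$, whereas you use the outward property first (as $\mathbf 1_M\circ\varphi_1\le\mathbf 1_M$) and then run the identical Gr\"onwall argument pointwise on $\log J_S\varphi_s$. The two arguments are the same computation organized in a different order.
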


\begin{proof}
By the first variation formula of area in $\tilde{M}$,
\begin{equation*}
|\delta V(X)| \leq \left| \int_{G(\tilde{M})} \text{div}_\pi X(x) \; dV(x,\pi) \right| \leq \|V\|(\tilde{M}) \|X\|_{C^1}.
\end{equation*}
Hence, if we let $f(s)=\|(\varphi_s)_\sharp V\|(\tilde{M})$, then the inequality above is equivalent to
\begin{equation*}
|f'(s)| \leq f(s) \|X\|_{C^1}.
\end{equation*}
Integrating $s$ from $0$ to $1$, we get $f(1) \leq f(0) \;  e^{\|X\|_{C^1}}$. In other words,
\begin{equation*}
\|(\varphi_1)_\sharp V\|(\tilde{M}) \leq \|V\|(\tilde{M}) \; e^{\|X\|_{C^1}}.
\end{equation*}
Now, using this and the assumption that $X$ is an outward vector field,  
\begin{equation*}
\|(\varphi_1)_\sharp V\|(M) \leq \|(\varphi_1)_\sharp(V \llcorner_M)\|(\tilde{M}) \leq \|V \llcorner_M\|(\tilde{M}) \; e^{\|X\|_{C^1}}=\|V\|(M) \; e^{\|X\|_{C^1}},
\end{equation*}
where the first inequality holds because $\|(\varphi_1)_\sharp V \|(M)=\| (\varphi_1)_\sharp (V \llcorner_M) \| (M)$. This proves Lemma B.1.
\end{proof}

We are now ready to prove the perturbation lemma.

\begin{lemma}
Given any sweepout $\{\Sigma_t\}_{t \in [0,1]} \in \Lambda$, and any $\epsilon >0$, there exists a continuous sweepout $\{\Sigma_t'\}_{t \in [0,1]} \in \Lambda$ such that 
\begin{equation*}
\mathcal{F}(\{\Sigma'_t\}) \leq \mathcal{F}(\{\Sigma_t\})+ \epsilon
\end{equation*} 
\end{lemma}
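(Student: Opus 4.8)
The plan is to build $\{\Sigma'_t\}$ from $\{\Sigma_t\}$ by pushing each slice a little way \emph{out} of $M$ near $\partial M$, by an amount $a(t)$ that varies smoothly with $t$ and is tuned so that no slice ever has positive‑area overlap with $\partial M$. Set up a collar $N\cong\partial M\times(-\eta,\eta)$ of $\partial M$ in $\tilde M$ with a coordinate $z$ for which $M\cap N=\{z\le 0\}$, extend $z$ to a smooth function on $\tilde M$, and let $Y=\chi(z)\,\partial_z$, where $\chi$ depends only on $z$, is supported in $N$, and equals $1$ on $\{-\eta_0\le z\le 0\}$ for a small $\eta_0>0$ to be fixed. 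Then $Y\in\chi_{\text{out}}$, and for $a\in[0,\eta_0)$ the time‑$1$ flow $\varphi^a_1$ of $aY$ lies in $\mathfrak{Is}_{\text{out}}$, pushes $M$ outward, and satisfies $(\varphi^a_1)^{-1}(\partial M)=\{z=-a\}$ near $\partial M$. For any smooth $a\colon[0,1]\to[0,\eta_0)$ put $\Sigma'_t:=\varphi^{a(t)}_1(\Sigma_t)$; realised by the isotopies $\{\varphi^{a(t)}_s\}_{s\in[0,1]}\in\mathfrak{Is}_{\text{out}}$, this is one of the admissible deformations of Section~4, so $\{\Sigma'_t\}\in\Lambda$ and is again a sweepout (Definition~4.1). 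Moreover, Lemma~B.1 applied to the field $a(t)Y$ gives
\begin{equation*}
\mathcal F(\{\Sigma'_t\})=\sup_{t}\mathcal H^2(\Sigma'_t\cap M)\le\sup_t\mathcal H^2(\Sigma_t\cap M)\,e^{a(t)\|Y\|_{C^1}}\le\mathcal F(\{\Sigma_t\})\,e^{\eta_0\|Y\|_{C^1}},
\end{equation*}
so $\mathcal F(\{\Sigma'_t\})\le\mathcal F(\{\Sigma_t\})+\epsilon$ once $\eta_0$ is chosen small (using $\mathcal F(\{\Sigma_t\})<\infty$). The whole problem is therefore reduced to choosing $a(\cdot)$ so that $\{\Sigma'_t\}$ is \emph{continuous}.

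\textbf{Continuity as a slicing condition.} Since $\{\Sigma'_t\}$ is a smooth family it converges weakly as varifolds, so (using 2.6.2(c) of \cite{Allard72}) $t\mapsto\mathcal H^2(\Sigma'_t\cap M)$ is upper semicontinuous while $t\mapsto\mathcal H^2(\Sigma'_t\cap\operatorname{int}(M))$ is lower semicontinuous; as these differ by $\mathcal H^2(\Sigma'_t\cap\partial M)$, the function $t\mapsto\mathcal H^2(\Sigma'_t\cap M)$ is continuous at every $t$ with $\mathcal H^2(\Sigma'_t\cap\partial M)=0$ (the finite point set $P$ of Definition~4.1 carries no $\mathcal H^2$‑mass and is harmless). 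Because $\chi\equiv 1$ near $\partial M$ we have $\Sigma'_t\cap\partial M=\varphi^{a(t)}_1\big(\Sigma_t\cap\{z=-a(t)\}\big)$, so the task becomes: find a smooth $a\colon[0,1]\to(0,\eta_0)$ with
\begin{equation*}
\mathcal H^2\big(\Sigma_t\cap\{z=-a(t)\}\big)=0\qquad\text{for every }t\in[0,1].
\end{equation*}

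\textbf{The selection, and the main obstacle.} For each fixed $t$ the forbidden set $B_t:=\{a\in(0,\eta_0):\mathcal H^2(\Sigma_t\cap\{z=-a\})>0\}$ is at most countable, because the slices $\Sigma_t\cap\{z=-a\}$ are pairwise disjoint subsets of the finite‑area surface $\Sigma_t$; so its complement is dense in $(0,\eta_0)$. A Sard‑type argument for $z$ restricted to the (away from $P$, smooth) total space $\bigcup_t\{t\}\times\Sigma_t\subset[0,1]\times\tilde M$ of the sweepout then shows that the full forbidden locus $\{(t,a):a\in B_t\}$ is of first category with suitably controlled fibres, and from this one constructs a continuous (hence, after a careful interpolation on a fine partition of $[0,1]$, smooth) function $a(\cdot)$ avoiding it. I expect this last step to be the crux of the proof: one cannot get away with a \emph{constant} push, since a single level $\{z=-a\}$ may carry positive‑area pieces of $\Sigma_t$ for an infinite --- even uncountable --- set of parameters $t$, so one is forced to dodge a genuinely $t$‑dependent and a priori irregular family of heights, continuously in $t$. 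Granting such an $a(\cdot)$, the two paragraphs above combine to give a continuous sweepout $\{\Sigma'_t\}\in\Lambda$ with $\mathcal F(\{\Sigma'_t\})\le\mathcal F(\{\Sigma_t\})+\epsilon$, as required.
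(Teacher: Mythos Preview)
Your setup and reduction are correct: the area estimate via Lemma~B.1 is fine, and reducing continuity to the slicing condition $\mathcal H^2(\Sigma_t\cap\{z=-a(t)\})=0$ is exactly the right move. The gap is precisely where you flag it. You propose to select a smooth $a\colon[0,1]\to(0,\eta_0)$ that dodges, for every $t$, the countable set $B_t$ of ``flat'' levels, and you invoke a Sard/Baire heuristic to produce it. But the forbidden locus $\mathcal B=\{(t,a):a\in B_t\}$ need not be closed, and having countable (hence meager, measure-zero) $t$-slices does \emph{not} in general yield a continuous---let alone smooth---section of the complement; the ``careful interpolation on a fine partition'' you allude to requires, at minimum, local openness of the good set, which you have not established. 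Without further structure on how the flat pieces of $\Sigma_t$ vary with $t$, this step is genuinely missing, not just terse.

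The paper fills exactly this hole, but with a different deformation: instead of a spatially \emph{uniform} push by $a(t)Y$, it uses a spatially varying field $X_t(x)=\phi_t(x)\chi(x)\nabla d(x)$, where $\phi_t$ is chosen so that $d+\phi_t$ is a Morse function on $\Sigma_t\cap U_\theta(\partial M)$ for all but finitely many $t$ (and has a single birth/death point at the exceptional times). This is arranged by the \emph{parametric Morse theorem} after partitioning $[0,1]$ into subintervals on which the surfaces $\Sigma_t\cap U_i$ are mutually diffeomorphic (having first pushed the singular set $P$ away from $\partial M$). Morse-ness forces $\Sigma'_t\cap\partial M$ to be at most a curve plus finitely many points, so $\mathcal H^2(\Sigma'_t\cap\partial M)=0$ automatically, with no selection problem at all. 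The extra degree of freedom---allowing the push to depend on $x$ and not just $t$---is what makes the argument close; your scalar $a(t)$ is too rigid, and that rigidity is what forces you into an unresolved selection.
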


\begin{proof}
By 2.6.2(d) of Allard \cite{Allard72} and Lemma 3.1 in this paper, it suffices to construct $\{\Sigma'_t\}$ such that 
\begin{equation}
\mathcal{H}^2(\Sigma'_t \cap \partial M)=0
\end{equation}
for all $t \in [0,1]$. This would imply that $\{\Sigma'_t\}$ is a continuous sweepout. One might hope to perturb $\Sigma_t$ using an outward isotopy so that it is transversal to $\partial M$ for all $t$. However, it is impossible, in general, to find a \emph{smooth} family of outward isotopies such that all the perturbed surfaces are transversal to $\partial M$. On the other hand, we could find one so that all but finitely many $\Sigma_t$ is transversal to the boundary $\partial M$ after perturbation, and for those finitely many exceptions,  there are only finitely many points at which the perturbed $\Sigma_t$ meets the boundary $\partial M$ non-transversally. This would certainly imply (B.1).

For $\theta>0$ sufficiently small (to be chosen later), the signed distance function $d=d(\cdot,\partial M)$ is a smooth function on the open tubular neighborhood $U_\theta(\partial M)=\{x \in \tilde{M}:  |d(x,\partial M)| < \theta\}$ of $\partial M$. (We take $d$ to be nonnegative for points in $M$.) Let $\nu$ be the inward pointing unit normal to $\partial M$ with respect to $M$ (This is globally defined on $\partial M$ even when $M$ is not orientable). For $\theta>0$ small enough, we have a diffeomorphism
\begin{equation*}
g(x,s)=\text{exp}_x(s \nu(x)): \partial M \times (-\theta,\theta) \to U_\theta(\partial M).
\end{equation*}

We will need the following \emph{parametric Morse theorem}: If $f_t: N \to \mathbb{R}$ is a one-parameter family of smooth functions on a compact (possibly with boundary) manifold $N$ with $t \in [0,1]$, and $f_0$, $f_1$ are Morse functions, then there exists a smooth one-parameter family $F_t: N \to \mathbb{R}$ such that $F_0=f_0$, $F_1=f_1$, and $F$ is uniformly close to $f$ in the $C^k$-topology on functions $N \times [0,1] \to \mathbb{R}$. Furthermore, $F_t$ is Morse at all but finitely many $t$, at a non-Morse time, the function has only one degenerate critical point, corresponding to the birth/death transition.

The relationship between Morse functions and transversality can be seen as follows. Let $\Sigma$ be a closed surface in $\tilde{M}$. Consider the function $d$ restricted on $\Sigma \cap U_\theta(\partial M)$, if $d:\Sigma \cap U_\theta(\partial M) \to \mathbb{R}$ is a Morse function, then $\Sigma$ intersects the level sets $\{x \in M: d=c\}$ transversally except possibly at the critical points of $d$, which is only a finite set. In particular, we have $\mathcal{H}^2(\Sigma \cap \partial M)=0$. If $d$ is not a Morse function, we approximate it by a Morse function $d_\phi$ in $C^k$ norm in $\Sigma \cap U_\theta(\partial M)$. Without loss of generality, we can also assume that $d_\phi \leq d$ everywhere. Let $\phi \leq 0$ be a smooth extension of the function $d_\phi-d$ to $U_\theta(\partial M)$ (by Whitney's extension theorem), in such a way that $\|\phi\|_{C^k}$ is very small. Then, if we consider the outward vector field
\begin{equation*}
X(x)=\chi(x) \phi(x) \nabla d(x)
\end{equation*}
where $0 \leq \chi \leq 1$ is a smooth cutoff function on $\tilde{M}$ such that $\chi=1$ on $U_{\theta/2}(\partial M)$, $\chi=0$ outside $U_\theta(\partial M)$ and $|\nabla \chi| \leq 4/\theta$. Let $\{\varphi_s\}_{s \in [0,1]}$ be the outward isotopy generated by $X$. Then it is clear that $\mathcal{H}^2(\varphi_1(\Sigma) \cap \partial M)=0$ according to the discussion above. 

The perturbation can be carried out for each $\Sigma_t$ in the sweepout. Hence, we want to choose $\phi_t \leq 0$ on $U_\theta(\partial M)$ which are small in $C^k$ norm such that, after perturbation, $\Sigma_t$ intersects the boundary $\partial M$ at a set of $\mathcal{H}^2$- measure zero. The only complication is that we have to choose $\phi_t$ which depends smoothly on $t$. This is where we need the parametric Morse theorem.

By Lemma B.1, for every $C>0$ and $\epsilon>0$, there exists $\delta=\delta(C,\epsilon)>0$ sufficiently small (for example, take $\delta < \ln (1+\epsilon/2C)$) such that whenever $\|X\|_{C^1} < \delta$, 
\begin{equation}
\|(\varphi_1)_\sharp V\|(M) \leq \|V\|(M)+\frac{\epsilon}{2}
\end{equation}
for all $V \in \mathcal{V}(\tilde{M})$ with $\|V\|(M) \leq C$.

Fix a sweepout $\{\Sigma_t\}_{t \in [0,1]} \in \Lambda$ and $\epsilon>0$ as in the hypothesis, since $\{\Sigma_t\}$ is a continuous family of varifolds in $\tilde{M}$, there exists a constant $C>0$ such that $\mathcal{H}^2(\Sigma_t \cap M) \leq \mathcal{H}^2(\Sigma_t) \leq C$ for all $t \in [0,1]$. For this $\epsilon$ and $C$, choose $\delta>0$ so that (B.2) holds. Moreover, assume $\theta>0$ is always sufficiently small so that $d$ is a smooth function on $U_\theta(\partial M)$ and $g$ is a diffeomorphism.

Now, we would like to apply the parametric Morse theorem to the family of smooth functions $d_t:\Sigma_t \cap \overline{U}_\theta(\partial M) \to \mathbb{R}$. However, there is a little technical difficulty because $\Sigma_t \cap \overline{U}_\theta(\partial M)$ are not all diffeomorphic to each other. Recall that in the definition of a sweepout, we have two finite sets, $T \subset [0,1]$ and $P \subset \tilde{M}$, at which singularities occur. First of all, we argue that we can assume $P \cap \overline{U}_\theta(\partial M) = \emptyset$. 

Suppose $P \cap \partial M \neq \emptyset$. Since $P$ is just a finite set, there exists $0<\rho < \theta/2$ such that $\partial M_\rho \cap P = \emptyset$. Define an outward vector field $X \in C^\infty_{\text{out}}(\tilde{M},T\tilde{M})$ by 
\begin{equation*}
X(x)=-\rho \chi(x) \nabla d (x),
\end{equation*}
where $0\leq \chi \leq 1$ is a smooth cutoff function on $\tilde{M}$ such that $\chi=1$ on $U_\rho(\partial M)$, $\chi=0$ outside $U_\theta(\partial M)$ and $|\nabla \chi| \leq 4/\theta$. Let $K>0$ be a constant (independent of $\theta$) so that $|\nabla^2 d| \leq K$ on $\overline{U}_\theta(\partial M)$. Hence, for $\rho>0$ sufficiently small (depending on $\theta$ and $K$), we can make $\|X\|_{C^1} < \delta$. Let $\{\varphi_s\}_{s \in [0,1]}$ be the isotopy generated by $X$, then by (B.2), 
\begin{equation*}
\mathcal{H}^2(\varphi_1(\Sigma_t) \cap M) \leq \mathcal{H}^2(\Sigma_t \cap M)+\frac{\epsilon}{2}.
\end{equation*}
Moreover, $\varphi_1(\partial M_\rho)=\partial M$. Therefore, replacing $\{\Sigma_t\}$ by $\{\varphi_1(\Sigma_t)\}$ if necessary, we can assume that $P \cap \partial M=\emptyset$.

As $P$ is a finite set, we can further assume that $\theta$ is small enough so that $P \cap \overline{U}_\theta (\partial M)=\emptyset$. By the definition of a sweepout, there exists a partition $0=t_0<t_1<\cdots<t_k=1$ of the interval $[0,1]$ such that on each subinterval $[t_{i-1},t_i]$, $i=1,\ldots,k$, there exists an open neighborhood $U_i$ of $\partial M$ contained in $U_\theta(\partial M)$ such that $\Sigma_t \cap U_i$ are all diffeomorphic for $t \in [t_{i-1},t_i]$. 

Now, for any $\delta'>0$ (to be specified later), since Morse functions are dense in $C^k$-topology, for each $i=0,1,\ldots,k$, we can approximate the smooth function $d_{t_i}:\Sigma_{t_i} \cap U_i \to \mathbb{R}$ by a Morse function $\hat{d}_{t_i}:\Sigma_{t_i} \cap U_i \to \mathbb{R}$, with $\hat{d}_{t_i} \leq d_{t_i}$ and $\|d_{t_i}-\hat{d}_{t_i}\|_{C^1}< \delta'$. Let $\phi_{t_i} \leq 0$ be a smooth extension of $\hat{d}_{t_i} - d_{t_i}$ to $U_i$ such that $\|\phi_{t_i}\|_{C^1} \leq \delta'$ for all $i$. Using the parametric Morse Theorem, we can construct a smooth family of smooth functions $\phi_t \leq 0$ on $U_i$, $t \in [t_{i-1},t_i]$ such that $d_t+\phi_t $ is a Morse function on $\Sigma_t \cap U_i$ except for finitely many $t$'s there is only one degenerate critical point. We can also assume that $\phi_t$ is uniformly small in $C^1$-norm on $U_\theta(\partial M)$. Putting these intervals together, we have a piecewise smooth one-parameter family of smooth functions $\phi_t$, defined on $U$ for some neighborhood $U$ of $\partial M$, such that $d_t+\phi_t$ are Morse except at finitely many times. Since Morse functions form an open set in the space of all smooth functions in the $C^\infty$-topology, and the family is Morse at each $t_i$, we can smooth out the family, keeping it Morse except at finitely many times away from $t_i$'s. Assume $\theta$ is chosen small enough such that $U_\theta(\partial M) \subset U$. In summary, we have a smooth one-parameter family of smooth non-positive functions $\{\phi_t\}$ on $U$ such that $\|\phi_t\|_{C^1} < \delta'$. For each $t \in [0,1]$, let $\{\varphi_t(s)\}_{s \in [0,1]}$ be the outward isotopy generated by the outward vector field $X_t \in C^\infty_{\text{out}}(\tilde{M},T\tilde{M})$ defined as 
\begin{equation*}
X_t(x)=\phi_t(x) \chi(x) \nabla d(x), 
\end{equation*} 
where $0 \leq \chi \leq 1$ is a smooth cutoff function on $\tilde{M}$ so that $\chi=1$ on $U_{\theta/2}(\partial M)$, $\chi=0$ outside $U_\theta(\partial M)$ and $|\nabla \chi| \leq 2/\theta$. Let $\{\varphi_t(s)\}_{s \in [0,1]}$ be the isotopy in $\mathfrak{Is}^{\text{out}}$ generated by $X_t$. Take $\Sigma'_t=\varphi_t(1)(\Sigma_t)$, then $\{\Sigma'_t\} \in \Lambda$. We claim that $\{\Sigma'_t\}$ is the competitor we want.

First of all, by choosing $\delta'>0$ sufficiently small, we can make $\|X_t\|_{C^1} < \delta$ for all $t \in [0,1]$. Hence, we have from (B.2) that
\begin{equation*}
\mathcal{H}^2(\Sigma'_t \cap M) \leq \mathcal{H}^2(\Sigma_t \cap M) + \frac{\epsilon}{2}.
\end{equation*}
Moreover, since $d:\Sigma_t' \cap U_\theta(\partial M) \to \mathbb{R}$ agrees with $d_t+\phi_t$ for all $t \in [0,1]$, by our construction, we have $\Sigma'_t \cap \partial M$ consists of at most finitely many points for all $t \in [0,1]$. Therefore, we have 
\begin{equation*}
\mathcal{H}^2(\Sigma'_t \cap \partial M)=0. 
\end{equation*}
This completes the proof of Lemma B.2.
\end{proof}

\bibliographystyle{amsplain}
\bibliography{references}

\end{document}